\newtheorem{theorem}{Theorem}[section]
\newtheorem{proposition}[theorem]{Proposition}
\newtheorem{corollary}[theorem]{Corollary}
\theoremstyle{definition}
\newtheorem{definition}[theorem]{Definition}
\theoremstyle{remark}
\newtheorem{remark}[theorem]{Remark}
\numberwithin{equation}{section}
\begin{document}

\title[On some properties of a universal sigma-finite measure associated with 
a remarkable class of submartingales]
{On some properties of a universal sigma-finite measure associated with 
a remarkable class of submartingales}
\author[J. Najnudel]{Joseph Najnudel}
\address{Institut f\"ur Mathematik, Universit\"at Z\"urich, Winterthurerstrasse 190,
8057-Z\"urich, Switzerland}
\email{\href{mailto:joseph.najnudel@math.uzh.ch}{joseph.najnudel@math.uzh.ch}}

\author[A. Nikeghbali]{Ashkan Nikeghbali}
\email{\href{mailto:ashkan.nikeghbali@math.uzh.ch}{ashkan.nikeghbali@math.uzh.ch}}

\date{\today}

\begin{abstract}
In a previous work, we associated with any submartingale $X$ of class $(\Sigma)$, defined on a filtered probability
space $(\Omega, \mathcal{F}, \mathbb{P}, (\mathcal{F}_t)_{t \geq 0})$ satisfying some technical conditions, 
a $\sigma$-finite measure $\mathcal{Q}$ on $(\Omega, \mathcal{F})$, such that for all 
$t \geq 0$, and for all events $\Lambda_t
 \in \mathcal{F}_t$: $$ \mathcal{Q} [\Lambda_t, g\leq t] = \mathbb{E}_{\mathbb{P}} [\mathds{1}_{\Lambda_t} X_t],$$
where $g$ is the last hitting time of zero by the process $X$. The measure $\mathcal{Q}$,
which was previously studied in particular cases related with Brownian penalisations and problems
 in mathematical finance, enjoys some remarkable properties which are detailed in this paper. 
Most of these properties are related to a certain class of nonnegative martingales, defined 
as the local densities (with respect to $\mathbb{P}$) of the finite measures which are absolutely continuous
with respect to $\mathcal{Q}$. In particular, we obtain a decomposition of any nonnegative supermartingale 
into three parts, one of them being a martingale in the class described above. If the initial supermartingale
is a martingale, this decomposition
corresponds to the 
decomposition of finite measures on $(\Omega, \mathcal{F})$ as sums of three measures, such that 
the first one
is absolutely continuous with respect to $\mathbb{P}$, the second one is absolutely continuous with respect to 
$\mathcal{Q}$ and the third one is singular with respect to $\mathbb{P}$ and $\mathcal{Q}$. From the properties of the measure $\mathcal{Q}$, we also deduce a universal class of penalisation results of the probability measure $\mathbb{P}$ with a large class of functionals: the measure $\mathcal{Q}$ appears to be the unifying object in these problems. 
\end{abstract}

\maketitle
\section*{Notation}
In this paper, $(\Omega,\mathcal{F},(\mathcal{F}_t)_{t \geq 0},\mathbb{P})$ will denote a filtered probability space. $\mathcal{C}(\mathbb{R}_+,\mathbb{R})$ is the space of continuous functions from $\mathbb{R}_+$ to $\mathbb{R}$. $\mathcal{D}(\mathbb{R}_+,\mathbb{R})$ is the space of c\`adl\`ag functions from $\mathbb{R}_+$ to $\mathbb{R}$. If  $Y$ is a random variable, we denote indifferently by $\mathbb{P}[Y]$ or by $\mathbb{E}_{\mathbb P}[Y]$
the expectation of $X$ with respect to $\mathbb{P}$. If $(A_t)_{t\geq0}$ is an increasing process, as usual, the increasing limit of $A_t$, when $t\to\infty$, is denoted $A_{\infty}$.

\section{Introduction}
In a paper by Madan, Roynette and Yor \cite{MRY}, and a set of lectures by Yor \cite{BeYor}, the
authors prove that if $(M_t)_{t \geq 0}$ is a continuous nonnegative local martingale  defined
 on a filtered probability
 space $(\Omega,\mathcal{F},(\mathcal{F}_t)_{t \geq 0},\mathbb{P})$ satisfying the usual assumptions, and 
such that  $\lim_{t\to\infty}M_t=0$, then for any $K \geq 0$:
\begin{equation}\label{BS}
K \, \mathbb{P} \left[ F_t \, \mathds{1}_{g_K \leq t} \right] = \mathbb{P} \left[F_t (K-M_t)_+ \right],	
	\end{equation} where $g_K=\sup\{t \geq 0: M_t=K\}$. The formula (\ref{BS}), which represents
 the price of a European put option in terms of the probability distribution of some last passage time 
gives, in a particular case, a positive answer to the following problem, also stated in \cite{BeYor}
and \cite{MRY}: for which submartingales $X$ can we find
 a $\sigma$-finite measure $\mathcal{Q}$ and the end of an optional set $g$ such that
\begin{equation}\label{masterequation2}
\mathcal{Q} \left[ F_t \, \mathds{1}_{g \leq t} \right] = \mathbb{P} \left[F_t X_t \right]?
\end{equation}
This problem was previously encountered in the literature in different situations.  
In \cite{AY1}, Az\'ema and Yor prove that for any continuous and uniformly integrable 
 martingale $M$, (\ref{masterequation2}) holds for $X_t=|M_t|$, $\mathcal{Q}=|M_\infty|.\mathbb{P}$ and
 $g=\sup\{t \geq 0: M_t=0\}$, or equivalently $$|M_t|=\mathbb{P}[|M_\infty|\mathds{1}_{g\leq t}|
\mathcal{F}_t].$$ Here again the measure $\mathcal{Q}$ is finite. A particular case
 where the measure $\mathcal{Q}$ is not finite was obtained by  Najnudel, Roynette and Yor
in their study of Brownian penalisations (see \cite{NRY}). For example, they prove the existence of
 the measure $\mathcal{Q}$ when $X_t=|W_t|$ is the absolute value of the standard
 Brownian Motion. In this case, the measure $\mathcal{Q}$ is not finite but only $\sigma$-finite
 and is singular with respect to the Wiener measure: it satisfies $\mathcal{Q}(g=\infty)=0$, where
 $g=\sup\{t \geq 0:\;W_t=0\}$. Now, the existence of $\mathcal{Q}$ in all the examples cited above
is a consequence of a general result proved by the authors of the present paper in \cite{NN1}.
 The relevant class of submartingales is called $(\Sigma)$, it was first introduced by Yor in \cite{Y}
and some of its main properties were further studied in \cite{N}. Let us recall its definition.
\begin{definition}[\cite{N,Y}]
Let $(\Omega,\mathcal{F},(\mathcal{F}_t)_{t \geq 0},\mathbb{P})$ be a filtered probability space. A 
nonnegative submartingale (resp. local submartingale) $(X_t)_{t \geq 0}$ is of class $(\Sigma)$, iff it can 
be decomposed as
$X_t = N_t + A_t$ where $(N_t)_{t \geq 0}$ and $(A_t)_{t \geq 0}$ are $(\mathcal{F}_t)_{t \geq 0}$-adapted 
processes satisfying the following assumptions:
\begin{itemize}
\item $(N_t)_{t \geq 0}$ is a c\`adl\`ag martingale (resp. local martingale);
\item $(A_t)_{t \geq 0}$ is a continuous increasing process, with $A_0 = 0$;
\item The measure $(dA_t)$ is carried by the set $\{t \geq 0, X_t = 0 \}$.
\end{itemize}
\end{definition}
\noindent
 One notes that a  process of class $(\Sigma)$ is "almost" a martingale: outside the zeros of $X$, the 
process $A$ does not increase. In fact many processes one often encounters fall into this
 class, e.g. $X_t=|M_t|$ where $(M_t)_{t \geq 0}$ is a continuous local martingale,  $X_t=(M_t-K)_+$ where 
$(M_t)_{t \geq 0}$ is a c\`adl\`ag local martingale with only positive 
jumps and $K\in\mathbb{R}$ is a constant, $X_t=S_t-M_t$ where $(M_t)_{t \geq 0}$ is a local martingale
 with only negative jumps and $S_t=\sup_{u\leq t} M_u$. Other remarkable families of examples consist of a large class of recurrent diffusions on natural scale (such as some powers of Bessel processes of dimension $\delta\in(0,2)$, see \cite{NN1}) or of a function of a symmetric L\'evy process; in these cases, $A_t$  is the local time of the diffusion process or of the L\'evy process.

Note that in the case where $A_{\infty} = \infty$, $\mathbb{P}$-almost surely (this condition holds
if  $(X_t)_{t \geq 0}$ is a reflected Brownian motion), and
$(\Omega,\mathcal{F},(\mathcal{F}_t)_{t \geq 0},\mathbb{P})$ 
satisfies the usual conditions, the measure $\mathcal{Q}$ cannot exist: otherwise, we would have for all 
$t \geq 0$,
$$ \mathbb{P} [X_t] = \mathbb{P} [X_t  \mathds{1}_{g > t}] = \mathcal{Q} [g \leq t, g > t] = 0,$$
since the event $\{g > t\}$ is $\mathbb{P}$-almost sure, and then in $\mathcal{F}_t$. Hence, 
$X$ would be indistinguishable from zero, which contradicts the fact that $A_{\infty} = \infty$. 
This issue explains why usual conditions are not assumed in the sequel of this paper. On the other hand, 
we also encounter some problems if we do not complete the probability spaces: for example, 
if $\Omega = \mathcal{C}(\mathbb{R}_+, \mathbb{R})$, $\mathcal{F}_t$ is the $\sigma$-algebra
generated by the canonical process $X$ up to time $t$, and $\mathbb{P}$ is Wiener measure,
 then there does not exist a c\`adl\`ag  and $(\mathcal{F}_t)_{t \geq 0}$-adapted version of the local time
which is well-defined everywhere (and not only $\mathbb{P}$-almost surely). In order to avoid also 
this technical problem, we assume that the filtration satisfies some particular conditions, intermediate between 
the right-continuity and the usual conditions. These assumptions, called "natural conditions",
 were first introduced by Bichteler
in \cite{B}, and then rediscovered  in \cite{NN2} (there they are also called N-usual conditions) where it is proved that most of the properties which generally hold
under the usual conditions remain valid under the natural conditions (for example, existence of c\`adl\`ag versions
of martingales, the Doob-Meyer decomposition, the d\'ebut theorem, etc.). Let us recall here the definition.
\begin{definition} \label{natural}
A filtered probability space $(\Omega,\mathcal{F}, (\mathcal{F}_t)_{t \geq 0}, \mathbb{P})$,
  satisfies the natural conditions iff the two following assumptions hold:
\begin{itemize}
\item The filtration $(\mathcal{F}_t)_{t \geq 0}$ is right-continuous;
\item For all $t \geq 0$, and for every $\mathbb{P}$-negligible set $A \in \mathcal{F}_t$, all
the subsets of $A$ are contained in $\mathcal{F}_0$.
\end{itemize}
\end{definition}
\noindent
This definition is slightly different from the definitions given in \cite{B} and \cite{NN2} but one can 
easily check that it is equivalent. 
The natural enlargement of a filtered probability space can be defined by using the following proposition:
\begin{proposition}[\cite{NN2}]
Let $(\Omega, \mathcal{F}, (\mathcal{F}_t)_{t \geq 0}, \mathbb{P})$ be a filtered probability space.
There exists a unique filtered probability space $(\Omega, \widetilde{\mathcal{F}}, 
 (\widetilde{\mathcal{F}}_t)_{t \geq 0}, \widetilde{\mathbb{P}})$ (with the same set $\Omega$), such that:
\begin{itemize}
\item For all $t \geq 0$, $\widetilde{\mathcal{F}}_t$ contains $\mathcal{F}_t$, $\widetilde{\mathcal{F}}$ 
contains $\mathcal{F}$ and $\widetilde{\mathbb{P}}$ is an extension of $\mathbb{P}$;
\item The space $(\Omega, \widetilde{\mathcal{F}}, 
 (\widetilde{\mathcal{F}}_t)_{t \geq 0}, \widetilde{\mathbb{P}})$ satisfies the natural conditions;
\item For any filtered probability space $(\Omega, \mathcal{F}', (\mathcal{F}'_t)_{t \geq 0}, \mathbb{P}')$
satisfying the two items above, $\mathcal{F}'_t$ contains $\widetilde{\mathcal{F}}_t$ for all $t \geq 0$,
$\mathcal{F}'$ contains $\widetilde{\mathcal{F}}$ and $\mathbb{P}'$ is an extension of 
$\widetilde{\mathbb{P}}$.  
\end{itemize}
\noindent
The space $(\Omega, \widetilde{\mathcal{F}}, 
 (\widetilde{\mathcal{F}}_t)_{t \geq 0}, \widetilde{\mathbb{P}})$ is called the natural
 enlargement of $(\Omega, \mathcal{F},  (\mathcal{F}_t)_{t \geq 0},  \mathbb{P})$.
  \end{proposition}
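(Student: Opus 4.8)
The plan is to construct $(\widetilde{\mathcal{F}}_t)_{t\geq 0}$, $\widetilde{\mathcal{F}}$ and $\widetilde{\mathbb{P}}$ explicitly as the smallest enlargement compatible with the first two required properties, and then to read off uniqueness from minimality. The decisive preliminary step is to isolate the right class of negligible sets. I would let $\mathcal{N}$ be the $\sigma$-ideal of subsets of $\Omega$ generated by the $\mathbb{P}$-negligible sets belonging to $\bigcup_{s\geq 0}\mathcal{F}_s$; concretely, $B\in\mathcal{N}$ iff $B\subseteq\bigcup_{n}A_n$ for some sequence $(A_n)$ with $A_n\in\mathcal{F}_{s_n}$ and $\mathbb{P}(A_n)=0$. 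It is essential to use the negligible sets of the finite-time $\sigma$-algebras, and \emph{not} all the $\mathbb{P}$-negligible sets of $\mathcal{F}$: the latter choice would produce the usual augmentation and would violate the minimality requirement. By construction $\mathcal{N}$ is stable under subsets and under countable unions, which is precisely what the verifications below require.

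Granting $\mathcal{N}$, I would set $\mathcal{G}_s:=\sigma(\mathcal{F}_s\cup\mathcal{N})$ and then define $\widetilde{\mathcal{F}}_t:=\bigcap_{s>t}\mathcal{G}_s$ and $\widetilde{\mathcal{F}}:=\sigma(\mathcal{F}\cup\mathcal{N})$. Because $\mathcal{N}$ is a $\sigma$-ideal one has the convenient description
\[
\mathcal{G}_s=\{E\subseteq\Omega : E\triangle F\in\mathcal{N}\ \text{for some}\ F\in\mathcal{F}_s\},
\]
and I would define $\widetilde{\mathbb{P}}(E):=\mathbb{P}(F)$ whenever $E\triangle F\in\mathcal{N}$ with $F\in\mathcal{F}$. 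The first routine point is that this is well defined: if $E\triangle F$ and $E\triangle F'$ lie in $\mathcal{N}$ then $F\triangle F'\in\mathcal{N}\cap\mathcal{F}$, hence $\mathbb{P}(F\triangle F')=0$; countable additivity of $\widetilde{\mathbb{P}}$ and the fact that it extends $\mathbb{P}$ are then immediate. Right-continuity is purely formal, since $\bigcap_{u>t}\widetilde{\mathcal{F}}_u=\bigcap_{u>t}\bigcap_{s>u}\mathcal{G}_s=\bigcap_{s>t}\mathcal{G}_s=\widetilde{\mathcal{F}}_t$, and the inclusions $\mathcal{F}_t\subseteq\widetilde{\mathcal{F}}_t$, $\mathcal{F}\subseteq\widetilde{\mathcal{F}}$ are clear.

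The negligible-set half of the natural conditions is where the choice of $\mathcal{N}$ pays off. Given $A\in\widetilde{\mathcal{F}}_t$ with $\widetilde{\mathbb{P}}(A)=0$, I would fix any $s>t$, use $A\in\mathcal{G}_s$ to write $A\triangle F=N\in\mathcal{N}$ with $F\in\mathcal{F}_s$, so that $A\subseteq F\cup N$ and $\mathbb{P}(F)=\widetilde{\mathbb{P}}(A)=0$; then $F\cup N\in\mathcal{N}$, and stability under subsets places every subset of $A$ in $\mathcal{N}\subseteq\widetilde{\mathcal{F}}_0$, as required ($\widetilde{\mathcal{F}}$ is treated the same way). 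For minimality, let $(\Omega,\mathcal{F}',(\mathcal{F}'_t),\mathbb{P}')$ satisfy the first two items; the one substantial inclusion is $\mathcal{N}\subseteq\mathcal{F}'_0$: given $B\subseteq\bigcup_n A_n$ with $A_n\in\mathcal{F}_{s_n}\subseteq\mathcal{F}'_{s_n}$ and $\mathbb{P}'(A_n)=\mathbb{P}(A_n)=0$, the natural conditions for $\mathcal{F}'$ put every subset of each $A_n$ into $\mathcal{F}'_0$, whence $B=\bigcup_n(B\cap A_n)\in\mathcal{F}'_0$. From $\mathcal{N}\subseteq\mathcal{F}'_0$ one gets $\mathcal{G}_s\subseteq\mathcal{F}'_s$, and right-continuity of $\mathcal{F}'$ yields $\widetilde{\mathcal{F}}_t=\bigcap_{s>t}\mathcal{G}_s\subseteq\bigcap_{s>t}\mathcal{F}'_s=\mathcal{F}'_t$; the remaining inclusion $\widetilde{\mathcal{F}}\subseteq\mathcal{F}'$ and the extension of $\widetilde{\mathbb{P}}$ by $\mathbb{P}'$ follow identically. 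Uniqueness is then automatic, since two spaces satisfying all three items each dominate the other by minimality.

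The only genuinely delicate point is the identification of $\mathcal{N}$: one must check simultaneously that enlarging by the $\sigma$-ideal of finite-time negligible sets is \emph{enough} to force the natural conditions and yet \emph{no more} than what every admissible $\mathcal{F}'$ must already contain. A filtration of finite $\sigma$-algebras on $[0,1]$ whose finite-time negligible sets are trivial, but whose limit $\sigma$-algebra is the Borel one, shows that one cannot take all $\mathbb{P}$-negligible sets of $\mathcal{F}$: singletons would then be wrongly adjoined. This is exactly the gap between the natural and the usual conditions, and pinning down this class precisely — rather than the extension of the measure or the right-continuity, which are formal — is the heart of the argument.
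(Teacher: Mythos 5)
Your construction is correct, and it is the standard one underlying the cited result: the paper itself gives no proof of this proposition (it is imported from \cite{NN2}), but adjoining the $\sigma$-ideal generated by the $\mathbb{P}$-negligible sets of the finite-time $\sigma$-algebras $\bigcup_{s\geq 0}\mathcal{F}_s$ and then taking the right-continuous regularization is exactly how the natural enlargement is built there, and your verifications (well-definedness of $\widetilde{\mathbb{P}}$, the natural conditions via $F\cup N\in\mathcal{N}$, and minimality via $\mathcal{N}\subseteq\mathcal{F}'_0$) are all sound. Your closing remark correctly identifies the one point where care is needed, namely that one must not adjoin all $\mathbb{P}$-null sets of $\mathcal{F}=\bigvee_t\mathcal{F}_t$, which is precisely what distinguishes the natural from the usual conditions.
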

\noindent
Intuitively, the natural enlargement of a filtered probability space is its smallest extension which 
satisfies the natural conditions. 
We also introduce a class of filtered measurable spaces $(\Omega, \mathcal{F},
  (\mathcal{F}_t)_{t \geq 0})$ such that any compatible family $(\mathbb{Q}_t)_{t \geq 0}$ 
of probability measures, $\mathbb{Q}_t$ defined on $\mathcal{F}_t$, can be extented to a probability 
measure $\mathbb{Q}$ defined on $\mathcal{F}$. 
\begin{definition}
 \label{P}
Let $(\Omega, \mathcal{F}, (\mathcal{F}_t)_{t \geq 0})$ be a filtered measurable space, such that
$\mathcal{F}$ is the $\sigma$-algebra generated by $\mathcal{F}_t$, $t \geq 0$: 
$\mathcal{F}=\bigvee_{t\geq0}\mathcal{F}_t$. We say that the property\footnote{(P) stands for Parthasarathy since such conditions where introduced by him in \cite{Parth}.} (P)
 holds if and only if $(\mathcal{F}_t)_{t \geq 0}$ enjoys the following properties: 
\begin{itemize}
\item for all $t \geq 0$, $\mathcal{F}_t$ is generated by a countable number of sets;
\item for all $t \geq 0$, there exists a Polish space $\Omega_t$, and a surjective map 
 $\pi_t$ from $\Omega$ to $\Omega_t$, such that $\mathcal{F}_t$ is the $\sigma$-algebra of the inverse
 images, by $\pi_t$, of Borel sets in $\Omega_t$, and such that for all $B \in \mathcal{F}_t$, 
 $\omega \in \Omega$, $\pi_t (\omega) \in \pi_t(B)$ implies $\omega \in B$;
\item if $(\omega_n)_{n \geq 0}$ is a sequence of elements of $\Omega$, such that for all $N \geq 0$,
$$\bigcap_{n = 0}^{N} A_n (\omega_n) \neq \emptyset,$$
where $A_n (\omega_n)$ is the intersection of the sets in $\mathcal{F}_n$ containing $\omega_n$, 
then:
$$\bigcap_{n = 0}^{\infty} A_n (\omega_n) \neq \emptyset.$$
\end{itemize}
\end{definition}
\noindent
A fundamental example of a filtered measurable space $(\Omega, \mathcal{F}, (\mathcal{F}_t)_{t \geq 0})$
satisfying the property (P) can be constructed as follows: we take 
 	 $\Omega$ to be equal to  $\mathcal{C}(\mathbb{R}_+,\mathbb{R}^d)$, the space of continuous functions from 
	$\mathbb{R}_+$ to $\mathbb{R}^d$, or $\mathcal{D}(\mathbb{R}_+,\mathbb{R}^d)$, the space of c\`adl\`ag 
functions from $\mathbb{R}_+$ 
	to $\mathbb{R}^d$ (for some $d \geq 1$), and for $t \geq 0$, we define
 $(\mathcal{F}_{t})_{t \geq 0}$ as the natural filtration of the canonical
 process, and we set $$\mathcal{F} := \bigvee_{t\geq0}\mathcal{F}_t.$$
The combination of the property (P) and the natural conditions gives the following definition:
\begin{definition} \label{NP}
Let $(\Omega, \mathcal{F},(\mathcal{F}_t)_{t \geq 0}, \mathbb{P})$ be a filtered probability space. 
We say that it satisfies the property (NP) if and only if it is the natural enlargement of a 
filtered probability space $(\Omega, \mathcal{F}^0,(\mathcal{F}^0_t)_{t \geq 0}, \mathbb{P}^0)$
such that the filtered measurable space $(\Omega, \mathcal{F}^0,(\mathcal{F}^0_t)_{t \geq 0})$ enjoys 
property (P). 
\end{definition}
\noindent
In \cite{NN2} the following result on extension of probability measures is proved (in a slightly 
more general form): 
\begin{proposition} \label{extensionaugmentation}
Let $(\Omega, \mathcal{F}, (\mathcal{F}_t)_{t \geq 0}, \mathbb{P})$ be a filtered probability space, 
satisfying property (NP). 
 Then, the $\sigma$-algebra $\mathcal{F}$ is the $\sigma$-algebra generated by $(\mathcal{F}_t)_{t \geq 0}$,
and for all coherent families of probability measures
$(\mathbb{Q}_t)_{t \geq 0}$ such that $\mathbb{Q}_t$ is defined 
on $\mathcal{F}_t$,
and is absolutely continuous with respect to the restriction
of $\mathbb{P}$ to $\mathcal{F}_t$, there exists a unique probability measure
$\mathbb{Q}$
on $\mathcal{F}$ which coincides with $\mathbb{Q}_t$ on $\mathcal{F}_t$ 
for all $t \geq 0$. 
\end{proposition}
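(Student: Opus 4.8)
The plan is to descend to the pre-enlargement space, where property (P) furnishes the extension, and then climb back up through the natural enlargement, the finite-time absolute-continuity hypothesis being exactly what makes this last step consistent. By Definition \ref{NP}, the space $(\Omega, \mathcal{F}, (\mathcal{F}_t)_{t\geq0}, \mathbb{P})$ is the natural enlargement of some $(\Omega, \mathcal{F}^0, (\mathcal{F}^0_t)_{t\geq0}, \mathbb{P}^0)$ enjoying property (P), so that $\mathcal{F}^0 = \bigvee_{t\geq0} \mathcal{F}^0_t$. Writing $\mathcal{N}$ for the collection of subsets of $\mathbb{P}^0$-negligible sets belonging to $\bigcup_{t\geq0}\mathcal{F}^0_t$, the construction of the enlargement (see \cite{NN2}) gives $\mathcal{F} = \sigma(\mathcal{F}^0 \cup \mathcal{N})$. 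Since any such negligible set is $\mathbb{P}$-negligible and lies in some $\mathcal{F}_t$, the natural conditions place $\mathcal{N} \subseteq \mathcal{F}_0$; hence $\bigvee_{t\geq0}\mathcal{F}_t \supseteq \sigma\big((\bigcup_{t\geq0}\mathcal{F}^0_t) \cup \mathcal{N}\big) = \mathcal{F}$, and the reverse inclusion being trivial, I obtain $\mathcal{F} = \bigvee_{t\geq0}\mathcal{F}_t$. Uniqueness of $\mathbb{Q}$ then follows from the monotone class theorem, $\bigcup_{t\geq0}\mathcal{F}_t$ being a generating algebra on which any two candidates coincide.

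For existence I would first restrict, setting $\mathbb{Q}^0_t := \mathbb{Q}_t|_{\mathcal{F}^0_t}$. Coherence of $(\mathbb{Q}_t)_{t\geq0}$ descends to coherence of $(\mathbb{Q}^0_t)_{t\geq0}$, and since $\mathbb{P}$ extends $\mathbb{P}^0$ and $\mathcal{F}^0_t \subseteq \mathcal{F}_t$, the hypothesis $\mathbb{Q}_t \ll \mathbb{P}|_{\mathcal{F}_t}$ yields $\mathbb{Q}^0_t \ll \mathbb{P}^0|_{\mathcal{F}^0_t}$. Because $(\Omega, \mathcal{F}^0, (\mathcal{F}^0_t)_{t\geq0})$ enjoys property (P), the compatible family $(\mathbb{Q}^0_t)_{t\geq0}$ extends to a probability measure $\mathbb{Q}^0$ on $\mathcal{F}^0$; this extension is the very feature that characterises (P), cf. \cite{Parth}. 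One must bear in mind that $\mathbb{Q}^0$ need \emph{not} be absolutely continuous with respect to $\mathbb{P}^0$ on all of $\mathcal{F}^0$ (the measures of interest are typically singular), so some care is needed in climbing back up.

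The main obstacle is precisely the consistency of this last step. The crucial observation is that, although $\mathbb{Q}^0$ may charge $\mathbb{P}^0$-null sets of $\mathcal{F}^0$, it annihilates every null set measurable at a finite time: if $A \in \mathcal{F}^0_t$ with $\mathbb{P}^0(A)=0$, then $\mathbb{Q}^0(A) = \mathbb{Q}^0_t(A)=0$ by finite-time absolute continuity, and by $\sigma$-additivity (continuity from below, after taking partial unions) the same holds for any countable union of such sets. Hence every member of the $\sigma$-ideal generated by $\mathcal{N}$ is contained in a set that is simultaneously $\mathbb{P}^0$- and $\mathbb{Q}^0$-null, which is exactly what permits the extension of $\mathbb{Q}^0$ to $\mathcal{F} = \sigma(\mathcal{F}^0 \cup \mathcal{N})$: writing each $\widetilde{A} \in \mathcal{F}$ as $C \,\triangle\, B$ with $C \in \mathcal{F}^0$ and $B$ in that $\sigma$-ideal, I set $\mathbb{Q}(\widetilde{A}) := \mathbb{Q}^0(C)$, which is well defined and $\sigma$-additive because the ambiguity $C \,\triangle\, C'$ is $\mathbb{Q}^0$-null. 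Here both hypotheses are indispensable: finite-time absolute continuity makes $\mathbb{Q}^0$ vanish on the adjoined negligible sets, while the natural conditions (rather than the usual ones) adjoin only subsets of \emph{finite-time} null sets, so that the potential singularity of $\mathbb{Q}^0$ on the tail field never interferes.

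It remains to verify $\mathbb{Q}|_{\mathcal{F}_t} = \mathbb{Q}_t$ on each $\mathcal{F}_t$. Using the $C \,\triangle\, B$ representation one checks that a $\mathbb{P}$-null set of $\mathcal{F}_t$ has an $\mathcal{F}^0$-representative that is finite-time null, hence $\mathbb{Q}^0$-null, so that $\mathbb{Q}|_{\mathcal{F}_t} \ll \mathbb{P}|_{\mathcal{F}_t}$; as $\mathbb{Q}_t$ is absolutely continuous too, it suffices to identify their densities. The two measures agree on $\mathcal{F}^0_t$ since $\mathbb{Q}(A) = \mathbb{Q}^0_t(A) = \mathbb{Q}_t(A)$, and by coherence this propagates to $\mathcal{F}^0_{t+} = \bigcap_{s>t}\mathcal{F}^0_s$; finally, the fact (established in \cite{NN2} under the natural conditions) that modulo $\mathbb{P}$-negligible sets $\mathcal{F}_t$ reduces to $\mathcal{F}^0_{t+}$ allows the densities to be taken $\mathcal{F}^0_{t+}$-measurable, whence agreement on $\mathcal{F}^0_{t+}$ forces $\mathbb{Q}|_{\mathcal{F}_t} = \mathbb{Q}_t$. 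The genuine difficulty lies entirely in the negligible-set extension of the third paragraph; the surrounding arguments are routine monotone-class and density manipulations.
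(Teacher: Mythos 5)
The paper does not actually prove Proposition \ref{extensionaugmentation}: it is imported verbatim from \cite{NN2} (``In \cite{NN2} the following result \ldots is proved''), so there is no internal argument to compare yours against. Judged on its own terms, your proposal follows what is surely the intended route and I find no genuine gap: you descend to the underlying space satisfying property (P), where the Parthasarathy-type extension theorem produces $\mathbb{Q}^0$ on $\mathcal{F}^0=\bigvee_t\mathcal{F}^0_t$ from the restricted compatible family, and you correctly isolate the crux of the statement, namely that the finite-time absolute continuity $\mathbb{Q}_t\ll\mathbb{P}|_{\mathcal{F}_t}$ forces $\mathbb{Q}^0$ to vanish on every $\mathbb{P}^0$-negligible set belonging to some $\mathcal{F}^0_t$, and that the natural conditions (unlike completions of $\mathcal{F}$ by all of $\mathbb{P}$'s null sets) adjoin \emph{only} subsets of such finite-time null sets, so the standard $C\,\triangle\,B$ completion argument extends $\mathbb{Q}^0$ unambiguously to $\mathcal{F}=\sigma(\mathcal{F}^0\cup\mathcal{N})$ even though $\mathbb{Q}^0$ may be singular to $\mathbb{P}^0$ on the tail. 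The identification $\mathbb{Q}|_{\mathcal{F}_t}=\mathbb{Q}_t$ via agreement on $\mathcal{F}^0_{t+}$ and mutual absolute continuity with respect to $\mathbb{P}|_{\mathcal{F}_t}$ is also sound. The only caveat is that your argument leans on structural facts about the natural enlargement --- that $\mathcal{F}=\sigma(\mathcal{F}^0\cup\mathcal{N})$ and that $\mathcal{F}_t$ coincides with $\sigma(\mathcal{F}^0_{t+}\cup\mathcal{N})$ --- which the present paper never states and which you must genuinely take from \cite{NN2}; if those descriptions are granted, the proof is complete, and your first paragraph's derivation of $\mathcal{F}=\bigvee_t\mathcal{F}_t$ together with the monotone class uniqueness argument is routine and correct.
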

\noindent
By using all the results and definitions above, one can state rigorously the main result
 of \cite{NN1} in its most general form:
\begin{theorem} \label{all}
Let $(X_t)_{t \geq 0}$ be a submartingale of the class $(\Sigma)$ (in particular $X_t$ is integrable 
for all $t \geq 0$), defined on a filtered probability space 
$(\Omega, \mathcal{F}, \mathbb{P}, (\mathcal{F}_t)_{t \geq 0})$ which satisfies the property (NP).
In particular, $(\mathcal{F}_t)_{t \geq 0}$
 satisfies the natural conditions and $\mathcal{F}$ is 
the $\sigma$-algebra generated by $\mathcal{F}_t$, $t \geq 0$. 
Then, there exists a unique $\sigma$-finite measure $\mathcal{Q}$, defined on
 $(\Omega, \mathcal{F}, \mathbb{P})$,
such that for $g:= \sup\{t \geq 0, X_t = 0 \}$:
\begin{itemize}
\item $\mathcal{Q} [g = \infty] = 0$;
\item For all $t \geq 0$, and for all $\mathcal{F}_t$-measurable, bounded random variables $F_t$,
\begin{equation}\label{nikomuk}
\mathcal{Q} \left[ F_t \, \mathds{1}_{g \leq t} \right] = \mathbb{P} \left[F_t X_t \right].
\end{equation}
\end{itemize}
\noindent
\end{theorem}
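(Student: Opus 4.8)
The plan is to build $\mathcal{Q}$ as an increasing limit of finite measures $\nu_n$, each representing the restriction of $\mathcal{Q}$ to $\{g \le n\}$, and to produce each $\nu_n$ from a coherent family of $\mathbb{P}$-densities via Proposition~\ref{extensionaugmentation}. The engine is the balayage identity for class $(\Sigma)$: writing $X = N + A$, for locally bounded $f$ with primitive $F$ the process $f(A_t) X_t - F(A_t)$ is a local martingale, because $dA$ being carried by $\{X = 0\}$ kills the $X f'(A)\,dA$ term while $f(A)\,dA = dF(A)$ cancels. Applying this conditionally on $\mathcal{F}_n$ with $f = \mathds{1}_{[0,A_n]}$ (so $F(x) = x \wedge A_n$) shows that, for $u \ge n$,
\[
D^n_u := X_u \, \mathds{1}_{A_u = A_n}
\]
is, after an integrability upgrade using that $N$ is a genuine martingale, a nonnegative martingale with $D^n_n = X_n$; for $u \le n$ I set $D^n_u := \mathbb{E}_{\mathbb{P}}[X_n \mid \mathcal{F}_u]$.

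First I would check that $(D^n_u)_{u \ge 0}$ is a nonnegative $\mathbb{P}$-martingale whose time-$u$ law is absolutely continuous with respect to $\mathbb{P}\vert_{\mathcal{F}_u}$, so that Proposition~\ref{extensionaugmentation} (valid under (NP)) yields a unique finite measure $\nu_n$ on $\mathcal{F}$ with $\nu_n\vert_{\mathcal{F}_u} = D^n_u \cdot \mathbb{P}$. Since $A$ is increasing, $\{A_u = A_n\} \subseteq \{A_u = A_m\}$ for $n \le m \le u$, giving $D^n_u \le D^m_u$ and $D^n_u = D^m_u \, \mathds{1}_{A_u = A_n}$; hence $\nu_n \le \nu_{n+1}$ and $\mathcal{Q} := \sup_n \nu_n$ is a well-defined measure. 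A monotone argument on the $\mathcal{F}_u$-measurable sets $\{A_u > A_n + \varepsilon\}$, on which $D^n_u$ vanishes, shows $\nu_n$ is carried by $\{A_\infty = A_n\}$; together with $D^n_u = D^m_u \, \mathds{1}_{A_u = A_n}$ this gives $\nu_m\vert_{\{A_\infty = A_n\}} = \nu_n$, so $\mathcal{Q}\vert_{\{A_\infty = A_n\}} = \nu_n$, whence $\mathcal{Q}[g = \infty] = 0$ and $\mathcal{Q}$ is $\sigma$-finite (exhausted by the sets $\{A_\infty = A_n\}$, of finite mass $\mathbb{E}_{\mathbb{P}}[X_n]$). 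Taking $n = t$ and using $D^t_t = X_t$ then yields $\mathcal{Q}[\Lambda_t \cap \{A_\infty = A_t\}] = \mathbb{E}_{\mathbb{P}}[\mathds{1}_{\Lambda_t} X_t]$ for $\Lambda_t \in \mathcal{F}_t$, which is the desired identity (\ref{nikomuk}) once $\{A_\infty = A_t\}$ is identified with $\{g \le t\}$.

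For uniqueness I would run the same computation in reverse: if $\mathcal{Q}'$ satisfies both bullet points, then for $\Lambda_u \in \mathcal{F}_u$ with $u \ge n$ the set $\Lambda_u \cap \{A_u = A_n\}$ lies in $\mathcal{F}_u$, and intersecting it with $\{g \le u\} = \{A_\infty = A_u\}$ recovers $\Lambda_u \cap \{A_\infty = A_n\}$; the defining identity at time $u$ then forces $\mathcal{Q}'[\Lambda_u \cap \{A_\infty = A_n\}] = \mathbb{E}_{\mathbb{P}}[\mathds{1}_{\Lambda_u} D^n_u]$, i.e. $\mathcal{Q}'\vert_{\{A_\infty = A_n\}} = \nu_n$. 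Since $\mathcal{Q}'[g = \infty] = 0$ and $\{g < \infty\} = \bigcup_n \{A_\infty = A_n\}$, this determines $\mathcal{Q}'$ completely, so $\mathcal{Q}' = \mathcal{Q}$.

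The hard part will be the identification $\{g \le t\} = \{A_t = A_\infty\}$ used throughout, that is, that the last zero of $X$ coincides with the last time $A$ increases, equivalently that $X$ cannot return to $0$ once $A$ has become constant. In the continuous case this follows because a nonnegative continuous local martingale started at $0$ is identically $0$, so $X$ cannot have an isolated zero where $A$ is locally constant; in the c\`adl\`ag case one must argue more carefully (or establish the identity only up to $\mathcal{Q}$-null sets), and this is the step I expect to require the most care, together with the routine but necessary verifications that the balayage local martingales are true uniformly integrable martingales and that the hypotheses of Proposition~\ref{extensionaugmentation} are indeed met.
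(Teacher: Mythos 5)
Your construction is sound, and it is genuinely different from the route taken in the paper (following \cite{NN1}). There, a single smoothed martingale does all the work: for $f$ integrable, strictly positive and bounded, with $G(x)=\int_x^\infty f(y)\,dy$ and $G/f$ bounded (e.g. $f(x)=e^{-x}$), the process $M^f_t = G(A_t) - \mathbb{P}[G(A_\infty)\mid\mathcal{F}_t] + f(A_t)X_t$ is a nonnegative c\`adl\`ag martingale, property (NP) turns it into a finite measure $\mathcal{M}^f$, and one sets $\mathcal{Q} := \frac{1}{f(A_\infty)}.\mathcal{M}^f$. You replace the smooth cutoff $f(A_\infty)$ by the hard cutoffs $\mathds{1}_{A_\infty=A_n}$: your $\nu_n$ are exactly the restrictions $\mathcal{Q}\vert_{\{A_\infty=A_n\}}$ seen through their local densities $D^n_u$, and $\mathcal{Q}$ is recovered as an increasing limit. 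What you gain is that you never manipulate $\mathbb{P}[G(A_\infty)\mid\mathcal{F}_t]$ nor divide by $f(A_\infty)$, and uniqueness falls out of the same computation; what you pay is the bookkeeping of the compatibility relations $\nu_m\vert_{\{A_\infty=A_n\}}=\nu_n$ and an approximation step to pass from $\bigcup_u\mathcal{F}_u$ to $\mathcal{F}$ (do not skip it: $\{\nu_n\le\nu_m\}$ is not a $\lambda$-system, but a finite-measure approximation of $\Lambda\in\mathcal{F}$ by sets of the algebra handles it). One repair: the balayage identity with $f=\mathds{1}_{[0,A_n]}$ is not literally licensed (there is no derivative to kill), but you do not need it. With $\tau_n:=\inf\{s>n: A_s>A_n\}$ one has $\{A_u=A_n\}=\{\tau_n\ge u\}$ and $X_{\tau_n}=0$ on $\{\tau_n<\infty\}$ (the support of $dA$ accumulates zeros of $X$ from the right, and $X$ is right-continuous), whence $D^n_u = X_{u\wedge\tau_n} = N_{u\wedge\tau_n}+A_n$ for $u\ge n$; this is a true martingale by optional sampling of the genuine martingale $N$ at the bounded stopping time $u\wedge\tau_n$, which is your ``integrability upgrade''.

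The step you flag closes by the same mechanism, and only up to $\mathcal{Q}$-null sets, which is all you need. The inclusion $\{g\le t\}\subseteq\{A_\infty=A_t\}$ is pointwise, since $dA$ is carried by $\{X=0\}$. For the converse, let $d_t:=\inf\{u>t: X_u=0\}$, a stopping time by the d\'ebut theorem under natural conditions, with $X_{d_t}=0$ on $\{d_t<\infty\}$ by right-continuity, and note $d_t\le\tau_t$ (any increase of $A$ after $t$ forces a prior zero of $X$). Optional sampling then gives, for $u\ge t$, $\nu_t(d_t\le u)=\mathbb{P}[D^t_u\mathds{1}_{d_t\le u}]=\mathbb{P}[D^t_{d_t}\mathds{1}_{d_t\le u}]=\mathbb{P}[X_{d_t}\mathds{1}_{d_t\le u}]=0$, so $\nu_t(g>t)=0$ and $\{g\le t\}=\{A_\infty=A_t\}$ holds $\nu_t$-a.e., hence $\mathcal{Q}$-a.e. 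This is exactly the device the paper itself uses in the class (D) case (proof of Proposition \ref{uniformlyintegrable}), and it requires no path continuity. With this in hand, (\ref{nikomuk}), the property $\mathcal{Q}[g=\infty]=0$, and your uniqueness argument (which needs only the pointwise inclusion $\{g<\infty\}\subseteq\bigcup_n\{A_\infty=A_n\}$ together with the $\mathcal{Q}'$-a.e. identification) all go through.
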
 
\noindent As already mentioned, Theorem \ref{all} has already been obtained in some special cases (but 
not under the most rigorous formulation with the correct assumption on the underlying filtered
 probability space) such as the case where $X_t$ is the absolute value of the canonical 
process on the Wiener space or when $X_t=|Y_t|^{\alpha-1}$ where $Y$ is a symmetric
 stable L\'evy process of index $\alpha\in(1,2)$, although in this latter case the
 property (\ref{nikomuk}) was not noticed (\cite{YYY}). In fact, almost all our results will
 apply to a large class of symmetric L\'evy processes including the symmetric stable L\'evy processes
 of index $\alpha\in(1,2)$. We shall now detail a little more this last example since it provides
 natural examples of processes with jumps,
 living on the Skorokhod space. Let us define, one the space 
$\mathcal{D}(\mathbb{R}_+,\mathbb{R})$ 
of c\`adl\`ag functions from $\mathbb{R}_+$ to $\mathbb{R}$, 
 $(\mathcal{F}_{t})_{t \geq 0}$ as the natural filtration of the canonical
 process $(Y_t)_{t \geq 0}$, and let us set $$\mathcal{F} := \bigvee_{t\geq0}\mathcal{F}_t.$$
 We consider on
 $\mathcal{D}(\mathbb{R}_+,\mathbb{R})$ the probability $\mathbb{P}$ under which $(Y_t)_{t \geq 0}$ is
 a symmetric L\'evy process
 with exponent $\Psi$: $$\mathbb{P}[\exp(i \xi Y_t)]=\exp(-t\Psi(\xi)).$$ 
Moreover, we assume that $0$ is regular for itself and that $(Y_t)_{t \geq 0}$ is recurrent, or equivalently
 (see Bertoin \cite{Bertoin}):
 $$\int_{-\infty}^{\infty}\dfrac{d \xi}{1+\Psi(\xi)}<\infty \text{ and }
 \int_{0}\dfrac{d \xi}{\Psi(\xi)}=\infty.$$
 In the case where $(Y_t)_{t \geq 0}$ is a symmetric $\alpha$-stable L\'evy process of index
 $\alpha\in(1,2)$, $\Psi(\xi)=|\xi|^\alpha$ and the above conditions on $\Psi$ are
 satisfied (see \cite{Bertoin}). Salminen and Yor \cite{SalYor} have 
proved that if for some $x \in \mathbb{R}$:
 $$v(x)=\dfrac{1}{\pi}\int_{0}^\infty \dfrac{1-\cos(\xi x)}{\Psi(\xi)} d \xi,$$
 then $$v(Y_t-x)=v(x)+N_t^x+L_t^x,$$where $N_t^x$ is a martingale and
 where $(L_t^x)_{t \geq 0}$ is the local time at level $x$ of the L\'evy process $(Y_t)_{t \geq 0}$. Since
  $(L_t^x)_{t \geq 0}$ is continuous, increasing, adapted
 and only increases when $Y_t=x$ (see Bertoin \cite{Bertoin} chapter V), the process $(v(Y_t-x))_{t \geq 0}$ is
 of class $(\Sigma)$, moreover, $(Y_t)_{t \geq 0}$ is recurrent and $0$ is regular
 for itself, which implies that $\lim_{t\to\infty} L_t^x=\infty$, $\mathbb{P}$-almost surely.  Hence
 Theorem \ref{all} applies, and for any $x\in\mathbb{R}$, there exists a $\sigma$-finite measure 
$\mathcal{Q}_x$, singular to $\mathbb{P}$ and such that all the properties 
of Theorem \ref{all} are satisfied with $X_t=v(Y_t-x)$ and $g\equiv g_x=\sup\{t: Y_t=x\}$. In the special
 case of symmetric $\alpha$-stable L\'evy processes of index $\alpha\in(1,2)$, $v(x)=c(\alpha) |x|^{\alpha-1}$
 for some explicit constant $c(\alpha)$ (see \cite{SalYor}). In the sequel, all
 our results which do not require any assumptions on the sign of the jumps
 will apply to this family of examples as well.

Let us now shortly recall the general construction of $\mathcal{Q}$ given in \cite{NN1}.
For a Borel, integrable, strictly positive and bounded function $f$ from $\mathbb{R}$ to $\mathbb{R}$, 
one  defines the function $G$ as
$$ G(x) = \int_{x}^{\infty} f(y) \, dy,$$
and then one proves 
that the process \begin{equation}
 \left( M^f_t := G(A_t) - \mathbb{P} [G(A_{\infty})| \mathcal{F}_t] +
f(A_t) X_t \right)_{t \geq 0}, \label{MTF2}
\end{equation}
\noindent
is a martingale with respect to $\mathbb{P}$ and the filtration $(\mathcal{F}_t)_{t \geq 0}$.
Since $(\Omega, \mathcal{F}, \mathbb{P}, (\mathcal{F}_t)_{t \geq 0})$ satisfies the natural conditions
and since $G(A_t) \geq G(A_{\infty})$, one can suppose that this martingale is nonnegative and c\`adl\`ag, by 
choosing carefully the 
version of $\mathbb{P} [G(A_{\infty})|\mathcal{F}_t]$. In this case, since 
$(\Omega, \mathcal{F}, \mathbb{P}, (\mathcal{F}_t)_{t \geq 0})$ satisfies the property (NP), there 
exists a unique finite measure $\mathcal{M}^f$ such that for all $t \geq 0$, and for all bounded,
 $\mathcal{F}_t$-measurable functionals $\Gamma_t$: 
$$\mathcal{M}^f [\Gamma_t] = \mathbb{P} [\Gamma_t M_t^f].$$
Now, since $f$ is strictly positive, one can define a $\sigma$-finite measure $\mathcal{Q}^f$ by:
$$\mathcal{Q}^f := \frac{1}{f(A_{\infty})} \, . \mathcal{M}^f.$$
It is proved in \cite{NN1} that if the function $G/f$ is unformly bounded (this condition 
is, for example, satisfied for $f (x) = e^{-x}$), then $\mathcal{Q}^f$ satisfies the conditions
defining $\mathcal{Q}$ in Theorem \ref{all}, which implies the existence part of this result. 
The uniqueness part is proved just after in a very easy way: one remarkable consequence 
of the uniqueness is the fact that $\mathcal{Q}^f$ does not depend on the choice of $f$.

One of the remarkable features of the measure $\mathcal{Q}$, in the special case of the Wiener space
 and when $(X_t)_{t \geq 0}$ is the absolute value of the Wiener process, is that it allows a unified
 view of some penalisation problems related with Wiener measure. More precisely,  Roynette, Vallois and Yor (\cite{RVY}) consider $\mathbb{W}$, the Wiener measure 
on the space  $\mathcal{C}(\mathbb{R}_+, \mathbb{R})$
endowed with its canonical filtration 
$(\mathcal{F}_s)_{s \geq 0}$ (not completed), and then they  define the $\sigma$-algebra $\mathcal{F}$
by $$\mathcal{F}:= 
\underset{s \geq 0}{\bigvee} \mathcal{F}_s.$$ 
They consider
 $(\Gamma_t)_{t \geq 0}$,  a family of 
nonnegative random variables on the same space, such that 
$$0 < \mathbb{W} [\Gamma_t]  < \infty,$$
and for $t \geq 0$, they define the probability measure $$\mathbb{Q}_t := 
\frac{\Gamma_t}{\mathbb{W}[\Gamma_t]} \, . \mathbb{W}.$$
 Then they are able to prove that for many examples of families of functionals $(\Gamma_t)_{t \geq 0}$, there exists a probability 
measure $\mathbb{Q}_{\infty}$ which can be considered as the weak limit of $(\mathbb{Q}_t)_{t \geq 0}$ 
when $t$ goes to infinity, in the following sense: for all $s \geq 0$ and for all events 
$\Lambda_s \in \mathcal{F}_s$,
$$\mathbb{Q}_t [\Lambda_s] \underset{t \rightarrow \infty}{\longrightarrow} \mathbb{Q}_{\infty} [\Lambda_s].$$ Finding the measure $\mathbb{Q}_{\infty}$ amounts
 to solving the penalisation problem associated with the functional $(\Gamma_t)_{t\geq0}$. The
 functional $\Gamma_t$ is typically some function of the local time or the running supremum of the
 Wiener process, or some Feynman-Kac functional of the Wiener process. In the
 monograph \cite{NRY}, Najnudel, Roynette and Yor have proved that the measure $\mathcal{Q}$ associated
 with the absolute value of the Wiener process allows a unified approach to many
 of the examples dealt with separately  in the literature: under some technical
 conditions on the functionals $(\Gamma_t)_{t \geq 0}$, they show that the measure $\mathbb{Q}_{\infty}$ is absolutely continuous with respect to $\mathcal{Q}$ with an explicit density. In this paper, we shall completely solve the penalisation problem (under the assumptions of Theorem \ref{all}) with functionals of the form $\Gamma_t=F_t X_t$, where  $F_t$ is some functional satisfying some not so restrictive condition. In particular we need no assumption on the continuity of the paths of $(X_t)$, nor any Markov or scaling properties.

More precisely, throughout this paper, we establish some of the fundamental properties of the
 measure $\mathcal{Q}$ (which also prepare the ground for a forthcoming work on penalisation
 of diffusion paths).   A remarkable class of martingales defined 
as local densities (with respect to $\mathbb{P}$) of finite measures, absolutely continuous with 
respect to $\mathcal{Q}$,
is involved in a crucial way. The precise definition of these martingales is given in
 Section \ref{sec::2}, 
and they are explicitly computed in some particular cases. In Section \ref{sec::3}, we 
study their behaviour when $t$ goes to infinity, in the most interesting  case where 
$A_{\infty} = \infty$, $\mathbb{P}$-almost surely, and we deduce some information about the 
behaviour of $(X_t)_{t \geq 0}$ under the measure $\mathcal{Q}$. We shall then naturally deduce
 the announced universal penalisation results from our study of the asymptotic behaviour
 of these martingales  and  of $(X_t)_{t \geq 0}$  under $\mathcal{Q}$. In Section \ref{sec::4}, 
 we give a new decomposition of any nonnegative supermartingale 
into the sum of three nonnegative terms, such that the first one is a uniformly integrable
martingale, and the second is a martingale in the class described above. If the initial supermartingale
is a martingale, this decomposition can be interpreted as the decomposition of a
finite measure on $(\Omega, \mathcal{F})$ as the sum of a three measures, one of them being
absolutely continuous with respect to $\mathbb{P}$, the second one being absolutely continuous with 
respect to $\mathcal{Q}$, and the last one being singular with respect to $\mathbb{P}$ and $\mathcal{Q}$. On our
 way we shall also establish the following remarkable fact: if $X$ is of class $(\Sigma)$, with only
 positive jumps, and if $A_\infty=\infty$, then for any $a\in\mathbb{R}$, $(X_t-a)_+$ is of
 class $(\Sigma)$ and the measure $\mathcal{Q}_a$ associated with it is the same as the measure
 $\mathcal{Q}$ associated with $X$. This invariance property was observed in \cite{NRY} in the
 special case where $X$ is the absolute value of the Wiener process.

\section{A remarkable class of martingales related to the measure $\mathcal{Q}$} \label{sec::2}
Let us first remark that since $\mathcal{Q} [g= \infty] = 0$,
 one has $$\mathcal{Q} [A_{\infty}= \infty] = 0,$$ i.e. 
$A_{\infty}$ is finite $\mathcal{Q}$-almost everywhere. Let us state a useful result which was proved in \cite{NN1}:
\begin{proposition} \label{f}
Let $f$ be an integrable function from $\mathbb{R}_+$ to $\mathbb{R}_+$.
Then under the assumptions of Theorem \ref{all}, the measure
$$\mathcal{M}^f := f(A_{\infty}). \mathcal{Q}$$
 is the unique positive, finite measure such that for all $t \geq 0$, and 
for all bounded, $\mathcal{F}_t$-measurable functionals $\Gamma_t$:
\begin{equation}
\mathcal{M}^f[\Gamma_t] = \mathbb{P} [\Gamma_t M^f_t], \label{hoi}
\end{equation}
where the process $(M_t^f)_{t \geq 0}$ is given by:
$$M_t^f := G(A_t) - \mathbb{P} [G(A_{\infty})| \mathcal{F}_t] + f(A_t)X_t$$
for $$G(x) := \int_x^{\infty} f(y) dy.$$
In particular, $(M_t^f)_{t \geq 0}$ is a martingale, c\`adl\`ag if one chooses a suitable
version of the conditional expectation of $G(A_{\infty})$ given $\mathcal{F}_t$. Moreover, 
$(M_t^f)_{t \geq 0}$ is uniquely determined by $f$ in the following sense: two c\`adl\`ag
martingales satisfying (\ref{hoi}) are necessarily indistinguishable. 
\end{proposition}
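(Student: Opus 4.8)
The plan is to prove (\ref{hoi}) together with the martingale property of $M^f$ first for a rich but special class of $f$, then to propagate it to every nonnegative integrable $f$ by linearity and monotone convergence, and finally to deduce the two uniqueness assertions by soft measure-theoretic arguments. Throughout I write $G_h(x)=\int_x^\infty h(y)\,dy$ for an integrand $h$, and I introduce the class $\mathcal{H}$ of nonnegative integrable $f$ for which $f(A_\infty).\mathcal{Q}$ is a finite measure, $(M_t^f)_{t\ge0}$ is a genuine martingale, and (\ref{hoi}) holds. Since both sides of (\ref{hoi}) are linear in $\Gamma_t$, it suffices to verify it for $\Gamma_t\ge0$.

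First I would record that $\mathcal{H}$ contains all ``good'' functions, i.e. all strictly positive, bounded, integrable $f$ with $G_f/f$ bounded: for such $f$ the construction recalled after Theorem \ref{all} gives $\mathcal{Q}^f=\mathcal{Q}$, that is $\mathcal{M}^f=f(A_\infty).\mathcal{Q}$, while $\mathcal{M}^f$ was built so that $\mathcal{M}^f[\Gamma_t]=\mathbb{P}[\Gamma_t M_t^f]$. In particular $\varepsilon e^{-x}$ is good for every $\varepsilon>0$, and so is $\varepsilon e^{-x}+h$ for any bounded, compactly supported $h\ge0$ (one checks $G/f$ is bounded because $f\ge\varepsilon e^{-x}>0$ everywhere and $f=\varepsilon e^{-x}$ off the support of $h$). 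The next observation is that $\mathcal{H}$ is stable under the operations we need: the maps $f\mapsto M^f$ and $f\mapsto f(A_\infty).\mathcal{Q}$ are linear and (\ref{hoi}) is linear, so $\mathcal{H}$ is a cone which is moreover stable under differences that remain nonnegative. Consequently $h=(\varepsilon e^{-x}+h)-\varepsilon e^{-x}\in\mathcal{H}$ for every bounded, compactly supported $h\ge0$.

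I would then show $\mathcal{H}$ is stable under increasing limits and conclude. If $f_n\uparrow f$ with $f_n\in\mathcal{H}$, then $G_{f_n}\uparrow G_f$ pointwise, hence $G_{f_n}(A_t)\uparrow G_f(A_t)$, $\mathbb{P}[G_{f_n}(A_\infty)\mid\mathcal{F}_t]\uparrow\mathbb{P}[G_f(A_\infty)\mid\mathcal{F}_t]$ and $f_n(A_t)X_t\uparrow f(A_t)X_t$ (using $X_t\ge0$), so $M_t^{f_n}\to M_t^f$; applying (conditional) monotone convergence to each of these three terms gives $\mathbb{P}[\Gamma_t M_t^{f_n}]\to\mathbb{P}[\Gamma_t M_t^f]$, while $\mathcal{Q}[\Gamma_t f_n(A_\infty)]\uparrow\mathcal{Q}[\Gamma_t f(A_\infty)]$ by monotone convergence under $\mathcal{Q}$. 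As the two sides coincide for each $n$, they coincide in the limit, which proves (\ref{hoi}) for $f$; taking $\Gamma_t\equiv1$ and using $\mathbb{P}[M_0^f]=G_f(0)-\mathbb{P}[G_f(A_\infty)]+f(0)\,\mathbb{P}[X_0]<\infty$ shows $f(A_\infty).\mathcal{Q}$ is finite, and passing to the limit in $\mathbb{P}[\Gamma_s(M_t^{f_n}-M_s^{f_n})]=0$ shows $M^f$ is a martingale. Since every nonnegative integrable $f$ is the increasing limit of the bounded, compactly supported functions $f_n:=(f\wedge n)\,\mathds{1}_{[0,n]}\in\mathcal{H}$, we get $f\in\mathcal{H}$, i.e. the martingale property, finiteness of $\mathcal{M}^f$, and (\ref{hoi}) for all $f$ as in the statement; the c\`adl\`ag assertion then follows because $M^f$, being a martingale for a filtration satisfying the natural conditions, admits a c\`adl\`ag version, obtained from the displayed formula by selecting the c\`adl\`ag version of $\mathbb{P}[G_f(A_\infty)\mid\mathcal{F}_t]$.

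For the two uniqueness statements: if a finite measure $\mathcal{N}$ satisfies (\ref{hoi}), then $\mathcal{N}[\Lambda_t]=\mathbb{P}[\mathds{1}_{\Lambda_t}M_t^f]=\mathcal{M}^f[\Lambda_t]$ for every $\Lambda_t\in\mathcal{F}_t$ and every $t$; the family $\bigcup_{t\ge0}\mathcal{F}_t$ is an algebra generating $\mathcal{F}=\bigvee_{t\ge0}\mathcal{F}_t$, and $\mathcal{N},\mathcal{M}^f$ are finite with equal total mass, so the monotone class theorem forces $\mathcal{N}=\mathcal{M}^f$. If $M,M'$ are c\`adl\`ag martingales both satisfying (\ref{hoi}), then $\mathbb{P}[\Gamma_t(M_t-M_t')]=0$ for all bounded $\mathcal{F}_t$-measurable $\Gamma_t$, whence $M_t=M_t'$ $\mathbb{P}$-a.s. for each fixed $t$; applying this along the rationals and invoking right-continuity of both paths yields indistinguishability. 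I expect the genuine obstacle to be exactly the passage from ``good'' $f$ to an arbitrary nonnegative integrable $f$: the good class is neither stable under the operations required nor able to dominate heavy-tailed $f$ (boundedness of $G/f$ essentially forces exponential tails), so the whole argument has to be carried by $\mathcal{H}$ itself, with the good functions used only to produce, as differences, all bounded compactly supported generators; all the attendant integrability bookkeeping (integrability of $f(A_t)X_t$, finiteness of $\mathcal{M}^f$, legitimacy of the limit interchanges) reduces through (\ref{nikomuk}) and the identity $\mathcal{Q}[f(A_\infty)]=\mathbb{P}[M_0^f]$ to the finiteness already known in the good case.
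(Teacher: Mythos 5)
The paper does not actually prove Proposition \ref{f} --- it is quoted from \cite{NN1} --- so the only material to compare against is the construction sketch following Theorem \ref{all}, which handles strictly positive, bounded, integrable $f$ with $G/f$ bounded. Your proof is the natural bootstrap from exactly that special case to general nonnegative integrable $f$, and as far as I can check it is correct. The cone-and-difference step legitimately places every bounded, compactly supported $h\geq 0$ in your class $\mathcal{H}$ (the subtraction of measures and of martingales is licit because $\varepsilon e^{-A_\infty}.\mathcal{Q}$ is finite and $M^{\varepsilon e^{-\cdot}}_t$ is integrable), and the monotone passage $f_n=(f\wedge n)\mathds{1}_{[0,n]}\uparrow f$ works provided one first secures $\mathcal{Q}[f(A_\infty)]<\infty$ from the $t=0$, $\Gamma\equiv 1$ identity (where $f_n(A_0)X_0=f_n(0)X_0\leq f(0)X_0$ is harmless) before performing the term-by-term monotone convergence at a general $t$; you do flag this ordering, and it is genuinely needed, since the third term $\mathbb{P}[\Gamma_t f_n(A_t)X_t]$ is controlled only through the identity itself. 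The two uniqueness arguments (monotone class on the generating algebra $\bigcup_t\mathcal{F}_t$, which does generate $\mathcal{F}$ under property (NP); agreement along rationals plus right-continuity for the martingale) are standard and fine. Two caveats. First, be explicit that your argument is a reduction to the ``good'' case, whose content (the martingale property of $M^f$ and the identification $\mathcal{Q}^f=\mathcal{Q}$) the paper itself only recalls from \cite{NN1}; that is a fair reading of what is available here, but it is not a proof from scratch. Second, the closing c\`adl\`ag assertion is glossed: for unbounded $f$ the process $f(A_t)X_t$ need not be c\`adl\`ag, so choosing a c\`adl\`ag version of $\mathbb{P}[G(A_\infty)\mid\mathcal{F}_t]$ does not by itself make the displayed formula c\`adl\`ag. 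What is true, and all that is used downstream, is that $M^f$, being a martingale under the natural conditions, admits a c\`adl\`ag version; the paper's own Remark after the Corollary treats the existence of a c\`adl\`ag modification of $f(A_t)X_t$ as a consequence of the proposition, not as an ingredient of its proof.
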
 
 \noindent
 Proposition \ref{f} gives a relation between a finite measure which is absolutely continuous with 
respect to $\mathcal{Q}$ ($\mathcal{M}^f$), and a c\`adl\`ag martingale $(M_t^f)_{t \geq 0}$. 
This relation can be generalized as follows:
\begin{proposition} \label{MtF}
We suppose that the assumptions of Theorem \ref{all} hold and we take the same notation. 
Let $F$ be a $\mathcal{Q}$-integrable, nonnegative functional defned on $(\Omega, \mathcal{F})$. Then, there 
exists
 a c\`adl\`ag $\mathbb{P}$-martingale $(M_t(F))_{t \geq 0}$ such
 that the measure $\mathcal{M}^F := F. \mathcal{Q}$ is the unique finite measure
 satisfying, for all $t \geq 0$, and 
for all bounded, $\mathcal{F}_t$-measurable functionals $\Gamma_t$:
$$\mathcal{M}^F [\Gamma_t] = \mathbb{P} [\Gamma_t M_t(F)].$$
The martingale $(M_t(F))_{t \geq 0}$ is unique up to indistinguishability.
\end{proposition}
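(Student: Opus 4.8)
The plan is to reduce the statement to the already-established case of Proposition \ref{f} by passing through an equivalent \emph{finite} reference measure. Fix once and for all a strictly positive integrable function $f \colon \mathbb{R}_+ \to \mathbb{R}_+$ (for instance $f(x) = e^{-x}$), and let $\mathcal{M}^f = f(A_{\infty}).\mathcal{Q}$ together with its c\`adl\`ag martingale $(M_t^f)_{t \geq 0}$ be as in Proposition \ref{f}; thus $\mathcal{M}^f$ is finite and, for every $t$, its restriction to $\mathcal{F}_t$ admits the density $M_t^f$ with respect to $\mathbb{P}$. Since $A_{\infty} < \infty$ $\mathcal{Q}$-almost everywhere and $f > 0$, the quantity $H := F / f(A_{\infty})$ is a well-defined nonnegative $\mathcal{F}$-measurable function, finite $\mathcal{Q}$-a.e., and $\mathcal{M}^F = F.\mathcal{Q} = H.\mathcal{M}^f$. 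Moreover $H$ is $\mathcal{M}^f$-integrable, because $\int H \, d\mathcal{M}^f = \int F \, d\mathcal{Q} = \mathcal{Q}[F] < \infty$. In this way the problem is transferred from the merely $\sigma$-finite $\mathcal{Q}$ to the finite measure $\mathcal{M}^f$, for which conditional expectations are available.

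Next I would produce the candidate density. Let $h_t$ be a version of the conditional expectation $\mathbb{E}_{\mathcal{M}^f}[H \mid \mathcal{F}_t]$ (well-defined since $\mathcal{M}^f$ is finite and $H$ is $\mathcal{M}^f$-integrable), and set $M_t(F) := h_t \, M_t^f$. This is nonnegative, and after extending the density relation of Proposition \ref{f} from bounded to nonnegative $\mathcal{F}_t$-measurable integrands by monotone convergence, one has for every bounded $\mathcal{F}_t$-measurable $\Gamma_t$ the chain
$$\mathbb{P}[\Gamma_t M_t(F)] = \mathbb{P}[\Gamma_t h_t M_t^f] = \mathcal{M}^f[\Gamma_t h_t] = \mathcal{M}^f[\Gamma_t H] = \mathcal{M}^F[\Gamma_t],$$
where the second equality uses that $M_t^f$ is the $\mathcal{F}_t$-density of $\mathcal{M}^f$ (and that $\Gamma_t h_t$ is $\mathcal{F}_t$-measurable), and the third uses the defining property of $h_t$. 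In particular $\mathbb{P}[M_t(F)] = \mathcal{M}^F[\Omega] = \mathcal{Q}[F] < \infty$, so each $M_t(F)$ is integrable; and for $s \leq t$ and $\Gamma_s$ bounded $\mathcal{F}_s$-measurable, applying the relation at time $t$ (with the $\mathcal{F}_t$-measurable integrand $\Gamma_s$) and then at time $s$ gives $\mathbb{P}[\Gamma_s M_t(F)] = \mathcal{M}^F[\Gamma_s] = \mathbb{P}[\Gamma_s M_s(F)]$, i.e. $(M_t(F))_{t \geq 0}$ is a $\mathbb{P}$-martingale. As the space satisfies the natural conditions and the martingale is nonnegative, I may select a c\`adl\`ag version, which I keep denoting $(M_t(F))_{t \geq 0}$.

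It then remains to settle the two uniqueness statements. For the measure, any finite measure $\mathcal{N}$ with $\mathcal{N}[\Gamma_t] = \mathbb{P}[\Gamma_t M_t(F)]$ for all $t$ and all bounded $\mathcal{F}_t$-measurable $\Gamma_t$ agrees with $\mathcal{M}^F$ on the algebra $\bigcup_{t \geq 0} \mathcal{F}_t$ (it is an algebra since the filtration is increasing), which contains $\Omega$ and generates $\mathcal{F} = \bigvee_{t \geq 0} \mathcal{F}_t$; the monotone class theorem then forces $\mathcal{N} = \mathcal{M}^F$. For the martingale, two c\`adl\`ag $\mathbb{P}$-martingales satisfying the displayed relation coincide $\mathbb{P}$-a.s.\ at each fixed $t$ (both are $\mathcal{F}_t$-densities of $\mathcal{M}^F$ with respect to $\mathbb{P}$), hence are modifications of one another, and two c\`adl\`ag modifications are indistinguishable; this is exactly the uniqueness argument already used in Proposition \ref{f}. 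Incidentally, this uniqueness also shows that $(M_t(F))_{t \geq 0}$ does not depend on the auxiliary choice of $f$.

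The step I expect to carry the real content is the reduction of the first paragraph. Because $\mathcal{Q}$ is only $\sigma$-finite, neither conditional expectations with respect to $\mathcal{Q}$ nor a direct Radon--Nikodym density of $\mathcal{M}^F$ restricted to $\mathcal{F}_t$ against $\mathbb{P}$ are available a priori, and establishing that absolute continuity head-on would require controlling $\mathcal{Q}$ on the set $\{g > t\}$, where it is not governed by the master equation (\ref{nikomuk}). Routing everything through the equivalent finite measure $\mathcal{M}^f$, whose local densities $M_t^f$ are handed to us by Proposition \ref{f}, is precisely what renders the construction of $M_t(F)$ and the required absolute continuity automatic; the remaining verifications are the routine change-of-measure and monotone-class arguments sketched above.
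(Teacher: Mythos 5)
Your proof is correct, and it rests on the same key input as the paper's: Proposition \ref{f} applied with a strictly positive integrable $f$, which supplies the finite measure $\mathcal{M}^f = f(A_{\infty}).\mathcal{Q}$ and its local $\mathbb{P}$-densities $M_t^f$. Where you differ is in how the density of $\mathcal{M}^F$ on $\mathcal{F}_t$ is produced. The paper argues abstractly: it shows that any $\mathbb{P}$-negligible $\Gamma_t \in \mathcal{F}_t$ satisfies $\mathbb{P}[M_t^f\Gamma_t]=0$, hence $\mathcal{Q}[f(A_{\infty})\Gamma_t]=0$, hence (by strict positivity of $f$) $\mathcal{Q}[\Gamma_t]=0$ and $\mathcal{M}^F[\Gamma_t]=0$; local absolute continuity then hands over the density $M_t^{(0)}$ via Radon--Nikodym, and the natural conditions give a c\`adl\`ag modification. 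You instead construct the density explicitly as $M_t(F) = M_t^f\,\mathbb{E}_{\mathcal{M}^f}[F/f(A_{\infty})\,|\,\mathcal{F}_t]$, exploiting that $\mathcal{M}^F = (F/f(A_{\infty})).\mathcal{M}^f$ with an $\mathcal{M}^f$-integrable density. Both are valid; the paper's version is shorter and makes the local absolute continuity of $\mathcal{M}^F$ (and of $\mathcal{Q}$ itself) with respect to $\mathbb{P}$ explicit, a fact it reuses later, while your version buys a concrete formula for $M_t(F)$ of exactly the type the paper rederives in the proof of Proposition \ref{ntmq} (there with $H = e^{-A_{\infty}}$, giving $M_t(F)/M_t(H) = \widetilde{\mathcal{M}}^H[F/H\,|\,\mathcal{F}_t]$). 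The only points to watch in your write-up are cosmetic: define $F/f(A_{\infty})$ arbitrarily on the $\mathcal{Q}$-null set $\{A_{\infty}=\infty\}$, and note that the product $h_tM_t^f$ is independent of the chosen version of $h_t$ up to $\mathbb{P}$-null sets because any $\mathcal{M}^f$-null set in $\mathcal{F}_t$ carries $M_t^f = 0$ $\mathbb{P}$-almost surely; both are immediate.
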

\begin{proof}
Let $t \geq 0$, $\Gamma_t$ be a nonnegative, $\mathcal{F}_t$-measurable functional such that:
$$\mathbb{P} [\Gamma_t]= 0,$$
and let $f$ be an integrable, strictly positive function from $\mathbb{R}_+$ to $\mathbb{R}_+$.
 One has:
$$\mathbb{P} [M_t^f \, \Gamma_t] = 0,$$
and by Proposition \ref{f},
$$\mathcal{Q} [f(A_{\infty}) \, \Gamma_t] = 0.$$
Since $f$ is supposed to be strictly positive, one deduces that
$$\mathcal{Q} [\Gamma_t] = 0,$$
and finally,
$$\mathcal{Q} [F\, \Gamma_t]=0.$$
Therefore, the restriction of the finite measure $\mathcal{M}^F$ to $\mathcal{F}_t$ is absolutely continuous
with respect to $\mathbb{P}$, and there exists a nonnegative, $\mathcal{F}_t$-measurable random
 variable $M^{(0)}_t$ such that
for all $\mathcal{F}_t$-measurable, bounded variables $\Gamma_t$:
$$\mathcal{M}^F[\Gamma_t]= \mathbb{P} [M^{(0)}_t \, \Gamma_t].$$
This equality, available for all $t \geq 0$, implies that $(M^{(0)}_t)_{t \geq 0}$ is a $\mathbb{P}$-martingale.
Since the underlying probability space satisfies the natural conditions, $(M^{(0)}_t)_{t \geq 0}$ admits
a c\`adl\`ag modification $(M_t)_{t \geq 0}$, and one has the equality:
$$\mathcal{M}^F [\Gamma_t]= \mathbb{P} [M_t \, \Gamma_t].$$
By the monotone class theorem this determines uniquely the measure $\mathcal{M}^F$. Moreover, if $(M'_t)_{t \geq 0}$
is a c\`adl\`ag martingale satisfying:
$$\mathcal{M}^F [\Gamma_t]= \mathbb{P} [M'_t \, \Gamma_t],$$
then for all $t \geq 0$, $M_t = M'_t$ almost surely, and since $M$ and $M'$ are c\`adl\`ag, they are 
indistinguishable.
\end{proof}
\noindent
By Proposition \ref{MtF}, one can define a particular family of nonnegative, c\`adl\`ag $\mathbb{P}$-martingales:
the martingales of the form $(M_t(F))_{t \geq 0}$, where $F$ is a nonnegative, $\mathcal{Q}$-integrable 
functional $F$. By construction, these martingales correspond to the local densities, with respect to
 $\mathbb{P}$, of the finite measures which are absolutely continuous with respect to $\mathcal{Q}$.
The situation is similar to the case of nonnegative, uniformly integrable martingales, which are the local
 densities of the finite measures, absolutely continuous with respect to $\mathbb{P}$. 
Proposition \ref{MtF} does not give any explicit formula for the martingale $M_t(F)$.
However, from Proposition \ref{f}, one deduces immediately the following result:
\begin{corollary}
Under the assumptions of Theorem \ref{all}, for all integrable functions $f$ from $\mathbb{R}_+$ to 
$\mathbb{R}_+$, $f(A_{\infty})$ is integrable with respect to $\mathcal{Q}$, and the martingale 
$\big(M_t(f(A_{\infty})) \big)_{t \geq 0}$ is indistinguishable with the martingle $(M_t^f)_{t \geq 0}$
 defined in Proposition \ref{f}. 
\end{corollary}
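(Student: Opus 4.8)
The plan is to deduce the statement almost immediately from Propositions \ref{f} and \ref{MtF}, by identifying the functional $F$ appearing in the latter with $f(A_{\infty})$. I would first observe that the $\mathcal{Q}$-integrability of $f(A_{\infty})$ is nothing but the finiteness of the measure $\mathcal{M}^f = f(A_{\infty}).\mathcal{Q}$, which is already asserted in Proposition \ref{f}. Thus the first half of the corollary requires no separate argument.

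For the indistinguishability, I would apply Proposition \ref{MtF} to the functional $F := f(A_{\infty})$. This application is legitimate once one checks its hypotheses: $f \geq 0$ makes $f(A_{\infty})$ nonnegative; $A_{\infty}$ is $\mathcal{F}$-measurable as the increasing limit of the adapted process $(A_t)_{t \geq 0}$, so that $f(A_{\infty})$ is $\mathcal{F}$-measurable for Borel $f$; and its $\mathcal{Q}$-integrability has just been recorded. Proposition \ref{MtF} then produces a c\`adl\`ag $\mathbb{P}$-martingale $(M_t(f(A_{\infty})))_{t \geq 0}$ characterized, for every $t \geq 0$ and every bounded $\mathcal{F}_t$-measurable $\Gamma_t$, by
$$\mathcal{M}^{f(A_{\infty})}[\Gamma_t] = \mathbb{P}[\Gamma_t \, M_t(f(A_{\infty}))],$$
where, by the very definition of the notation, $\mathcal{M}^{f(A_{\infty})} = f(A_{\infty}).\mathcal{Q} = \mathcal{M}^f$.

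The key step is then to notice that the c\`adl\`ag version of $(M_t^f)_{t \geq 0}$ supplied by Proposition \ref{f} satisfies exactly the same identity (\ref{hoi}) against the very same finite measure $\mathcal{M}^f$. Hence both $(M_t(f(A_{\infty})))_{t \geq 0}$ and $(M_t^f)_{t \geq 0}$ are c\`adl\`ag $\mathbb{P}$-martingales whose associated measure is $\mathcal{M}^f$, and the uniqueness-up-to-indistinguishability clause (present in both propositions) forces them to coincide up to indistinguishability.

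I do not anticipate any genuine obstacle here: the only point needing care is the routine bookkeeping that $f(A_{\infty})$ is an admissible functional for Proposition \ref{MtF}, after which the two uniqueness statements simply have to be matched. The substance of the result is already contained in the two preceding propositions; this corollary merely reconciles their notation in the special case $F = f(A_{\infty})$.
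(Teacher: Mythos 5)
Your argument is correct and is exactly the intended one: the paper offers no written proof, stating only that the corollary follows immediately from Proposition \ref{f}, and your route --- reading off the $\mathcal{Q}$-integrability of $f(A_{\infty})$ from the finiteness of $\mathcal{M}^f$, then matching the two uniqueness-up-to-indistinguishability clauses since $\mathcal{M}^{f(A_{\infty})} = f(A_{\infty}).\mathcal{Q} = \mathcal{M}^f$ --- is precisely the deduction the authors have in mind. Nothing is missing.
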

\noindent
\begin{remark} 
Let $f$ be an integrable function from $\mathbb{R}_+$ to $\mathbb{R}_+$. The martingale
$$\left( \mathbb{P} [G(A_{\infty})| \mathcal{F}_t] \right)_{t \geq 0}$$ admits a c\`adl\`ag version.
If it is denoted by $(G_t)_{t \geq 0}$, one has:
$$M_t(F) = G(A_t) - G_t + Y_t,$$
where $(Y_t)_{t \geq 0}$ is a c\`adl\`ag modification of $(f(A_t)X_t)_{t \geq 0}$, which then exists
 for any choice of $f$ (recall that $G(A_t)$ is continuous with respect
to $t$). If $f$ is bounded, one easily proves that $f(A_t)X_t$ is c\`adl\`ag with respect to $t$: in this 
case, $(Y_t)_{t \geq 0}$ is indistinguishable with $(f(A_t)X_t)_{t \geq 0}$. However, for unbounded $f$, 
the existence of $Y$ is not trivial. 
\end{remark}
\noindent
Another case for which one can give a simple expression for the martingale $(M_t(F))_{t \geq 0}$ is
the case where $(X_t)_{t \geq 0}$ is of class (D). More precisely, one has the 
following result:
\begin{proposition} \label{uniformlyintegrable}
Let us suppose that the assumptions of Theorem \ref{all} are satisfied, and that the process
 $(X_t)_{t \geq 0}$ is
of class (D). Then $X_t$ tends a.s. to a limit $X_{\infty}$ when $t$ goes to infinity, and 
the measure $\mathcal{Q}$ is absolutely continuous with respect to $\mathbb{P}$, with density $X_{\infty}$. 
Moreover, a nonnegative measurable functional $F$ is integrable with respect to $\mathcal{Q}$ if and only if $F X_{\infty}$ is
 integrable with respect to $\mathbb{P}$, and in this case, $(M_t(F))_{t \geq 0}$ is a c\`adl\`ag version 
(unique up to indistinguishability) of the 
conditional expectation $(\mathbb{P} [F X_{\infty} | \mathcal{F}_t])_{t \geq 0}$. In particaular, it 
is uniformly integrable, and it converges a.s. and  in 
$L^1$ to $F X_{\infty}$ when $t$ goes to infinity. 
\end{proposition}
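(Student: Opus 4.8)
The plan is to prove everything by first identifying $\mathcal{Q}$ explicitly as $X_\infty \cdot \mathbb{P}$, after which all the remaining assertions follow either from standard martingale theory or from the characterisations already available in Propositions \ref{f} and \ref{MtF}.

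First I would establish the convergence. Since $(X_t)_{t \geq 0}$ is a nonnegative submartingale of class (D), it is uniformly integrable, hence bounded in $L^1$, and Doob's submartingale convergence theorem gives an a.s. limit $X_\infty$, with convergence also in $L^1$ by uniform integrability. Writing $X_t = N_t + A_t$ with $N$ a genuine martingale and $A_0 = 0$, one has $\mathbb{P}[A_t] = \mathbb{P}[X_t] - \mathbb{P}[X_0] \to \mathbb{P}[X_\infty] - \mathbb{P}[X_0] < \infty$, so monotone convergence yields $\mathbb{P}[A_\infty] < \infty$ and in particular $A_\infty < \infty$ a.s.

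The core step is to show $\mathcal{Q} = X_\infty \cdot \mathbb{P}$. Fix $f(x) = e^{-x}$, which is strictly positive, bounded and integrable. I claim the martingale $M^f$ of Proposition \ref{f} is uniformly integrable: $G(A_t) \leq \|f\|_1$ is bounded, $\mathbb{P}[G(A_\infty)|\mathcal{F}_t]$ is a closed martingale, and $f(A_t) X_t \leq X_t$ is dominated by the uniformly integrable family $\{X_t\}$. Since $A_t \to A_\infty < \infty$ and $X_t \to X_\infty$ a.s. and $f$, $G$ are continuous, $M_t^f \to f(A_\infty) X_\infty =: M_\infty^f$, so $M_t^f = \mathbb{P}[M_\infty^f|\mathcal{F}_t]$. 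For every bounded $\mathcal{F}_t$-measurable $\Gamma_t$ this gives $\mathcal{M}^f[\Gamma_t] = \mathbb{P}[\Gamma_t M_t^f] = \mathbb{P}[\Gamma_t f(A_\infty) X_\infty]$, and since $\bigcup_t \mathcal{F}_t$ generates $\mathcal{F}$ we conclude $\mathcal{M}^f = f(A_\infty) X_\infty \cdot \mathbb{P}$. Comparing with the identity $\mathcal{M}^f = f(A_\infty)\cdot\mathcal{Q}$ of Proposition \ref{f} and using $f(A_\infty) = e^{-A_\infty} > 0$ a.s. then forces $\mathcal{Q} = X_\infty \cdot \mathbb{P}$.

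The remaining claims are then read off. The equivalence of integrability is immediate from $\mathcal{Q}[F] = \mathbb{P}[F X_\infty]$. For $\mathcal{Q}$-integrable nonnegative $F$ one has $\mathcal{M}^F = F\cdot\mathcal{Q} = F X_\infty \cdot \mathbb{P}$ with $F X_\infty \in L^1(\mathbb{P})$, so $\mathbb{P}[\Gamma_t M_t(F)] = \mathbb{P}[\Gamma_t F X_\infty] = \mathbb{P}[\Gamma_t \, \mathbb{P}[F X_\infty|\mathcal{F}_t]]$ for all bounded $\mathcal{F}_t$-measurable $\Gamma_t$; hence $M_t(F) = \mathbb{P}[F X_\infty|\mathcal{F}_t]$ a.s. for each $t$, and by the uniqueness up to indistinguishability in Proposition \ref{MtF}, $(M_t(F))_{t \geq 0}$ is the c\`adl\`ag version of this closed martingale. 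Being closed by $F X_\infty$ it is uniformly integrable, and since $F X_\infty$ is $\mathcal{F} = \bigvee_t\mathcal{F}_t$-measurable, L\'evy's upward theorem gives the a.s. and $L^1$ convergence to $F X_\infty$. The hard part is the core step: establishing the uniform integrability of $M^f$ and the a.s. identification of its terminal value, which is exactly what lets me bypass a direct verification of $\mathbb{P}[X_\infty \mathds{1}_{g \leq t}|\mathcal{F}_t] = X_t$, an identity entangling the future event $\{g \leq t\}$ with $\mathcal{F}_t$ that is awkward to check head-on.
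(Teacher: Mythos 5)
Your proof is correct, but the core step is handled by a genuinely different mechanism than the paper's. The paper verifies directly that $X_\infty\cdot\mathbb{P}$ satisfies the two defining properties of $\mathcal{Q}$ in Theorem \ref{all}: it introduces the return time $d_t:=\inf\{u>t,\ X_u=0\}$ (a stopping time by the d\'ebut theorem under natural conditions), observes that $\{g\leq t\}=\{d_t=\infty\}$ and $X_{d_t}=0$ on $\{d_t<\infty\}$, so that $X_\infty\mathds{1}_{g\leq t}=X_{d_t}$, and then computes $\mathbb{P}[X_{d_t}\,|\,\mathcal{F}_t]=\mathbb{P}[N_{d_t}+A_{d_t}\,|\,\mathcal{F}_t]=N_t+A_t=X_t$ by optional stopping for the uniformly integrable martingale $N$ and the fact that $A$ does not increase on $(t,d_t)$; uniqueness in Theorem \ref{all} then identifies $X_\infty\cdot\mathbb{P}$ with $\mathcal{Q}$. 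This is exactly the identity $\mathbb{P}[X_\infty\mathds{1}_{g\leq t}\,|\,\mathcal{F}_t]=X_t$ that you deliberately avoid. Instead, you show that for $f(x)=e^{-x}$ the martingale $M^f$ is uniformly integrable and closed by its a.s.\ limit $f(A_\infty)X_\infty$, so that $\mathcal{M}^f=f(A_\infty)X_\infty\cdot\mathbb{P}$, and then divide by $f(A_\infty)$ (legitimate since $A_\infty<\infty$ both $\mathbb{P}$-a.s., by class (D), and $\mathcal{Q}$-a.e.) in the identity $\mathcal{M}^f=f(A_\infty)\cdot\mathcal{Q}$ of Proposition \ref{f}. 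Your route trades the d\'ebut theorem and optional stopping for elementary uniform-integrability arguments, at the cost of leaning on the full strength of Proposition \ref{f}, which encodes the construction of $\mathcal{Q}$; the paper's route is more self-contained relative to the statement of Theorem \ref{all} and exposes the Az\'ema--Yor-type conditional identity that is conceptually central elsewhere in the paper. The remainder of your argument (equivalence of integrability, identification of $M_t(F)$ with a c\`adl\`ag version of $\mathbb{P}[FX_\infty\,|\,\mathcal{F}_t]$, and the a.s.\ and $L^1$ convergence via L\'evy's upward theorem) coincides with the paper's.
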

\begin{proof}
The equality $\mathcal{Q} = X_{\infty}. \mathbb{P}$ is  contained in \cite{NN1}, \cite{AMY} and
\cite{CNP}. Let us shortly reprove it here. Since $(X_t)_{t \geq 0}$ is of class (D), the 
expectation of $A_t$ is bounded, and then $A_{\infty}$ is
integrable, which implies that $(N_t)_{t \geq 0}$ is a uniformly integrable, c\`adl\`ag martingale. 
It admits an a.s. limit $N_{\infty}$ for $t$ going to infinity, and then $X_{\infty}$ is well-defined. 
Moreover, if $d_t$ is the infimum of $u > t$, such that $X_u =0$, by the version of the d\'ebut theorem given
 in \cite{NN2}, $d_t$ is a stopping time. Moreover, $d_t = \infty$ iff $g \leq t$, and by right-continuity of 
$X$, $X_{d_t}= 0$ for $d_t < \infty$. One deduces:
$$\mathbb{P} [X_{\infty} \mathds{1}_{g \leq t} | \mathcal{F}_t] = 
\mathbb{P} [X_{\infty} \mathds{1}_{d_t = \infty}| \mathcal{F}_t] = 
\mathbb{P} [X_{d_t} |\mathcal{F}_t].$$
Now, since $(N_t)_{t \geq 0}$ is a uniformly integrable, c\`adl\`ag martingale,
$$\mathbb{P} [X_{\infty} \mathds{1}_{g \leq t} |\mathcal{F}_t] 
= \mathbb{P} [N_{d_t} + A_{d_t} |\mathcal{F}_t] = N_t + A_t = X_t,$$
or equivalently, for all bounded, $\mathcal{F}_t$-measurable functionals $\Gamma_t$:
$$\mathbb{P} [\Gamma_t \, X_{\infty} \, \mathds{1}_{g \leq t} ] = \mathbb{P} [\Gamma_t \, X_t].$$
Moreover, under $X_{\infty}. \mathbb{P}$, $X_{\infty} >0$ almost everywhere and then $g < \infty$. 
One deduces that $X_{\infty}. \mathbb{P}$ is equal to $\mathcal{Q}$. 
For any nonnegative functional $F$, it is then trivial that $F$ is integrable with respect to $\mathcal{Q}$ iff
$FX_{\infty}$ is integrable with respect to $\mathbb{P}$, in this case, the finite measure $\mathcal{M}^F$ has 
density $F X_{\infty}$ with respect to $\mathbb{P}$. By taking the restriction to $\mathcal{F}_t$, one
deduces that the martingale $(M_t (F))_{t \geq 0}$ is a c\`adl\`ag version of the conditional expectation of 
$FX_{\infty}$. 
\end{proof}
\noindent
It is also possible to describe explicitly $\mathcal{Q}$ and $(M_t(F))_{t \geq 0}$ if $(X_t)_{t \geq 0}$ 
is a strictly positive martingale:
\begin{proposition} \label{mart}
Let us suppose that the assumptions of Theorem \ref{all} are satisfied, and that $\mathbb{P}$-almost 
surely, $(X_t)_{t \geq 0}$ 
does not vanish: in particular, $(A_t)_{t \geq 0}$ is indistinguishable from zero, and $(X_t)_{t \geq 0}$ is 
a martingale.  Then, $\mathcal{Q}$ is finite and it is the unique measure, such that 
for $t \geq 0$, the restriction of $\mathcal{Q}$ to $\mathcal{F}_t$ has density $X_t$ with respect to 
the restriction of $\mathbb{P}$ to $\mathcal{F}_t$. Moreover, for any nonnegative, $\mathcal{Q}$-integrable
functional $F$, the martingale $(M_t(F))_{t \geq 0}$ 
can be given by:
 $$M_t (F) = X_t \, \widetilde{\mathcal{Q}} [F \, | \mathcal{F}_t],$$ 
where $\widetilde{\mathcal{Q}} [F \, | \mathcal{F}_t]$ is a c\`adl\`ag version of the conditional expectation
of $F$ given $\mathcal{F}_t$, with respect to the probability measure $\widetilde{\mathcal{Q}}$ obtained 
by dividing $\mathcal{Q}$ by its total mass (different from zero). In particular, the 
functional identically equal to one is $\mathcal{Q}$-integrable and $(M_t(1))_{t \geq 0}$ is indistinguishable
 from $(X_t)_{t \geq 0}$.
\end{proposition}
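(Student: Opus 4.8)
The plan is to first reduce the setting to a genuine martingale, then read off both the finiteness and the density statement directly from the defining relation (\ref{nikomuk}) of $\mathcal{Q}$, and finally obtain the formula for $M_t(F)$ by a conditioning computation performed under the normalised measure $\widetilde{\mathcal{Q}}$.

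First I would observe that since $(X_t)_{t \geq 0}$ does not vanish, the set $\{t \geq 0 : X_t = 0\}$ is empty $\mathbb{P}$-almost surely, so the measure $(dA_t)$, which is carried by this set, is zero; thus $A$ is indistinguishable from $0$ and $X_t = N_t$ is a nonnegative martingale (with $g = 0$ on $\mathbb{P}$-almost every path). The quantitative heart of the argument is to take $F_t = 1$ in (\ref{nikomuk}), giving $\mathcal{Q}[g \leq t] = \mathbb{P}[X_t] = \mathbb{P}[X_0]$ for every $t$, the last equality because $X$ is a martingale. Letting $t \to \infty$, using $\{g \leq t\} \uparrow \{g < \infty\}$ together with $\mathcal{Q}[g = \infty] = 0$, I obtain $\mathcal{Q}[\Omega] = \mathcal{Q}[g < \infty] = \mathbb{P}[X_0] < \infty$, so $\mathcal{Q}$ is finite; its mass is nonzero because the non-vanishing of $X$ forces $X_0 > 0$ and hence $\mathbb{P}[X_0] > 0$. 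For each fixed $t$ the same identity yields $\mathcal{Q}[g > t] = \mathbb{P}[X_0] - \mathbb{P}[X_t] = 0$, i.e. $\mathds{1}_{g \leq t} = 1$ holds $\mathcal{Q}$-almost everywhere. Substituting this back into (\ref{nikomuk}) gives $\mathcal{Q}[F_t] = \mathbb{P}[F_t X_t]$ for all bounded $\mathcal{F}_t$-measurable $F_t$, which is precisely the statement that the restriction of $\mathcal{Q}$ to $\mathcal{F}_t$ has density $X_t$ with respect to that of $\mathbb{P}$; uniqueness then follows since $\mathcal{F} = \bigvee_t \mathcal{F}_t$ and a finite measure is determined by its restrictions to the $\mathcal{F}_t$ via a monotone-class argument.

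To establish the formula for $M_t(F)$, I would set $c := \mathcal{Q}[\Omega]$ and $\widetilde{\mathcal{Q}} := \mathcal{Q}/c$, and for bounded $\mathcal{F}_t$-measurable $\Gamma_t$ compute $\mathcal{M}^F[\Gamma_t] = \mathcal{Q}[F\Gamma_t] = c\,\widetilde{\mathcal{Q}}[\Gamma_t\,\widetilde{\mathcal{Q}}[F \mid \mathcal{F}_t]]$ by the tower property under $\widetilde{\mathcal{Q}}$, which is legitimate because $\Gamma_t$ is $\mathcal{F}_t$-measurable. Rewriting the right-hand side as $\mathcal{Q}[\Gamma_t\,\widetilde{\mathcal{Q}}[F \mid \mathcal{F}_t]]$ and applying the density relation on $\mathcal{F}_t$ just obtained gives $\mathcal{M}^F[\Gamma_t] = \mathbb{P}[\Gamma_t\,X_t\,\widetilde{\mathcal{Q}}[F \mid \mathcal{F}_t]]$. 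Since this holds for every $t$ and every $\Gamma_t$, the consistency of $\mathcal{M}^F$ across $t$ forces $X_t\,\widetilde{\mathcal{Q}}[F \mid \mathcal{F}_t]$ to be a $\mathbb{P}$-martingale, and by the characterisation in Proposition \ref{MtF} it is a version of $M_t(F)$.

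It remains to exhibit a càdlàg representative: since the natural conditions hold, I can choose a càdlàg version of the $\widetilde{\mathcal{Q}}$-martingale $(\widetilde{\mathcal{Q}}[F \mid \mathcal{F}_t])_{t \geq 0}$, and as $(X_t)_{t \geq 0}$ is already càdlàg the product is càdlàg, whence the uniqueness part of Proposition \ref{MtF} gives indistinguishability with $M_t(F)$. Taking $F \equiv 1$ (which is $\mathcal{Q}$-integrable as $\mathcal{Q}$ is finite) and using $\widetilde{\mathcal{Q}}[1 \mid \mathcal{F}_t] = 1$ yields $M_t(1) = X_t$, the final claim. I expect the only delicate point to be the interchange between conditioning under $\widetilde{\mathcal{Q}}$ and the change of measure to $\mathbb{P}$ on each $\mathcal{F}_t$: one must keep careful track of which measure governs each expectation and justify inserting the possibly unbounded variable $\widetilde{\mathcal{Q}}[F \mid \mathcal{F}_t]$ into the density relation by a standard monotone approximation; everything else is a direct reading of (\ref{nikomuk}).
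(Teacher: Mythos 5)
Your proof is correct, and its overall architecture matches the paper's: establish that $\{g\le t\}$ has full $\mathcal{Q}$-measure so that (\ref{nikomuk}) collapses to the density identity $\mathcal{Q}[\Gamma_t]=\mathbb{P}[\Gamma_t X_t]$ on $\mathcal{F}_t$, then obtain $M_t(F)=X_t\,\widetilde{\mathcal{Q}}[F\mid\mathcal{F}_t]$ by the tower property under $\widetilde{\mathcal{Q}}$ followed by the change of measure on $\mathcal{F}_t$; this second half is essentially identical to the paper's computation. Where you genuinely diverge is in the first step. The paper introduces the stopping time $T_0=\inf\{t\ge 0,\,X_t=0\}$ (a stopping time by the d\'ebut theorem under natural conditions), notes that $\{T_0>t\}\in\mathcal{F}_t$ is $\mathbb{P}$-almost sure, and transfers this to $\mathcal{Q}$ using the fact, drawn from the explicit construction of $\mathcal{Q}$ in \cite{NN1}, that $\mathcal{Q}$ is absolutely continuous with respect to a measure that is locally absolutely continuous with respect to $\mathbb{P}$. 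You instead argue purely from the axioms of Theorem \ref{all}: $\mathcal{Q}[g\le t]=\mathbb{P}[X_t]=\mathbb{P}[X_0]$ by the martingale property, whence $\mathcal{Q}[\Omega]=\mathcal{Q}[g<\infty]=\mathbb{P}[X_0]<\infty$ and then $\mathcal{Q}[g>t]=\mathcal{Q}[\Omega]-\mathcal{Q}[g\le t]=0$. This mass-balance argument is arguably cleaner: it needs neither the d\'ebut theorem nor any reference to how $\mathcal{Q}$ was built, and it delivers finiteness of $\mathcal{Q}$ and the identification of its total mass as $\mathbb{P}[X_0]$ as a by-product rather than as a separate observation. (Do note that the subtraction is only legitimate after finiteness is established, so the order of your two steps matters; you do get it right.)

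One small point you gloss over: to produce a c\`adl\`ag version of the $\widetilde{\mathcal{Q}}$-martingale $\bigl(\widetilde{\mathcal{Q}}[F\mid\mathcal{F}_t]\bigr)_{t\ge 0}$ you need the natural conditions to hold for $(\Omega,\mathcal{F},(\mathcal{F}_t)_{t\ge 0},\widetilde{\mathcal{Q}})$, not merely for $\mathbb{P}$. The paper fills this in explicitly: since $X_t>0$ $\mathbb{P}$-almost surely, the restriction of $\widetilde{\mathcal{Q}}$ to $\mathcal{F}_t$ is \emph{equivalent} (not just absolutely continuous) to that of $\mathbb{P}$, so the $\widetilde{\mathcal{Q}}$-negligible sets in $\mathcal{F}_t$ coincide with the $\mathbb{P}$-negligible ones and the natural conditions transfer. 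This is a one-line repair, not a real gap, but it belongs in a complete write-up.
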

\begin{proof}
Let $T_0 := \inf \{t \geq 0, X_t = 0\}$. By the d\'ebut theorem (under natural conditions), 
$T_0$ is an $(\mathcal{F}_t)_{t \geq 0}$-stopping time. By assumption, for all $t \geq 0$, 
the event $\{T_0 > t\}$, which is in $\mathcal{F}_t$, holds
 $\mathbb{P}$-almost surely. Now, by the construction of $\mathcal{Q}$ given in \cite{NN1} and 
described above, $\mathcal{Q}$ is absolutely
continuous with respect to a finite measure, which is locally absolutely continuous with respect to 
$\mathbb{P}$. One deduces that for all $t \geq 0$, the event $\{T_0 > t\}$ holds $\mathcal{Q}$-almost
 everywhere. Hence, $\mathcal{Q}$-almost everywhere, $T_0$ is infinite and $(X_t)_{t \geq 0}$ does not vanish, 
which implies
\begin{equation} 
\mathcal{Q} [\Gamma_t] = \mathbb{P} [\Gamma_t X_t]. \label{qq}
\end{equation}
By the monotone class theorem, $\mathcal{Q}$ is the unique measure satisfying (\ref{qq}): it is finite since 
$X_0$ is integrable, its total mass is different from zero since $X_0 > 0$. Hence, $\widetilde{\mathcal{Q}}$ 
is well-defined.
Moreover, if $F$ is integrable with respect to $\mathcal{Q}$, it is also
integrable with respect to $\widetilde{\mathcal{Q}}$, and the $\widetilde{\mathcal{Q}}$-martingale:
$$\left(\widetilde{\mathcal{Q}} [F \, | \mathcal{F}_t]\right)_{t \geq 0}$$
is well-defined and admits a c\`adl\`ag version $(Y_t)_{t \geq 0}$. Indeed, $(\Omega, \mathcal{F},
 (\mathcal{F}_t)_{t \geq 0},
\mathbb{P})$ satisfies the natural conditions, and then it is also the case for 
$(\Omega, \mathcal{F}, (\mathcal{F}_t)_{t \geq 0}, \widetilde{\mathcal{Q}} )$, since for all $t \geq 0$, 
the restriction of $\widetilde{\mathcal{Q}}$ to $\mathcal{F}_t$ is equivalent to 
the restriction of $\mathbb{P}$ (recall that $X_t > 0$, $\mathbb{P}$-almost surely). 
 Therefore, for all bounded, $\mathcal{F}_t$-measurable functionals $\Gamma_t$:
\begin{align*}
\mathcal{Q} [ F \, \Gamma_t] & = \mathcal{Q} [1] \, \widetilde{\mathcal{Q}} [F \, \Gamma_t]
\\ & = \mathcal{Q} [1] \, \widetilde{\mathcal{Q}} \left[\Gamma_t \,  \widetilde{\mathcal{Q}}
 [F \, | \mathcal{F}_t] \right] \\ & = \mathcal{Q}\left[\Gamma_t Y_t \right] \\ & =
 \mathbb{P} \left[ \Gamma_t X_t  Y_t \right]
\end{align*}
\noindent
Now, one has:
$$\mathcal{Q} [F \, \Gamma_t] = \mathcal{M}^F [\Gamma_t] = \mathbb{P} [\Gamma_t M_t(F)],$$
with the notation of Proposition \ref{MtF}. Hence, $(M_t(F))_{t \geq 0}$ is a modification 
of $(X_tY_t)_{t \geq 0}$, and since these two processes are c\`adl\`ag, they are indistinguishable. 
Moreover, the functional equal to one is $\mathcal{Q}$-integrable, since $\mathcal{Q}$ is finite. In
this case, one can take $Y_t=1$ for all $t \geq 0$, and $(M_t(F))_{t \geq 0}$ is 
indistinguishable from $(X_t)_{t \geq 0}$. 
\end{proof}
\noindent
After giving these simple examples for which one can explicitly compute $\mathcal{Q}$ and $M_t(F)$, it is
 natural to ask what happens in a more general situation. In Section \ref{sec::3}, we study the case where 
$A_{\infty} = \infty$, $\mathbb{P}$-almost surely (this case occurs, in particular, when $(X_t)_{t \geq 0}$
is a reflected Brownian motion). Unfortunately, we are not able to give explicit expressions for
the martingales of the form $(M_t(F))_{t \geq 0}$ in this case, but we 
obtain some information about their behaviour when $t$ goes to infinity. 
\section{The case $A_{\infty} = \infty$} \label{sec::3}
As it was proved in \cite{NN1}, the measure $\mathcal{Q}$ is infinite if one supposes that $A_{\infty} = \infty$,
$\mathbb{P}$-almost surely. More precisely, the image of $\mathcal{Q}$ by the functional $A_{\infty}$ is
the infinite measure: $\mathbb{P}[X_0] .\delta_0 + \lambda$, where $\lambda$ is Lebesgue measure 
on $\mathbb{R}_+$. Moreover, again for $A_{\infty} = \infty$, the martingale $(M_t(F))_{t \geq 0}$
tends $\mathbb{P}$-almost surely to zero for any $\mathcal{Q}$-integrable functional $F$. In particular, 
it cannot be uniformly integrable, except for $F=0$, $\mathcal{Q}$-almost everywhere. 
More precisely, one has the slightly more general result: 
\begin{proposition}
Let us suppose that the assumptions of Theorem \ref{all} are satisfied. Then on the set $\{A_{\infty} = 
\infty\}$, the martingale $M_t(F)$ tends $\mathbb{P}$-almost surely to zero when $t$ goes to infinity. 
\end{proposition}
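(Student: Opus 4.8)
The plan is to work with the almost-sure limit of the nonnegative martingale and to probe the defining identity of $M_t(F)$ against a family of $\mathcal{F}_t$-measurable events that exhaust the tail event $\{A_\infty = \infty\}$ from inside. By Proposition \ref{MtF}, $(M_t(F))_{t \geq 0}$ is a nonnegative c\`adl\`ag $\mathbb{P}$-martingale, so by the nonnegative (super)martingale convergence theorem it converges $\mathbb{P}$-almost surely to an a.s.\ finite, integrable limit $M_\infty(F) \geq 0$. Since $M_\infty(F) \geq 0$, the assertion is equivalent to the single scalar statement $\mathbb{P}[M_\infty(F)\,\mathds{1}_{A_\infty = \infty}] = 0$.

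The only information linking $M_t(F)$ to $\mathcal{Q}$ is the identity $\mathbb{P}[M_t(F)\,\Gamma_t] = \mathcal{M}^F[\Gamma_t] = \mathcal{Q}[F\,\Gamma_t]$, valid for bounded $\mathcal{F}_t$-measurable $\Gamma_t$. The difficulty is that $\{A_\infty = \infty\}$ is a genuine tail event, lying in no $\mathcal{F}_t$, so it cannot be inserted directly as a $\Gamma_t$. To bridge this gap I would introduce, for each $a > 0$, the stopping time $T_a := \inf\{t \geq 0 : A_t > a\}$ (indeed a stopping time, by the d\'ebut theorem under the natural conditions), and observe that since $A$ is continuous and increasing one has $\{T_a < \infty\} = \{A_\infty > a\}$, while for each $t$ the event $\{T_a \leq t\}$ belongs to $\mathcal{F}_t$ and increases to $\{T_a < \infty\}$ as $t \to \infty$.

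Taking $\Gamma_t = \mathds{1}_{T_a \leq t}$ in the defining identity gives $\mathbb{P}[M_t(F)\,\mathds{1}_{T_a \leq t}] = \mathcal{M}^F[T_a \leq t]$. Letting $t \to \infty$, the left-hand integrand converges $\mathbb{P}$-a.s.\ to $M_\infty(F)\,\mathds{1}_{T_a < \infty}$, and Fatou's lemma yields $\mathbb{P}[M_\infty(F)\,\mathds{1}_{A_\infty > a}] \leq \mathcal{M}^F[A_\infty > a]$, the right-hand side being the monotone limit of $\mathcal{M}^F[T_a \leq t]$. Since $\{A_\infty = \infty\} \subset \{A_\infty > a\}$ and $M_\infty(F) \geq 0$, this bounds $\mathbb{P}[M_\infty(F)\,\mathds{1}_{A_\infty = \infty}]$ by $\mathcal{M}^F[A_\infty > a]$ for every $a$. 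Finally, because $\mathcal{Q}[A_\infty = \infty] = 0$ (as recalled at the beginning of Section \ref{sec::2}) and $\mathcal{M}^F = F.\mathcal{Q}$ is finite, the quantity $\mathcal{M}^F[A_\infty > a]$ decreases to $\mathcal{M}^F[A_\infty = \infty] = 0$ as $a \to \infty$, forcing $\mathbb{P}[M_\infty(F)\,\mathds{1}_{A_\infty = \infty}] = 0$ and hence $M_t(F) \to 0$ $\mathbb{P}$-a.s.\ on $\{A_\infty = \infty\}$.

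The \emph{main obstacle} is exactly the mismatch just described: the event one wishes to control is not adapted, whereas the correspondence between $M_t(F)$ and $\mathcal{Q}$ is only available against $\mathcal{F}_t$-measurable functionals. The resolution hinges on two checks — choosing the exhausting stopping times $T_a$ so that $\{T_a < \infty\}$ is \emph{exactly} $\{A_\infty > a\}$, and verifying that Fatou's lemma delivers the inequality in the useful direction, i.e.\ an upper bound on the limiting expectation. Everything else is routine; the structural fact that ultimately makes the martingale collapse on $\{A_\infty = \infty\}$ is simply that $A_\infty$ is finite $\mathcal{Q}$-almost everywhere.
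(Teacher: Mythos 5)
Your argument is correct and is essentially the paper's own proof: both probe the identity $\mathbb{P}[M_\cdot(F)\,\Gamma_t]=\mathcal{M}^F[\Gamma_t]$ against $\mathcal{F}_t$-measurable events exhausting $\{A_\infty>a\}$, apply Fatou's lemma to the a.s.\ limit $M_\infty(F)$ to get $\mathbb{P}[M_\infty(F)\,\mathds{1}_{A_\infty>a}]\leq\mathcal{M}^F[A_\infty>a]$, and let $a\to\infty$ using the finiteness of $A_\infty$ under $\mathcal{M}^F$. The only (immaterial) difference is that the paper uses the events $\{A_t>u\}$ with three successive limits ($v\to\infty$, then $t\to\infty$, then $u\to\infty$) where you use $\{T_a\leq t\}$ and merge the first two; also, adaptedness of $A$ already puts $\{A_t>a\}$ in $\mathcal{F}_t$, so invoking the d\'ebut theorem is unnecessary.
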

\begin{proof}
Let us use the notation of Proposition \ref{MtF}. 
For all $u >0$, $v \geq t > 0$:
$$\mathcal{M}^F [A_t > u] = \mathbb{P} [M_v (F) \, \mathds{1}_{A_t > u}].$$
Moreover, $\mathbb{P}$-almost surely:
$$M_v (F) \, \mathds{1}_{A_t>u} \, \underset{v \rightarrow \infty}{\longrightarrow} \, 
M_{\infty} (F) \, \mathds{1}_{A_t > u},$$
where $M_{\infty} (F)$ is the a.s. limit of $M_t(F)$ for $t$ going to infinity. 
By Fatou's lemma, one deduces:
$$\mathbb{P} [M_{\infty}(F) \, \mathds{1}_{A_t > u}] \leq \mathcal{M}^F [A_t > u]
\leq \mathcal{M}^F [A_{\infty} > u].$$
Now, $M_{\infty}(F) \, \mathds{1}_{A_{\infty} > u }$ is the almost sure limit of
$M_{\infty} (F) \, \mathds{1}_{A_t > u}$. Since $M_{\infty} (F)$ is integrable by Fatou's lemma, 
one has, by dominated convergence:
$$\mathbb{P} [M_{\infty}(F) \, \mathds{1}_{A_{\infty} > u }] \leq \mathcal{M}^F [A_{\infty} > u].$$
By taking $u$ going to infinity, we are done, since $A_{\infty}$ is finite 
$\mathcal{M}^F$-almost everywhere.
\end{proof}
\noindent
Once the behaviour of $(M_t(F))_{t \geq 0}$ under $\mathbb{P}$ is known, it is natural to ask
what happens under $\mathcal{Q}$. The following result implies that the behaviour of 
$(M_t(F))_{t \geq 0}$ is not the same. Moreover, it gives some information about the behaviour of 
$(X_t)_{t \geq 0}$ under $\mathcal{Q}$: 
\begin{proposition} \label{ntmq}
Let us suppose that the assumptions of Theorem \ref{all} are satisfied, and that
 $A_{\infty} = \infty$, $\mathbb{P}$-almost surely. Then $\mathcal{Q}$-almost everywhere, $X_t$ tends 
to infinity with $t$, and
$$\frac{M_t(F)}{X_t} \underset{t \rightarrow \infty}{\longrightarrow} F$$
for all nonnegative, $\mathcal{Q}$-integrable functionals $F$. 
\end{proposition}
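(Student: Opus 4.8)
The plan is to establish the two claims — that $X_t \to \infty$ holds $\mathcal{Q}$-almost everywhere, and that $M_t(F)/X_t \to F$ holds $\mathcal{Q}$-a.e. — by exploiting the relation $\mathcal{M}^F = F.\mathcal{Q}$ together with the change-of-measure identity $\mathcal{M}^F[\Gamma_t] = \mathbb{P}[\Gamma_t M_t(F)]$ from Proposition \ref{MtF}. The central idea is that under $\mathcal{Q}$ (suitably localized to finite measures via $\mathcal{M}^f = f(A_\infty).\mathcal{Q}$), the process $X$ is no longer "almost a martingale" that can return to zero, but rather — since $\mathcal{Q}[g=\infty]=0$ forces $g < \infty$ $\mathcal{Q}$-a.e. — the process stays away from zero after its last visit $g$, and the hypothesis $A_\infty = \infty$ $\mathbb{P}$-a.s. should transfer under the change of measure into a genuine divergence of $X$. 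First I would reduce to a finite measure by working under $\mathcal{M}^f$ for a fixed strictly positive integrable $f$ (e.g. $f(x)=e^{-x}$), so that $\mathcal{Q}$-a.e. statements on the set $\{A_\infty < \infty\}$ (which is $\mathcal{Q}$-a.e. all of the space, since $\mathcal{Q}[A_\infty = \infty]=0$) can be tested against the finite measure $\mathcal{M}^f$.

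For the first assertion, the key is to analyze $X_t$ after the last zero $g$. Since $g<\infty$ $\mathcal{Q}$-a.e., on $\{t > g\}$ the process $X$ does not touch zero and $A_t = A_\infty$ is frozen; thus for $t>g$ one has $X_t = N_t + A_\infty$, where $N$ is the martingale part. I would then want to argue that, because $A_\infty = \infty$ $\mathbb{P}$-a.s. translates (via $\mathcal{Q} \ll \mathcal{M}^f$ locally absolutely continuous to $\mathbb{P}$ on each $\mathcal{F}_t$) into $A_t \to \infty$ along the $\mathcal{Q}$-trajectory, the frozen value $A_\infty$ seen from the $\mathcal{Q}$ side is in fact large; more carefully, one uses that the image of $\mathcal{Q}$ under $A_\infty$ is $\mathbb{P}[X_0].\delta_0 + \lambda$ (stated at the start of Section \ref{sec::3}), so $A_\infty$ is $\mathcal{Q}$-a.e. finite but unbounded, and couples this with the fact that after $g$ the martingale contribution $N_t$ does not drag $X_t$ back down — indeed $X_t \ge 0$ always and $X$ avoids $0$ past $g$. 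I expect the clean route is to show $\mathcal{Q}[\inf_{t} X_t \text{ small after a large time}]$ vanishes by testing against $\mathcal{M}^f$ and invoking the $\mathbb{P}$-a.s. divergence of $A_\infty$ inside the expectation $\mathbb{P}[\Gamma_t M_t(F)]$.

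For the second assertion, the natural approach is a martingale/Radon–Nikodym argument under $\mathcal{Q}$. Writing $\widetilde{\mathcal{Q}}$-type conditional expectations is not directly available since $\mathcal{Q}$ is infinite, so instead I would localize: under the finite measure $\mathcal{M}^f = f(A_\infty).\mathcal{Q}$, the ratio $M_t(F)/X_t$ should be identified with a conditional expectation of $F$ adjusted by the density $f(A_\infty)$, and one shows it converges by a martingale convergence theorem applied under $\mathcal{M}^f$ (a finite measure to which the space's natural conditions pass, as in the proof of Proposition \ref{mart}). Concretely, I would verify that for bounded $\mathcal{F}_t$-measurable $\Gamma_t$ the identity $\mathcal{M}^F[\Gamma_t] = \mathbb{P}[\Gamma_t M_t(F)]$ can be rewritten, using $\mathcal{Q}[\Gamma_t \mathds{1}_{g\le t}] = \mathbb{P}[\Gamma_t X_t]$ from Theorem \ref{all}, as a statement comparing $M_t(F)/X_t$ to the $\mathcal{Q}$-conditional average of $F$ on $\{g \le t\}$; then as $t\to\infty$ the set $\{g\le t\}$ exhausts the space $\mathcal{Q}$-a.e. and the conditioning becomes trivial, yielding the limit $F$.

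The main obstacle will be the second step: making rigorous the passage from the global infinite measure $\mathcal{Q}$ to convergence of the ratio $M_t(F)/X_t$, since standard martingale convergence is a $\mathbb{P}$- or finite-measure statement while the target convergence is $\mathcal{Q}$-a.e. I expect the crux is to set up the correct finite measure (via $f(A_\infty)$) under which $M_t(F)/X_t$ becomes a genuine uniformly integrable or at least convergent martingale, and to control the boundary behavior at the last zero $g$ — in particular ensuring that the denominator $X_t$, which diverges by the first part, grows in a way compatible with the numerator so that the ratio stabilizes at exactly $F$ rather than at some rescaled version of it.
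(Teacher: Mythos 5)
Your overall strategy---localizing the infinite measure $\mathcal{Q}$ through a finite measure $\mathcal{M}^f=f(A_\infty).\mathcal{Q}$ with $f(x)=e^{-x}$ and then invoking martingale convergence under that finite measure---is exactly the paper's, but your execution of both halves has genuine gaps. For the first assertion, your proposed mechanism does not work: the statement that ``the frozen value $A_\infty$ seen from the $\mathcal{Q}$ side is in fact large'' is false (the image of $\mathcal{Q}$ under $A_\infty$ is $\mathbb{P}[X_0]\,\delta_0+\lambda$, so $A_\infty$ takes every finite value and is not large), and the observation that $X$ avoids zero after $g$ gives no divergence whatsoever---a nonnegative process can stay away from zero without tending to infinity. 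The argument you are missing is the following: with $H:=e^{-A_\infty}$ one has the explicit formula $M_t(H)=e^{-A_t}(1+X_t)$ (because $\mathbb{P}[e^{-A_\infty}\,|\,\mathcal{F}_t]=0$ when $A_\infty=\infty$ $\mathbb{P}$-a.s.), so the density of $\mathbb{P}$ with respect to $\mathcal{M}^H$ on $\mathcal{F}_t$ is $Y_t=e^{A_t}/(1+X_t)$, a nonnegative $\widetilde{\mathcal{M}}^H$-martingale; a Fatou argument using $\mathbb{P}[A_t\le u]\to 0$ shows $Y_\infty=0$ $\mathcal{M}^H$-a.e. (the two measures concentrate on the disjoint events $\{A_\infty=\infty\}$ and $\{A_\infty<\infty\}$), and since $e^{A_t}$ has a finite positive limit $\mathcal{Q}$-a.e., this forces $X_t\to\infty$.

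For the second assertion you correctly sense the danger that the limit could be ``some rescaled version'' of $F$, but you do not resolve it, and the object you propose to identify as a conditional expectation is the wrong one: $M_t(F)/X_t$ is not a conditional expectation under any of the finite measures in play, because $X_t$ is not the local density of $\mathcal{M}^H$ with respect to $\mathbb{P}$---$M_t(H)=e^{-A_t}(1+X_t)$ is. The correct identification is $M_t(F)/M_t(H)=\widetilde{\mathcal{M}}^H\left[F/H\,|\,\mathcal{F}_t\right]$, which converges $\mathcal{Q}$-a.e. to $F/H$ by closed-martingale convergence; one then needs the separate factor $M_t(H)/X_t=e^{-A_t}(1+1/X_t)\to e^{-A_\infty}=H$, which uses the first assertion, and the product gives $(F/H)\cdot H=F$. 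Without the explicit formula for $M_t(H)$ both the proof of divergence and the cancellation of the density $H$ are missing, so the proposal as written cannot be completed along the lines you describe.
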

\noindent
\begin{remark}
As we have seen in Proposition \ref{MtF}, two versions of $(M_t(F))_{t \geq 0}$ are indistinguishable
with respect to $\mathbb{P}$. Since $\mathcal{Q}$ is absolutely continuous with respect to a finite measure
which is locally absolutely continuous with respect to $\mathbb{P}$, the two versions are also 
indistinguishable with respect to $\mathcal{Q}$. Hence, $(M_t(F))_{t \geq 0}$ can be considered to be 
well-defined for all the problems concerning its behaviour under the measure $\mathcal{Q}$. 
 \end{remark}
\begin{proof}
The functional $H := e^{-A_{\infty}}$ is $\mathcal{Q}$-integrable and one has:
$$M_t(H) = e^{-A_t} (1+X_t)$$
(recall that $\mathbb{P}$-almost surely, $e^{-A_{\infty}} = 0$, since $A_{\infty} = \infty$).
One deduces, for all bounded, $\mathcal{F}_t$-measurable random variables $\Gamma_t$:
$$\mathcal{M}^H [\Gamma_t] = \mathbb{P} \left[e^{-A_t} (1+X_t) \, \Gamma_t \right].$$
This implies:
$$\mathbb{P} [\Gamma_t] = \mathcal{M}^H [Y_t \Gamma_t],$$
where 
$$Y_t = \frac{e^{A_t}} {1+ X_t}.$$
Now, $(Y_t)_{t \geq 0}$ is a nonnegative, c\`adl\`ag martingale, with respect to the probability measure 
$\widetilde{\mathcal{M}}^H := \mathcal{M}^H/ \mathcal{M}^H (1)$, and then, converges $\mathcal{M}^H$-almost everywhere to 
a limit random variable $Y_{\infty}$. Now, since for all $u >0$, $v \geq t >0$:
$$\mathbb{P} [A_t \leq u] = \mathcal{M}^H [Y_v \, \mathds{1}_{A_t \leq u} ],$$
one has, by taking $v \rightarrow \infty$ and by using Fatou's lemma:
$$\mathcal{M}^H [Y_{\infty} \, \mathds{1}_{A_t \leq u} ] \leq \mathbb{P} [A_t \leq u],$$
which implies
$$\mathcal{M}^H [Y_{\infty} \, \mathds{1}_{A_{\infty} \leq u}] \leq 
\mathbb{P} [A_t \leq u].$$
Now, since $A_{\infty} = \infty$, $\mathbb{P}$-almost surely, one has
$$\mathbb{P} [A_t \leq u]  \underset{t \rightarrow \infty}{\longrightarrow} 0.$$
Hence, 
$$\mathcal{M}^H [Y_{\infty} \, \mathds{1}_{A_{\infty} \leq u}] = 0,$$
and finally (by taking $u$ going to infinity):
$$\mathcal{M}^H [Y_{\infty} \, \mathds{1}_{A_{\infty} < \infty}] = 0.$$
Since $A_{\infty} < \infty$, $\mathcal{Q}$-almost everywhere, $Y_{\infty} = 0$, $\mathcal{M}^H$-almost
everywhere, which implies that $X_t$ tends to infinity with $t$. 
On the other hand, for all nonnegative, integrable 
functionals $F$, and for all bounded, $\mathcal{F}_t$-measurable functionals $\Gamma_t$, one has:
\begin{align*}
\mathcal{M}^H \left[\Gamma_t \, \frac{M_t(F)}{M_t(H)} \right] 
= \mathcal{Q} \left[\Gamma_t \, H \, \frac{M_t(F)}{M_t(H)} \right] & 
= \mathbb{P} \left[\Gamma_t \, M_t(H) \, \frac{M_t(F)}{M_t(H)} \right]
 = \mathbb{P} [\Gamma_t \, M_t(F) ] \\ & = \mathcal{Q} [\Gamma_t \, F] \\
& = \mathcal{M}^H  \left[ \Gamma_t \, \frac{F}{H} \right] 
 = \mathcal{M}^H \left[ \Gamma_t \, \widetilde{\mathcal{M}}^H \left[ \frac{F}{H} \, | \mathcal{F}_t \right] \right]. 
\end{align*}
\noindent
Note that all the equalities above are meaningful since $M_t(H)$ and $H$ never vanish. Therefore, for all 
$t \geq 0$, one has almost surely:
$$\frac{M_t(F)}{M_t(H)} = \widetilde{\mathcal{M}}^H \left[ \frac{F}{H} \, | \mathcal{F}_t \right],$$
which implies that 
$$\frac{M_t(F)}{M_t(H)} \underset{t \rightarrow \infty}{\longrightarrow} \frac{F}{H},$$
 $\widetilde{\mathcal{M}}^H$-almost surely, and then, $\mathcal{Q}$-almost everywhere. 
Now, since $X_t \rightarrow \infty$, $\mathcal{Q}$-almost everywhere, $X_t >0$ for $t$ large enough and:
$$\frac{M_t(H)}{X_t} = e^{-A_t} \left(1 + \frac{1}{X_t}\right) \underset{t \rightarrow \infty}{\longrightarrow}
e^{-A_{\infty}}.$$
One deduces:
$$\frac{M_t(F)}{X_t} = \frac{M_t(F)}{M_t(H)} \, \frac{M_t(H)}{X_t} 
\underset{t \rightarrow \infty}{\longrightarrow} \, \frac{F}{H} \, e^{-A_{\infty}} = F.$$
\end{proof}
\noindent
In the case where $(X_t)_{t \geq 0}$ is a reflected Brownian motion, Proposition \ref{ntmq} is essentially proved
in \cite{NRY} and when $X_t$ is a symmetric $\alpha$-stable process of index $\alpha\in(1,2)$, it is proved in \cite{YYY}. In the  particular case of the reflected Brownian motion, the measure $\mathcal{Q}$ is
 strongly related to the last passage time at any level and not only at zero. 
This relation can be generalized as follows:
\begin{proposition} \label{xta}
Let us suppose that the assumptions of Theorem \ref{all} are satisfied, and that the submartingale 
$(X_t)_{t \geq 0}$ has only positive jumps and that $A_{\infty} = \infty$ almost surely 
under $\mathbb{P}$. For $a \geq 0$, let $g^{[a]}$ be the last hitting time of the 
interval $[0,a]$:
$$g^{[a]} = \sup \{t \geq 0, X_t \leq a \}.$$
Then  for all $t \geq 0$, and for all
$\mathcal{F}_t$-measurable, bounded variables $\Gamma_t$, the measure $\mathcal{Q}$ satisfies
\begin{equation}\mathcal{Q} \left[ \Gamma_t \, \mathds{1}_{g^{[a]} \leq t} \right] = \mathbb{P} 
\left[\Gamma_t (X_t-a)_+ \right]. \label{Qa}
\end{equation}
\noindent
Moreover, $((X_t-a)_+)_{t \geq 0}$ is a submartingale of class $(\Sigma)$ and the $\sigma$-finite measure 
obtained by applying 
Theorem \ref{all} to it is equal to $\mathcal{Q}$. 
\end{proposition}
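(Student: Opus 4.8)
The plan is to reduce the statement to three ingredients and then close the loop with the uniqueness in Theorem \ref{all}: (i) that $Y:=\big((X_t-a)_+\big)_{t\ge0}$ is itself of class $(\Sigma)$; (ii) that the \emph{original} measure $\mathcal{Q}$ already satisfies the identity (\ref{Qa}); and (iii) that $\mathcal{Q}[g^{[a]}=\infty]=0$. Granting these, the last zero of $Y$ is exactly $g^{[a]}$ (since $(X_t-a)_+=0\Leftrightarrow X_t\le a$), so the measure $\mathcal{Q}_a$ attached to $Y$ by Theorem \ref{all} is characterized by $\mathcal{Q}_a[g^{[a]}=\infty]=0$ together with (\ref{Qa}); by (ii)--(iii) the measure $\mathcal{Q}$ meets both characterizing conditions, and uniqueness forces $\mathcal{Q}_a=\mathcal{Q}$, which is the final assertion.

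First I would establish the class $(\Sigma)$ property. As $x\mapsto(x-a)_+$ is convex, increasing and $(X_t-a)_+\le X_t$ is integrable, $Y$ is a submartingale, and its decomposition comes from the Meyer--It\^o (Tanaka) formula applied to $X=N+A$:
$$(X_t-a)_+=(X_0-a)_++\int_0^t\mathds{1}_{X_{s-}>a}\,dX_s+\tfrac12 L_t^a+J_t,$$
where $L^a$ is the semimartingale local time at $a$ and $J_t=\sum_{0<s\le t}(X_s-a)_+\mathds{1}_{X_{s-}\le a}$ collects the upward crossings of $a$ (here I use that, the jumps being positive, the bracketed jump terms vanish when $X_{s-}>a$). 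Because $dA$ is carried by $\{X=0\}$ and a positive jump forces $X_{s-}=0$ whenever $X_s=0$, one gets $\int_0^t\mathds{1}_{X_{s-}>a}\,dA_s=0$, so the finite-variation drift only comes from $\tfrac12 L^a$ and the compensator $J^p$ of $J$. The decisive point is that a local martingale with only nonnegative jumps cannot jump at a predictable time (a nonnegative jump with vanishing $\mathcal{F}_{\tau-}$-conditional expectation is a.s.\ zero), so $N$ is quasi-left-continuous and $J^p$ is \emph{continuous}; hence $\tilde A:=\tfrac12 L^a+J^p$ is a continuous increasing process with $d\tilde A$ carried by $\{X\le a\}=\{Y=0\}$, while $\tilde N:=Y-\tilde A$ is a c\`adl\`ag local martingale. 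This realizes $Y$ as a process of class $(\Sigma)$. I expect this continuity statement to be the main obstacle, and positivity of the jumps is precisely what makes it work.

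Next I would prove (\ref{Qa}) for $\mathcal{Q}$ by optional stopping of the martingales of Proposition \ref{f}. Fix $f>0$ integrable with $G(x)=\int_x^\infty f$; since $A_\infty=\infty$ a.s., $\mathbb{P}[G(A_\infty)\mid\mathcal{F}_t]=0$ and $M_t^f=G(A_t)+f(A_t)X_t$, with $\mathcal{M}^f=f(A_\infty).\mathcal{Q}$ and, for any stopping time $T$ and $\Lambda\in\mathcal{F}_T$, $\mathcal{M}^f[\Lambda]=\mathbb{P}[\mathds{1}_\Lambda M_T^f\mathds{1}_{T<\infty}]$ (valid for bounded $T$ by optional sampling, then extended by monotone convergence, all terms being nonnegative). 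I take $T=d:=\inf\{u\ge t:X_u\le a\}$, a stopping time by the d\'ebut theorem, which is $\mathbb{P}$-a.s.\ finite since $A_\infty=\infty$ makes $\{X=0\}$ unbounded. On $\{X_t>a\}$ the positivity of the jumps forces $X_d=a$, and since $X>a\ge0$ on $[t,d)$ with $A$ continuous one has $A_d=A_t$, whence $M_d^f=G(A_t)+a\,f(A_t)$ there. Comparing the values at $t$ and at $d$ and using $\mathcal{M}^f[\Gamma_t\mathds{1}_{X_t>a}]=\mathbb{P}[\Gamma_t\mathds{1}_{X_t>a}M_t^f]$ gives
$$\mathcal{M}^f[\Gamma_t\mathds{1}_{X_t>a}\mathds{1}_{d=\infty}]=\mathbb{P}[\Gamma_t\,f(A_t)(X_t-a)_+].$$
Now $\{X_t>a,\,d=\infty\}=\{g^{[a]}<t\}$, on which $A$ is frozen after the last visit to $[0,a]$, so $A_t=A_\infty$; dividing by $f(A_\infty)=f(A_t)$ to pass from $\mathcal{M}^f$ to $\mathcal{Q}$ yields $\mathcal{Q}[\Gamma_t\mathds{1}_{g^{[a]}<t}]=\mathbb{P}[\Gamma_t(X_t-a)_+]$. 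I would then upgrade $<$ to $\le$ by letting $t'\downarrow t$ in this identity written at level $t'$ with the same $\mathcal{F}_t$-measurable $\Gamma_t$, using right-continuity of $X$ and dominated convergence, which is exactly (\ref{Qa}).

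Finally, Proposition \ref{ntmq} (applicable to $X$ because $A_\infty=\infty$) gives $X_t\to\infty$ $\mathcal{Q}$-a.e., hence $X_t>a$ eventually and $g^{[a]}<\infty$ $\mathcal{Q}$-a.e., i.e.\ $\mathcal{Q}[g^{[a]}=\infty]=0$. Thus $\mathcal{Q}$ satisfies both conditions that characterize the measure associated by Theorem \ref{all} with the class-$(\Sigma)$ process $Y$, and the uniqueness in that theorem identifies that measure with $\mathcal{Q}$, completing the proof.
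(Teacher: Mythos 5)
Your proof is correct in substance, and it reaches the same three checkpoints as the paper (the identity (\ref{Qa}), the class $(\Sigma)$ property of $((X_t-a)_+)_{t\ge0}$, and $\mathcal{Q}[g^{[a]}=\infty]=0$ followed by the uniqueness in Theorem \ref{all}), but it gets to the first two by a genuinely different route. For (\ref{Qa}), the paper never leaves the measure $\mathcal{Q}$: it applies the defining relation $\mathcal{Q}[\,\cdot\,\mathds{1}_{g\le u}]=\mathbb{P}[\,\cdot\,X_u]$ to the $\mathcal{F}_u$-measurable weight $\Gamma_t\mathds{1}_{d^{[a]}_t>u}$, does the optional stopping by hand on the martingale part $N$ of $X=N+A$ at $d^{[a]}_t\wedge u$ (using $X_{d^{[a]}_t}=a$ from the positivity of the jumps and the fact that $A$ is frozen on $[t,d^{[a]}_t]$), and lets $u\to\infty$. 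You instead stop the auxiliary martingale $M^f$ under the finite measure $\mathcal{M}^f=f(A_\infty).\mathcal{Q}$ at the return time $d$ and then divide by $f(A_\infty)=f(A_t)$ on $\{d=\infty\}$; this buys a cleaner bookkeeping (everything is phrased for a single finite measure) at the cost of an extra limiting step $t'\downarrow t$ to recover the closed event $\{g^{[a]}\le t\}$, a step the paper avoids because its limit is taken in $u$ rather than in $t$. For the class $(\Sigma)$ property the paper simply cites Lemma 2.1 of \cite{CNP}, whereas you reprove it via the Meyer--It\^o formula and the quasi-left-continuity of a local martingale with nonnegative jumps; this is essentially the content of that lemma and is a legitimate, self-contained alternative.

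Two points deserve tightening. First, your stated stopping identity $\mathcal{M}^f[\Lambda]=\mathbb{P}[\mathds{1}_\Lambda M_T^f\mathds{1}_{T<\infty}]$ is false as written (take $T=\infty$, or any a.s.\ finite $T$ at which $M^f$ is not closed); the correct form is $\mathcal{M}^f[\Lambda\cap\{T<\infty\}]=\mathbb{P}[\mathds{1}_\Lambda M_T^f\mathds{1}_{T<\infty}]$ for $\Lambda\in\mathcal{F}_T$, and it is precisely this version that your subsequent subtraction $\mathcal{M}^f[\cdots\mathds{1}_{d=\infty}]=\mathcal{M}^f[\cdots]-\mathcal{M}^f[\cdots\mathds{1}_{d<\infty}]$ uses, so the computation survives once the statement is corrected. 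Second, in the limit $t'\downarrow t$ the convergence $\mathbb{P}[\Gamma_t(X_{t'}-a)_+]\to\mathbb{P}[\Gamma_t(X_t-a)_+]$ is not a plain dominated-convergence argument (there is no integrable dominating bound for $\sup_{t'}X_{t'}$ in general); it should be justified by the backward submartingale convergence $\mathbb{P}[(X_{t'}-a)_+\mid\mathcal{F}_t]\downarrow(X_t-a)_+$, which holds because $((X_t-a)_+)_{t\ge0}$ is a right-continuous submartingale under a right-continuous filtration. With these repairs the argument is complete.
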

\begin{proof}
Let:
$$d^{[a]}_t := \inf \{v > t, X_v \leq a \}. $$
By the d\'ebut theorem (for natural conditions), $d^{[a]}_t$ is a stopping time.
Now, for all $u > t$:
$$\mathcal{Q} \left[\Gamma_t \, \mathds{1}_{g \leq u, d^{[a]}_t > u} \right] = 
\mathbb{P} \left[\Gamma_t \, \mathds{1}_{d^{[a]}_t > u} X_u \right].$$
One deduces, by using the decomposition of the submartingale $(X_t)_{t \geq 0}$, and 
by applying martingale property to $(N_t)_{t \geq 0}$:
\begin{align*}
\mathcal{Q} \left[\Gamma_t \, \mathds{1}_{g \leq u, d^{[a]}_t > u} \right] &  = 
\mathbb{P} \left[\Gamma_t \, \mathds{1}_{d^{[a]}_t > u} (N_u + A_u) \right] \\ 
&  = \mathbb{P} \left[\Gamma_t \, \mathds{1}_{d^{[a]}_t > u} (N_u + A_t) \right] \\
& = \mathbb{P} \left[\Gamma_t \, \mathds{1}_{d^{[a]}_t > u} A_t \right] \\ & \; 
+ \mathbb{P} \left[\Gamma_t \, N_u \right] - \mathbb{P} \left[\Gamma_t \, \mathds{1}_{d^{[a]}_t \leq u}  N_u
\right] \\ & =  \mathbb{P} \left[\Gamma_t \, \mathds{1}_{d^{[a]}_t > u} A_t \right] \\ & \;  
+ \mathbb{P} \left[\Gamma_t \, N_t \right]  - \mathbb{P} \left[\Gamma_t \, \mathds{1}_{d^{[a]}_t \leq u}
  N_{d^{[a]}_t} \right] \\ 
& = \mathbb{P} \left[\Gamma_t \, \mathds{1}_{d^{[a]}_t > u} A_t \right] 
+ \mathbb{P} \left[\Gamma_t \, N_t \right]  \\ \; & -  \mathbb{P} \left[\Gamma_t \, \mathds{1}_{d^{[a]}_t \leq u}
  X_{d^{[a]}_t} \right] +  \mathbb{P} \left[\Gamma_t \, \mathds{1}_{d^{[a]}_t \leq u}
  A_t \right] \\ & = \mathbb{P} \left[\Gamma_t X_t \right] 
- \mathbb{P} \left[\Gamma_t \, \mathds{1}_{d^{[a]}_t \leq u}
  X_{d^{[a]}_t} \right].
\end{align*}
\noindent
Now, by right continuity, $d^{[a]}_t = t$ if $X_t < a$, and since $X$ has only positive jumps, for $X_t \geq a$ and $d^{[a]}_t < \infty$,
$X_{d^{[a]}_t} = a$. One deduces that
$$\mathcal{Q} \left[\Gamma_t \, \mathds{1}_{g \leq u, d^{[a]}_t > u} \right] 
= \mathbb{P} \left[\Gamma_t X_t \right] - \mathbb{P} \left[\Gamma_t \, \mathds{1}_{d^{[a]}_t \leq u}
  (X_t \wedge a) \right].$$
When $u$ tends to infinity, the event $\{g \leq u, d^{[a]}_t  > u \}$ tends to the  
event $\{g^{[a]} \leq t\}$.
Moreover, the event $\{d^{[a]}_t \leq u\}$ tends to 
$\{d^{[a]}_t < \infty\}$, which is almost sure under $\mathbb{P}$, since $A_{\infty} = \infty$. 
One deduces:
$$\mathcal{Q} \left[\Gamma_t \mathds{1}_{g^{[a]} \leq t} \right]  = 
\mathbb{P} \left[\Gamma_t X_t \right] - \mathbb{P} \left[\Gamma_t (X_t \wedge a) \right]
= \mathbb{P} \left[\Gamma_t (X_t-a)_+ \right].$$
Now, from  Lemma 2.1 in \cite{CNP}, $((X_t-a)_+)_{t \geq 0}$ is also nonnegative submartingale of class $(\Sigma)$. The supremum of its hitting times
of zero is $g^{[a]}$. The formula \eqref{Qa} and the fact that $\{g^{[a]} < \infty \}$ holds
$\mathcal{Q}$-almost everywhere (recall that $X_t \rightarrow \infty$ when $t \rightarrow \infty$, 
since $A_{\infty} = \infty$, $\mathbb{P}$-almost surely), imply that 
$\mathcal{Q}$ is the $\sigma$-finite measure obtained from the submartingale $((X_t-a)_+)_{t \geq 0}$. 
\end{proof}
\noindent
In their study of Brownian penalisations, Najnudel, Roynette and Yor (\cite{NRY}) introduce
a particular class of nonnegative processes which converge $\mathcal{Q}$-almost everywhere to 
a $\mathcal{Q}$-integrable functional. Let us state a similar definition in our general 
framework:
\begin{definition}
Let us suppose that the assumptions of Theorem \ref{all} are satisfied. We say that 
a process $(F_t)_{t \geq 0}$ belongs to the class (C) if it is nonnegative, uniformly bounded,
nonincreasing, c\`adl\`ag and adapted with 
respect to $(\mathcal{F}_t)_{t \geq 0}$, if there exists $a > 0$ such that for all 
$t \geq 0$, $F_t = F_{g^{[a]}}$ on the set $\{t \geq g^{[a]}\}$, and if its decreasing limit at infinity,
denoted $F_{\infty}$, is $\mathcal{Q}$-integrable.
\end{definition}
\noindent
For example, the process $(F_t)_{t \geq 0}$ given by $$F_t=\varphi(A_t),$$where $\varphi: \mathbb{R}_+\to\mathbb{R}_+$ is integrable and decreasing,  is in the class (C), as well as 
$$F_t :=  \exp \left( - \lambda A_t - \int_0^t q(X_s) ds \right),$$
where $\lambda  > 0$ and where $q$ is a measurable function from 
$\mathbb{R}_+$ to $\mathbb{R}_+$, with compact support.
When a process $(F_t)_{t \geq 0}$ is in the class (C), the following proposition gives the behaviour of 
$\mathbb{P} [F_tX_t]$ when $t$ goes to infinity.
\begin{proposition}\label{nardine2}
Let us suppose that the assumptions of Theorem \ref{all} are satisfied, and that $A_{\infty}= \infty$, 
$\mathbb{P}$-almost surely. Let $(F_t)_{t \geq 0}$ be a process in the class (C).
Then, if $F_g$ is integrable with respect to $\mathcal{Q}$, one has:
$$\mathbb{P} [F_tX_t ] \underset{t \rightarrow \infty}{\longrightarrow} \mathcal{Q} [F_{\infty}]. $$
\end{proposition}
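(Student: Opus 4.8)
The plan is to transfer the whole computation from $\mathbb{P}$ to $\mathcal{Q}$ and then invoke dominated convergence. First I would observe that each $F_t$ is bounded and $\mathcal{F}_t$-measurable, so the defining relation \eqref{nikomuk} of Theorem \ref{all} applies directly with $\Gamma_t = F_t$, giving
\[ \mathbb{P}[F_t X_t] = \mathcal{Q}[F_t \, \mathds{1}_{g \leq t}]. \]
It therefore suffices to show that $\mathcal{Q}[F_t \, \mathds{1}_{g \leq t}] \to \mathcal{Q}[F_\infty]$ as $t \to \infty$, and the point is that under $\mathcal{Q}$ the integrand is very well behaved.

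Next I would identify the $\mathcal{Q}$-almost everywhere pointwise limit of the integrand. Since $\mathcal{Q}[g = \infty] = 0$, one has $g < \infty$ $\mathcal{Q}$-a.e., so $\mathds{1}_{g \leq t} \to 1$. By Proposition \ref{ntmq}, $X_t \to \infty$ $\mathcal{Q}$-almost everywhere, hence $g^{[a]} = \sup\{t : X_t \leq a\}$ is finite $\mathcal{Q}$-a.e. The class (C) property $F_t = F_{g^{[a]}}$ on $\{t \geq g^{[a]}\}$ then shows that $F$ is eventually constant, so its decreasing limit satisfies $F_\infty = F_{g^{[a]}}$ and $F_t = F_\infty$ for every $t \geq g^{[a]}$. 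Consequently, $\mathcal{Q}$-a.e.\ the product $F_t \, \mathds{1}_{g \leq t}$ is eventually equal to $F_\infty$, and in particular converges to $F_\infty$.

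The remaining ingredient is a $\mathcal{Q}$-integrable dominating function. On $\{g \leq t\}$ one has $t \geq g$, and since $(F_t)_{t \geq 0}$ is nonincreasing this gives $F_t \leq F_g$; off that event the integrand vanishes, so $F_t \, \mathds{1}_{g \leq t} \leq F_g$ holds $\mathcal{Q}$-a.e.\ (recall $g < \infty$ $\mathcal{Q}$-a.e., so $F_g$ is well-defined). Here the extra hypothesis that $F_g$ is $\mathcal{Q}$-integrable is exactly what is required: because $g \leq g^{[a]}$ we have $F_g \geq F_{g^{[a]}} = F_\infty$, so the $\mathcal{Q}$-integrability of $F_\infty$ built into class (C) does not by itself dominate $F_t \, \mathds{1}_{g \leq t}$. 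With $F_g$ as dominating function, the dominated convergence theorem applied to $\mathcal{Q}$ yields $\mathcal{Q}[F_t \, \mathds{1}_{g \leq t}] \to \mathcal{Q}[F_\infty]$, which is the assertion.

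The main obstacle I anticipate is purely that all of the convergence and domination must be justified under $\mathcal{Q}$ rather than under $\mathbb{P}$ (where, by the preceding propositions, the picture is completely different and $X_t$ does not blow up). The decisive input for this is Proposition \ref{ntmq}: it supplies $X_t \to \infty$ $\mathcal{Q}$-almost everywhere, which simultaneously forces $g^{[a]} < \infty$ and pins down the pointwise $\mathcal{Q}$-limit of $F_t \, \mathds{1}_{g \leq t}$. Once that is in place, the argument reduces to the defining identity of $\mathcal{Q}$, the monotonicity of $(F_t)_{t \geq 0}$, and a single application of dominated convergence.
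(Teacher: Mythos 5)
Your argument is correct and follows essentially the same route as the paper: transfer $\mathbb{P}[F_tX_t]$ to $\mathcal{Q}[F_t\,\mathds{1}_{g\leq t}]$ via the defining identity, use $X_t\to\infty$ $\mathcal{Q}$-almost everywhere to get $g^{[a]}<\infty$ and hence identify the pointwise limit $F_\infty$ from the class (C) structure, and conclude by dominated convergence with $F_g$ as dominating function. The only (immaterial) difference is that the paper splits the integral into the piece $\{g^{[a]}\leq t\}$, where $F_t=F_\infty$ and monotone convergence suffices, and the remainder $\{g\leq t<g^{[a]}\}$, where it applies domination by $F_g$, whereas you dominate the whole integrand at once.
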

\begin{proof}
It is sufficient to prove:
$$\mathcal{Q} [F_t \mathds{1}_{g \leq t} ] \underset{t \rightarrow \infty}
{\longrightarrow} \mathcal{Q} [F_{\infty}].$$
Now, since the set $\{g^{[a]}\leq t\}$ is included in the set $\{g \leq t\}$, one can write:
$$\mathcal{Q} [F_t \mathds{1}_{g \leq t}] = \mathcal{Q} [F_t \mathds{1}_{g^{[a]} \leq t} ] 
+ \mathcal{Q} [F_t \mathds{1}_{g \leq t < g^{[a]}}].$$
Moreover:
$$\mathcal{Q} [F_t \mathds{1}_{g^{[a]} \leq t}] = \mathcal{Q} [F_{\infty} \mathds{1}_{g^{[a]} \leq t}]
\underset{t \rightarrow \infty}{\longrightarrow} \mathcal{Q} [F_{\infty} \mathds{1}_{g^{[a]}< \infty}] = 
\mathcal{Q}[F_{\infty}].$$
The last equality is due to the fact that in the case where $A_{\infty} = \infty$, $\mathbb{P}$-almost surely, 
the process $(X_t)_{t \geq 0}$ tends $\mathcal{Q}$-almost everywhere to infinity with $t$. 
Hence, it is sufficient to prove that
$$\mathcal{Q} [F_t \mathds{1}_{g \leq t < g^{[a]}}] \underset{t \rightarrow \infty}{\longrightarrow} 
0.$$
Now, $F_t \mathds{1}_{g \leq t < g^{[a]}}$ is dominated by $F_g$, integrable with respect to $\mathcal{Q}$,
and tends to zero, $\mathcal{Q}$-almost surely, for $t$ going to infinity. By dominated convergence, we are done.
\end{proof}
\begin{remark}
Let $(X_t)_{t \geq 0}$ be the absolute value of the Wiener process and let  $F_t:=\exp(-\lambda L_t)$, where
 $L_t$ is the local time of $(X_t)_{t \geq 0}$ at level $0$. The process $(F_t)_{t \geq 0}$ is in the
 class (C) and it is known (see \cite{NN1}) that $L_\infty$ follows the Lebesgue measure on
 $\mathbb{R}_{+}$ under $\mathcal{Q}$. Consequently, 
$$ \mathbb{P}[\exp(-\lambda L_t) X_t] \underset{t \rightarrow \infty}{\longrightarrow} 1/\lambda,$$
 although 
$$\exp(-\lambda L_t) X_t \underset{t \rightarrow \infty}{\longrightarrow} 0,$$
 $\mathbb{P}$-almost surely. Of course, due to the general feature of our results, the same result holds if one replaces $X_t$ by $|Y_t|^{\alpha-1}$, where $Y$ is a symmetric $\alpha$-stable L\'evy process with index $\alpha\in(1,2)$, and $L_t$ would then stand for the local time of $Y$.
\end{remark}
\noindent
Here is another version of the same result (which does not involve the class (C)) and which is in fact more powerful and useful:
\begin{proposition}\label{nardine1}
Let us suppose that the assumptions of Theorem \ref{all} are satisfied and that $A_{\infty}= \infty$, 
$\mathbb{P}$-almost surely. Let $(F_t)_{t \geq 0}$ be a c\`adl\`ag,
 adapted, nonnegative process such that its limit $F_{\infty}$ exists $\mathcal{Q}$-almost 
everywhere. We suppose that there exists a $\mathcal{Q}$-integrable, nonnegative 
functional $H$, such that for all $t \geq 0$, one has:
$$F_t X_t \leq M_t(H)$$
$\mathbb{P}$-almost surely. Then, $F_{\infty}$ is $\mathcal{Q}$-integrable and:
$$\mathbb{P} [F_tX_t ] \underset{t \rightarrow \infty}{\longrightarrow} \mathcal{Q} [F_{\infty}]. $$
\end{proposition}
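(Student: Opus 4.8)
The plan is to exploit the domination hypothesis to replace the $\sigma$-finite, typically $\mathbb{P}$-singular measure $\mathcal{Q}$ by the \emph{finite} measure $\mathcal{M}^H = H.\mathcal{Q}$, on which the relevant quantities become bounded. Set
$$\Psi_t := \frac{F_t X_t}{M_t(H)}\,\mathds{1}_{\{M_t(H) > 0\}}.$$
Each $\Psi_t$ is $\mathcal{F}_t$-measurable and, because $F_t X_t \leq M_t(H)$ $\mathbb{P}$-a.s. (so that $F_t X_t = 0$ wherever $M_t(H) = 0$), one has $0 \leq \Psi_t \leq 1$ and $\Psi_t\, M_t(H) = F_t X_t$ $\mathbb{P}$-a.s. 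Applying Proposition \ref{MtF} with the bounded $\mathcal{F}_t$-measurable functional $\Psi_t$, I would write
$$\mathbb{P}[F_t X_t] = \mathbb{P}[\Psi_t\, M_t(H)] = \mathcal{M}^H[\Psi_t] = \mathcal{Q}[H]\, \widetilde{\mathcal{M}}^H[\Psi_t],$$
where $\widetilde{\mathcal{M}}^H := \mathcal{M}^H/\mathcal{Q}[H]$ is a genuine probability measure (note $\mathcal{Q}[H] = \mathbb{P}[M_t(H)] < \infty$). This is the crucial step: the integrand is now bounded by $1$ and is integrated against a probability measure.

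Next I would identify the almost-everywhere limit of $\Psi_t$ and invoke bounded convergence. Since $A_{\infty} = \infty$ $\mathbb{P}$-a.s., Proposition \ref{ntmq} gives, $\mathcal{Q}$-a.e., both $X_t \to \infty$ and $M_t(H)/X_t \to H$; together with the hypothesis $F_t \to F_{\infty}$ $\mathcal{Q}$-a.e., this yields, on the set $\{H > 0\}$,
$$\Psi_t = \frac{F_t}{M_t(H)/X_t} \underset{t\to\infty}{\longrightarrow} \frac{F_{\infty}}{H}.$$
As $\widetilde{\mathcal{M}}^H$ is absolutely continuous with respect to $\mathcal{Q}$ and assigns no mass to $\{H = 0\}$, this convergence holds $\widetilde{\mathcal{M}}^H$-a.e. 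Bounded convergence under the probability $\widetilde{\mathcal{M}}^H$ then gives
$$\mathbb{P}[F_t X_t] = \mathcal{Q}[H]\,\widetilde{\mathcal{M}}^H[\Psi_t] \underset{t\to\infty}{\longrightarrow} \mathcal{Q}[H]\,\widetilde{\mathcal{M}}^H\!\left[\frac{F_{\infty}}{H}\right] = \mathcal{M}^H\!\left[\frac{F_{\infty}}{H}\right] = \mathcal{Q}[F_{\infty}\,\mathds{1}_{\{H>0\}}].$$

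It then remains to show $\mathcal{Q}[F_{\infty}\,\mathds{1}_{\{H>0\}}] = \mathcal{Q}[F_{\infty}]$ and that $F_\infty$ is $\mathcal{Q}$-integrable; both follow from the single estimate $F_{\infty} \leq H$ $\mathcal{Q}$-a.e. To obtain it I would transfer the $\mathbb{P}$-a.s. domination to $\mathcal{Q}$: for each rational $t$ the event $\{F_t X_t > M_t(H)\}$ lies in $\mathcal{F}_t$ and is $\mathbb{P}$-negligible, so the fundamental relation \eqref{nikomuk} gives $\mathcal{Q}[\{F_t X_t > M_t(H)\}\cap\{g \leq t\}] = \mathbb{P}[\mathds{1}_{\{F_t X_t > M_t(H)\}}\, X_t] = 0$. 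Taking the countable union over rational $t$ and using $\mathcal{Q}[g = \infty] = 0$, one finds that $\mathcal{Q}$-a.e. $\omega$ satisfies $F_t X_t \leq M_t(H)$ for all large rational $t$; dividing by $X_t > 0$ and letting $t \to \infty$ (again via Proposition \ref{ntmq}) yields $F_{\infty} \leq H$ $\mathcal{Q}$-a.e. In particular $F_{\infty}$ is $\mathcal{Q}$-integrable and vanishes $\mathcal{Q}$-a.e. on $\{H = 0\}$, which closes the argument. I expect the main obstacle to be precisely this last transfer: the domination is assumed only $\mathbb{P}$-a.s., whereas $\mathcal{Q}$ is (typically) singular with respect to $\mathbb{P}$, so one must route the inequality through the events $\{g \leq t\}$, on which $\mathds{1}_{\{g\le t\}}.\mathcal{Q}$ and $X_t.\mathbb{P}$ coincide, and handle with care the set $\{H = 0\}$ to which the normalization $\widetilde{\mathcal{M}}^H$ is blind.
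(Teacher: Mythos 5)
Your proof is correct and follows essentially the same route as the paper's: rewrite $\mathbb{P}[F_tX_t]$ as the integral of the ratio $F_tX_t/M_t(H)$ against the finite measure $\mathcal{M}^H = H.\mathcal{Q}$, identify its $\mathcal{Q}$-almost everywhere limit $F_\infty/H$ via Proposition \ref{ntmq}, and conclude by bounded (resp.\ dominated) convergence. The only difference is cosmetic: the paper sidesteps the sets $\{M_t(H)=0\}$ and $\{H=0\}$ by replacing $H$ with $H+e^{-A_{\infty}}$, whereas you handle them with an indicator together with the separately derived inequality $F_\infty \leq H$, $\mathcal{Q}$-almost everywhere, which also gives you the integrability of $F_\infty$.
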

\begin{proof}
For all $t \geq 0$, one has $\mathcal{Q}$-almost everywhere
\begin{equation} \label{fxm}
F_t X_t \leq M_t(H).
\end{equation}
\noindent
Indeed, the event $\{F_t X_t > M_t(H)\}$ is $\mathcal{F}_t$-measurable and $\mathbb{P}$-negligible, and 
then, $\mathcal{Q}$-negligible. One deduces that $\mathbb{P}$-almost surely and $\mathcal{Q}$-almost
 everywhere, \eqref{fxm}
is satisfied for all rationals $t \geq 0$, and then for all $t \geq 0$, since $(F_t)_{t \geq 0}$, 
$(M_t(H))_{t \geq 0}$ and $(X_t)_{t \geq 0}$ are c\`adl\`ag.  
By adding $e^{-A_{\infty}}$ to $H$, one can now suppose that $H > 0$ and $M_t(H) >0$ for all $t$. 
Hence:
$$\mathbb{P} [F_tX_t] = \mathcal{M}^{H} \left[ \frac{F_tX_t}{M_t(H)} \right].$$
Now, one has, uniformly in $t$:
$$\frac{F_tX_t}{M_t(H)} \leq 1 + \infty . \mathds{1}_{\exists t \geq 0, F_t X_t > M_t(H) },$$
which is $\mathcal{M}^H$-integrable, since $\mathcal{M}^H$ is a finite measure and 
the event $\{\exists t \geq 0, F_t X_t > M_t(H) \}$ is $\mathcal{Q}$, and then $\mathcal{M}^H$-negligible. 
In particular:
$$\mathbb{P} [F_t X_t] \leq \mathcal{M}^H \left[1 + \infty . \mathds{1}_{\exists t \geq 0, F_t X_t > M_t(H) } 
\right] = \mathcal{Q} [H] < \infty.$$
Moreover, $\mathcal{Q}$-almost everywhere:
$$\frac{F_tX_t}{M_t(H)} \underset{t \rightarrow \infty}{\longrightarrow} \frac{F_{\infty}}{H}.$$
By dominated convergence:
$$\mathbb{P} [F_tX_t] \underset{t \rightarrow \infty}{\longrightarrow} \mathcal{M}^H 
\left[\frac{F_{\infty}}{H} \right] = \mathcal{Q} [F_{\infty}].$$
Since for all $t \geq 0$, $$\mathbb{P} [F_tX_t] \leq \mathcal{Q}[H],$$
one deduces that $$\mathcal{Q} [F_{\infty}] \leq \mathcal{Q} [H] < \infty.$$
\end{proof}
\noindent
We now illustrate how the above result can be used. Let $f:\mathbb{R}_+\to\mathbb{R}_+$ be an integrable function. From the identity (\ref{MTF2}) defining the
 martingale $(M_t^f)_{t \geq 0}$, and using the fact that $A_\infty=\infty$, $\mathbb{P}$-almost surely, we have that
$$f(A_t)X_t\leq M_t^f.$$ Consequently the above Proposition applies to the case $F_t=f(A_t)$, with
 $f:\mathbb{R}_+\to\mathbb{R}_+$ an integrable function.  It also obviously applies to any 
function $F_t$ which satisfying $F_t\leq f(A_t)$, for some integrable  $f:\mathbb{R}_+\to\mathbb{R}_+$. For 
instance, the result would apply to any $F_t=G_t f(A_t)$ where $G_t$ is a bounded 
c\`adl\`ag $\mathcal{F}_t$-measurable process and $f:\mathbb{R}_+\to\mathbb{R}_+$ is integrable; in particular if $F_t=f(A_t)\exp \left(- \int_0^t q(X_s) ds \right),$
where  $q$ is a measurable function from 
$\mathbb{R}_+$ to $\mathbb{R}_+$, then the above proposition applies.

We are now able to state two universal  penalisation results:
\begin{proposition}
Let us suppose that the assumptions of Theorem \ref{all} are satisfied, and that $A_{\infty}= \infty$, 
$\mathbb{P}$-almost surely. Let $(F_t)_{t \geq 0}$ be a process in the class (C)
such that $F_g$ is integrable with respect to $\mathcal{Q}$ and $F_{\infty}$ is not
$\mathcal{Q}$-almost everywhere equal to zero. Then, for $t$ sufficiently large,
$0<\mathbb{P}[F_tX_t]<\infty$, and one can define a measure $\mathbb{Q}_t$ by
$$\mathbb{Q}_t=\dfrac{F_t X_t}{\mathbb{P}[F_tX_t]}. \mathbb{P}.$$ Moreover, there exists a probability 
measure $\mathbb{Q}_{\infty}$ which can be considered as the weak limit of $\mathbb{Q}_t$ 
when $t$ goes to infinity, in the following sense: for all $s \geq 0$ and for all events 
$\Lambda_s \in \mathcal{F}_s$,
$$\mathbb{Q}_t [\Lambda_s] \underset{t \rightarrow \infty}{\longrightarrow} \mathbb{Q}_{\infty} [\Lambda_s].$$
The measure $\mathbb{Q}_{\infty}$ is absolutely continuous with respect to $\mathcal{Q}$:
$$\mathbb{Q}_\infty=\dfrac{F_\infty}{\mathcal{Q}[F_\infty]}.\mathcal{Q},$$
where $0 < \mathcal{Q}[F_{\infty}] < \infty$.
\end{proposition}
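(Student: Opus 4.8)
The plan is to fix $s \geq 0$ and an event $\Lambda_s \in \mathcal{F}_s$, write $\mathbb{Q}_t[\Lambda_s]$ as a ratio, and compute the limits of its numerator and denominator separately. First I would settle well-definedness: since $(F_t)_{t \geq 0}$ is uniformly bounded, say by $C$, and $X_t$ is integrable (Theorem \ref{all}), one has $\mathbb{P}[F_t X_t] \leq C\,\mathbb{P}[X_t] < \infty$ for every $t$; and Proposition \ref{nardine2} gives $\mathbb{P}[F_t X_t] \to \mathcal{Q}[F_\infty]$, which is strictly positive because $F_\infty \geq 0$ is not $\mathcal{Q}$-almost everywhere zero. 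Hence $0 < \mathbb{P}[F_t X_t] < \infty$ for $t$ large, so $\mathbb{Q}_t$ is a well-defined probability measure for such $t$.

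The denominator is then handled by Proposition \ref{nardine2}, namely $\mathbb{P}[F_t X_t] \to \mathcal{Q}[F_\infty]$. The real work is the numerator $\mathbb{P}[\mathds{1}_{\Lambda_s} F_t X_t]$ for $t \geq s$. Since $\Lambda_s \in \mathcal{F}_s \subseteq \mathcal{F}_t$, the variable $\mathds{1}_{\Lambda_s} F_t$ is bounded and $\mathcal{F}_t$-measurable, so the defining relation (\ref{nikomuk}) applies and yields
\[
\mathbb{P}[\mathds{1}_{\Lambda_s} F_t X_t] = \mathcal{Q}[\mathds{1}_{\Lambda_s} F_t \mathds{1}_{g \leq t}].
\]
I would then mimic the proof of Proposition \ref{nardine2}, but I cannot invoke it as a black box: inserting the $\mathcal{F}_s$-factor destroys both adaptedness for small $t$ and the stabilization identity $F_t = F_{g^{[a]}}$ on $\{t \geq g^{[a]}\}$, so $\mathds{1}_{\Lambda_s} F_t$ is not in the class (C). Instead I redo the estimate by hand, splitting along the set $\{g^{[a]} \leq t\} \subseteq \{g \leq t\}$:
\[
\mathcal{Q}[\mathds{1}_{\Lambda_s} F_t \mathds{1}_{g \leq t}] = \mathcal{Q}[\mathds{1}_{\Lambda_s} F_t \mathds{1}_{g^{[a]} \leq t}] + \mathcal{Q}[\mathds{1}_{\Lambda_s} F_t \mathds{1}_{g \leq t < g^{[a]}}].
\]

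On the set $\{g^{[a]} \leq t\}$ the stabilization property gives $F_t = F_{g^{[a]}} = F_\infty$ $\mathcal{Q}$-almost everywhere (using $g^{[a]} < \infty$ $\mathcal{Q}$-a.e., which holds since $X_t \to \infty$ $\mathcal{Q}$-a.e. by Proposition \ref{ntmq}); hence the first term equals $\mathcal{Q}[\mathds{1}_{\Lambda_s} F_\infty \mathds{1}_{g^{[a]} \leq t}]$, which increases to $\mathcal{Q}[\mathds{1}_{\Lambda_s} F_\infty]$ by monotone convergence, as $\mathds{1}_{g^{[a]} \leq t} \uparrow 1$ $\mathcal{Q}$-a.e. and $\mathds{1}_{\Lambda_s} F_\infty \leq F_\infty$ is $\mathcal{Q}$-integrable. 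The second term is dominated by $F_t \mathds{1}_{g \leq t < g^{[a]}} \leq F_g$, which is $\mathcal{Q}$-integrable, and tends to $0$ $\mathcal{Q}$-a.e. (again because $g^{[a]} < \infty$ $\mathcal{Q}$-a.e. forces $\mathds{1}_{g \leq t < g^{[a]}} \to 0$), so by dominated convergence it vanishes. Therefore $\mathbb{P}[\mathds{1}_{\Lambda_s} F_t X_t] \to \mathcal{Q}[\mathds{1}_{\Lambda_s} F_\infty]$.

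Combining the two limits gives
\[
\mathbb{Q}_t[\Lambda_s] \underset{t \rightarrow \infty}{\longrightarrow} \frac{\mathcal{Q}[\mathds{1}_{\Lambda_s} F_\infty]}{\mathcal{Q}[F_\infty]} = \mathbb{Q}_\infty[\Lambda_s],
\]
where $\mathbb{Q}_\infty := \frac{F_\infty}{\mathcal{Q}[F_\infty]} \cdot \mathcal{Q}$; this is a probability measure since $\mathbb{Q}_\infty[\Omega] = \mathcal{Q}[F_\infty]/\mathcal{Q}[F_\infty] = 1$. Finally $0 < \mathcal{Q}[F_\infty]$ by the non-degeneracy hypothesis, and $\mathcal{Q}[F_\infty] \leq \mathcal{Q}[F_g] < \infty$ since $F_\infty \leq F_g$ by monotonicity (equivalently, directly from the class (C) requirement that $F_\infty$ be $\mathcal{Q}$-integrable). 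The step I expect to cost the most care is the numerator: one must transfer the $\mathbb{P}$-statements (adaptedness, boundedness, and the stabilization identity) into $\mathcal{Q}$-a.e. statements, using that $\mathcal{Q}$ is absolutely continuous with respect to a measure locally absolutely continuous with respect to $\mathbb{P}$, and then re-run the class (C) argument for the non-class-(C) integrand $\mathds{1}_{\Lambda_s} F_t$, which is precisely why I replace the black-box use of Proposition \ref{nardine2} by the explicit splitting above.
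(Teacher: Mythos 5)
Your proof is correct and takes essentially the same route as the paper's: the denominator is handled by Proposition \ref{nardine2}, and the numerator $\mathbb{P}[\mathds{1}_{\Lambda_s}F_tX_t]=\mathcal{Q}[\mathds{1}_{\Lambda_s}F_t\mathds{1}_{g\leq t}]$ is treated by re-running the splitting along $\{g^{[a]}\leq t\}$ from the proof of that proposition, which is precisely what the paper compresses into the phrase ``by the arguments in the proof of Proposition \ref{nardine2}''. Your explicit justification of why $\mathds{1}_{\Lambda_s}F_t$ is not itself in class (C) and why the argument nevertheless goes through is simply a fleshed-out version of that remark.
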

\begin{proof}
The integrability of $F_{\infty}$ under $\mathcal{Q}$ is an immediate consequence of the integrability 
of $F_g$, and $\mathcal{Q}[F_{\infty}] > 0$ because $F_{\infty}$ is not $\mathcal{Q}$-almost everywhere 
equal to zero. Moreover, for all $t \geq 0$, $F_t$ is uniformly bounded and $X_t$ is $\mathbb{P}$-integrable, 
which implies that $\mathbb{P} [F_tX_t]$ is finite. On the other hand, by Proposition \ref{nardine2}, 
\begin{equation}
\mathbb{P} [F_tX_t ] \underset{t \rightarrow 
\infty}{\longrightarrow} \mathcal{Q} [F_{\infty}]  >0,  \label{chu}
\end{equation} 
and then $\mathbb{P} [F_tX_t ] > 0$ for $t$ large enough. Now, one has, 
for $t >s$:
 $$\mathbb{P}[F_t \mathds{1}_{\Lambda_s} X_t ]=\mathcal{Q}[F_t \mathds{1}_{\Lambda_s} \mathds{1}_{g \leq t}],$$
 where, by the arguments in the proof of Proposition \ref{nardine2},
 $$\mathcal{Q} [ F_t \mathds{1}_{\Lambda_s} \mathds{1}_{g \leq t} ] \underset{t \rightarrow \infty}
{\longrightarrow} \mathcal{Q} [F_{\infty}\mathds{1}_{\Lambda_s}].$$Combining this with (\ref{chu})
 completes the proof of the proposition.
\end{proof}
\begin{proposition}
Let us suppose that the assumptions of Theorem \ref{all} are satisfied and that $A_{\infty}= \infty$, 
$\mathbb{P}$-almost surely. Let $(F_t)_{t \geq 0}$ be a c\`adl\`ag,
 adapted, nonnegative process such that its limit $F_{\infty}$ exists $\mathcal{Q}$-almost 
everywhere and is not $\mathcal{Q}$-almost everywhere equal to zero. We suppose that there exists
 a $\mathcal{Q}$-integrable, nonnegative functional $H$, such that for all $t \geq 0$, one has:
$$F_t X_t \leq M_t(H)$$
$\mathbb{P}$-almost surely. Then, for $t$ sufficiently large, 
$0<\mathbb{P}[F_tX_t]<\infty$ and one can define a measure $\mathbb{Q}_t$ by
$$\mathbb{Q}_t=\dfrac{F_t X_t}{\mathbb{P}[F_tX_t]}. \mathbb{P}.$$ Moreover, there exists a probability 
measure $\mathbb{Q}_{\infty}$ which can be considered as the weak limit of $\mathbb{Q}_t$ 
when $t$ goes to infinity, in the following sense: for all $s \geq 0$ and for all events 
$\Lambda_s \in \mathcal{F}_s$,
$$\mathbb{Q}_t [\Lambda_s] \underset{t \rightarrow \infty}{\longrightarrow}
 \mathbb{Q}_{\infty} [\Lambda_s].$$ The measure $\mathbb{Q}_{\infty}$ is 
absolutely continuous with $\mathcal{Q}$:
$$\mathbb{Q}_\infty=\dfrac{F_\infty}{\mathcal{Q}[F_\infty]}.\mathcal{Q},$$
where $0 < \mathcal{Q}[F_{\infty}] < \infty$. 
\end{proposition}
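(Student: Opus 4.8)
The plan is to reduce everything to Proposition \ref{nardine1}, which already does the heavy lifting: the present statement is its penalisation counterpart, exactly as the preceding proposition is the penalisation counterpart of Proposition \ref{nardine2}. First I would record that, applying Proposition \ref{nardine1} to $(F_t)_{t \geq 0}$ itself, the limit $F_\infty$ is $\mathcal{Q}$-integrable and $\mathbb{P}[F_t X_t] \to \mathcal{Q}[F_\infty]$; moreover the proof of that proposition shows $\mathbb{P}[F_t X_t] \leq \mathcal{Q}[H] < \infty$ for every $t$. Since $F_\infty$ is not $\mathcal{Q}$-almost everywhere zero we have $\mathcal{Q}[F_\infty] > 0$, so $\mathbb{P}[F_t X_t]$ is strictly positive for $t$ large and finite throughout, which justifies the definition of $\mathbb{Q}_t$ for $t$ large enough.

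Next I would fix $s \geq 0$ and $\Lambda_s \in \mathcal{F}_s$ and, for $t > s$, write $\mathbb{Q}_t[\Lambda_s] = \mathbb{P}[F_t \mathds{1}_{\Lambda_s} X_t]/\mathbb{P}[F_t X_t]$. The numerator is handled by running the convergence argument of Proposition \ref{nardine1} for the modified process $F_t \mathds{1}_{\Lambda_s}$: it is nonnegative, adapted and c\`adl\`ag for $t \geq s$, its $\mathcal{Q}$-almost sure limit is $F_\infty \mathds{1}_{\Lambda_s}$, and it still satisfies the domination $F_t \mathds{1}_{\Lambda_s} X_t \leq F_t X_t \leq M_t(H)$. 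Concretely, after adding $e^{-A_\infty}$ to $H$ so that $H > 0$ and $M_t(H) > 0$, one writes $\mathbb{P}[F_t \mathds{1}_{\Lambda_s} X_t] = \mathcal{M}^H[F_t \mathds{1}_{\Lambda_s} X_t / M_t(H)]$, notes that the integrand is bounded by $1$ and converges $\mathcal{Q}$-almost everywhere to $F_\infty \mathds{1}_{\Lambda_s}/H$, and concludes by dominated convergence that $\mathbb{P}[F_t \mathds{1}_{\Lambda_s} X_t] \to \mathcal{M}^H[F_\infty \mathds{1}_{\Lambda_s}/H] = \mathcal{Q}[F_\infty \mathds{1}_{\Lambda_s}]$. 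Equivalently, one could use the identity $\mathbb{P}[F_t \mathds{1}_{\Lambda_s} X_t] = \mathcal{Q}[F_t \mathds{1}_{\Lambda_s} \mathds{1}_{g \leq t}]$ (the defining property of $\mathcal{Q}$ extended from bounded to nonnegative $\mathcal{F}_t$-functionals by monotone convergence) together with $\mathds{1}_{g \leq t} \to 1$ $\mathcal{Q}$-almost everywhere, which is the route of the previous proposition.

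Combining the numerator and denominator limits gives $\mathbb{Q}_t[\Lambda_s] \to \mathcal{Q}[F_\infty \mathds{1}_{\Lambda_s}]/\mathcal{Q}[F_\infty]$, which is precisely $\mathbb{Q}_\infty[\Lambda_s]$ for $\mathbb{Q}_\infty = (F_\infty/\mathcal{Q}[F_\infty]).\mathcal{Q}$; that $\mathbb{Q}_\infty$ is a probability measure follows from $0 < \mathcal{Q}[F_\infty] < \infty$, already established in the first step.

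The step demanding the most care is the transfer of the domination hypothesis from $\mathbb{P}$ to $\mathcal{Q}$: the inequality $F_t X_t \leq M_t(H)$ is only assumed $\mathbb{P}$-almost surely, but the exceptional event is $\mathcal{F}_t$-measurable and $\mathbb{P}$-negligible, hence $\mathcal{Q}$-negligible, and a right-continuity argument over rational $t$ upgrades this to a simultaneous-in-$t$ bound $\mathcal{Q}$-almost everywhere, exactly as in the proof of Proposition \ref{nardine1}. Once this uniform $\mathcal{Q}$-almost everywhere domination is secured, the dominated convergence with respect to the finite measure $\mathcal{M}^H$ is routine, and inserting the indicator $\mathds{1}_{\Lambda_s}$ introduces no new difficulty.
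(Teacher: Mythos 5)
Your proposal is correct and follows essentially the same route as the paper: the paper likewise obtains finiteness from $\mathbb{P}[F_tX_t]\leq\mathbb{P}[M_t(H)]=\mathcal{Q}[H]$, gets the denominator limit from Proposition \ref{nardine1} applied to $(F_t)_{t\geq0}$, and gets the numerator limit by applying that same proposition to $(F_t\mathds{1}_{\Lambda_s}\mathds{1}_{t\geq s})_{t\geq0}$, exactly as you do (you merely unfold the cited argument rather than quoting it).
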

\begin{proof}
Since for all $t \geq 0$, $F_t X_t \leq M_t(H)$, $\mathbb{P}$-almost surely, one has
$$\mathbb{P} [F_tX_t] \leq \mathbb{P} [M_t(H)] = \mathcal{Q} [H] < \infty.$$
On the other hand, by Proposition \ref{nardine1}, 
\begin{equation}
\mathbb{P} [F_tX_t] \underset{t \rightarrow \infty}{\longrightarrow} \mathcal{Q}[F_{\infty}] \in (0,\infty), \label{chu2}
\end{equation}
which implies that $\mathbb{P} [F_tX_t]  >0$ for $t$ large enough. Moreover, by applying Proposition 
\ref{nardine1} to the family of functionals $(F_t \mathds{1}_{\Lambda_s} \mathds{1}_{t \geq s} )_{t \geq 0}$, 
one deduces:
$$\mathbb{P} [F_t \mathds{1}_{\Lambda_s} X_t] \underset{t \rightarrow \infty}{\longrightarrow} 
\mathcal{Q}[\mathds{1}_{\Lambda_s} F_{\infty}].$$
Combining this with (\ref{chu2}) completes the proof of the proposition.
\end{proof}
\begin{remark}
The results above give the behaviour of the quantity $\mathbb{P} [F_tX_t]$. In order to obtain penalisation
results which do not necessarily involve $X_t$, e.g. of the form $F_t=f(A_t)$, we need to find an equivalent for $\mathbb{P}[F_t]$. Unfortunately, we are not able to give such an 
estimate in the general case, however, if $(X_t)_{t \geq 0}$ is a diffusion satisfying some technical conditions, 
this problem is solved in the companion paper \cite{NN4}, and we deduce a penalisation theorem, generalizing 
 results given in \cite{NRY}. 
 \end{remark}
\section{A new decomposition of nonnegative supermartingales} \label{sec::4}
The following proposition gives a general decomposition of any nonnegative, c\`adl\`ag supermartingale, involving 
a uniformly martingale and a martingale of the form $(M_t(F))_{t \geq 0}$. This decomposition generalizes
a result obtained in \cite{NRY} (Theorem 1.2.5).
\begin{proposition} \label{decomposition}
Let us suppose that the assumptions of Theorem \ref{all} are satisfied, and that
 $A_{\infty} = \infty$, $\mathbb{P}$-almost surely. Let $Z$ be a nonnegative, c\`adl\`ag
 $\mathbb{P}$-supermartingale. 
We denote by $Z_{\infty}$ the $\mathbb{P}$-almost sure limit of $Z_t$ when $t$ goes to infinity. 
Then, $\mathcal{Q}$-almost everywhere, 
the quotient $Z_t/X_t$ is well-defined for $t$ large enough and converges, when 
$t$ goes to infinity, to a limit $z_{\infty}$, integrable
 with respect to $\mathcal{Q}$,  and $(Z_t)_{t \geq 0}$ decomposes
 as
$$\left(Z_t = M_t (z_{\infty}) + \mathbb{P} [Z_{\infty} |\mathcal{F}_t] + \xi_t \right)_{t \geq 0},$$
where $(\mathbb{P} [Z_{\infty} |\mathcal{F}_t])_{t \geq 0}$ denotes a c\`adl\`ag version of the 
conditional expectation of $Z_{\infty}$ with respect to $\mathcal{F}_t$, and $(\xi_t)_{t \geq 0}$ is 
a nonnegative, c\`adl\`ag $\mathbb{P}$-supermartingale, such that:
\begin{itemize}
\item $Z_{\infty} \in L^1_+ (\mathcal{F}, \mathbb{P})$, hence $\mathbb{P} [Z_{\infty} | \mathcal{F}_t]$ 
converges $\mathbb{P}$-almost surely and in $L^1 (\mathcal{F}, \mathbb{P})$ towards $Z_{\infty}$;
\item $\frac{\mathbb{P} [Z_{\infty} | \mathcal{F}_t] + \xi_t}{X_t} \underset{t \rightarrow \infty}
{\longrightarrow} 0$, $\mathcal{Q}$-almost everywhere;
\item $M_t(z_{\infty}) + \xi_t \underset{t \rightarrow \infty}{\longrightarrow} 0$, $\mathbb{P}$-almost surely.  
\end{itemize}
\noindent
Moreover, the decomposition is unique in the following sense: let $z'_{\infty}$ be a $\mathcal{Q}$-integrable, 
nonnegative functional, $Z'_{\infty}$ a $\mathbb{P}$-integrable, nonnegative random variable,
 $(\xi'_t)_{t \geq 0}$ a c\`adl\`ag, nonnegative $\mathbb{P}$-supermartingale, and let us suppose that
 for all $t\geq 0$,
$$Z_t = M_t (z'_{\infty}) + \mathbb{P} [Z'_{\infty} |\mathcal{F}_t] + \xi'_t.$$
Under these assumptions, if for $t$ going to infinity, $\xi'_t$ tends $\mathbb{P}$-almost surely to zero and
 $\xi'_t /X_t$ tends $\mathcal{Q}$-almost everywhere to zero, then $z'_{\infty} = z_{\infty}$, $\mathcal{Q}$-almost
everywhere, 
$Z'_{\infty} = Z_{\infty}$, $\mathbb{P}$-almost surely, and $\xi'$ is $\mathbb{P}$-indistinguishable with $\xi$. 
\end{proposition}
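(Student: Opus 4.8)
The plan is to transport the whole problem to the finite measure $\mathcal{M}^H := H.\mathcal{Q}$ with $H := e^{-A_\infty}$, exactly as in the proof of Proposition \ref{ntmq}. Recall from there that $H$ is $\mathcal{Q}$-integrable, that $M_t(H) = e^{-A_t}(1+X_t)$ never vanishes, that $\widetilde{\mathcal{M}}^H := \mathcal{M}^H/\mathcal{M}^H(1)$ is a probability measure under which $(\mathcal{F}_t)_{t\geq 0}$ still satisfies the natural conditions, and that $\mathcal{Q}$-almost everywhere $X_t \to \infty$ and $M_t(H)/X_t \to e^{-A_\infty} = H$. First I would check that $V_t := Z_t/M_t(H)$ is a nonnegative $\widetilde{\mathcal{M}}^H$-supermartingale: for $s\le t$ and nonnegative $\mathcal{F}_s$-measurable $\Gamma_s$ one has $\mathcal{M}^H[\Gamma_s V_t] = \mathbb{P}[\Gamma_s M_t(H) V_t] = \mathbb{P}[\Gamma_s Z_t] \le \mathbb{P}[\Gamma_s Z_s] = \mathcal{M}^H[\Gamma_s V_s]$, using that $\mathcal{M}^H$ restricted to $\mathcal{F}_t$ has $\mathbb{P}$-density $M_t(H)$ and that $Z$ is a $\mathbb{P}$-supermartingale. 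Hence $V_t$ converges $\widetilde{\mathcal{M}}^H$-almost everywhere, and therefore $\mathcal{Q}$-almost everywhere, to a limit $\ell_\infty$; Fatou bounds $\widetilde{\mathcal{M}}^H[\ell_\infty]\le \widetilde{\mathcal{M}}^H[V_0]$, so $\ell_\infty$ is $\mathcal{M}^H$-integrable. Writing $z_\infty := \ell_\infty H$ and using $M_t(H)/X_t \to H$ gives $Z_t/X_t = V_t\,(M_t(H)/X_t) \to z_\infty$ $\mathcal{Q}$-almost everywhere, with $\mathcal{Q}[z_\infty] = \mathcal{M}^H[\ell_\infty] < \infty$, so $z_\infty$ is $\mathcal{Q}$-integrable and $M_t(z_\infty)$ is defined by Proposition \ref{MtF}.

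Next I would produce the two pointwise inequalities that make $\xi$ nonnegative. The decisive point is $M_t(z_\infty) \le Z_t$ $\mathbb{P}$-a.s. I would obtain it at the level of $\mathcal{M}^H$ and transfer it back: from the supermartingale inequality $\widetilde{\mathcal{M}}^H[\ell_\infty|\mathcal{F}_t] \le V_t$ and the identity $M_t(z_\infty)/M_t(H) = \widetilde{\mathcal{M}}^H[z_\infty/H \,|\,\mathcal{F}_t] = \widetilde{\mathcal{M}}^H[\ell_\infty|\mathcal{F}_t]$ (from the proof of Proposition \ref{ntmq}), one gets, for every nonnegative $\mathcal{F}_t$-measurable $\Gamma_t$, $\mathbb{P}[\Gamma_t M_t(z_\infty)] = \mathcal{Q}[\Gamma_t z_\infty] = \mathcal{M}^H[\Gamma_t\,\widetilde{\mathcal{M}}^H[\ell_\infty|\mathcal{F}_t]] \le \mathcal{M}^H[\Gamma_t V_t] = \mathbb{P}[\Gamma_t Z_t]$, whence $M_t(z_\infty)\le Z_t$. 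Thus $W_t := Z_t - M_t(z_\infty)$ is a nonnegative $\mathbb{P}$-supermartingale; since $A_\infty=\infty$ $\mathbb{P}$-a.s., $M_t(z_\infty)\to 0$ $\mathbb{P}$-a.s. (by the proposition proved just before Proposition \ref{ntmq}), so $W_t \to Z_\infty$ $\mathbb{P}$-a.s., and $Z_\infty\in L^1_+$ because $Z$ is a nonnegative supermartingale. The nonnegative-supermartingale Fatou inequality then yields $W_t \ge \mathbb{P}[Z_\infty|\mathcal{F}_t]$, so I would set $\xi_t := W_t - \mathbb{P}[Z_\infty|\mathcal{F}_t]$, a nonnegative c\`adl\`ag $\mathbb{P}$-supermartingale, giving the announced decomposition. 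The three bullets are then immediate: the first is L\'evy's theorem applied to $Z_\infty\in L^1_+$; for the second, $\mathbb{P}[Z_\infty|\mathcal{F}_t]+\xi_t = W_t$ and $W_t/X_t = Z_t/X_t - M_t(z_\infty)/X_t \to z_\infty - z_\infty = 0$ $\mathcal{Q}$-a.e. (the second term by Proposition \ref{ntmq}); for the third, $M_t(z_\infty)+\xi_t = Z_t - \mathbb{P}[Z_\infty|\mathcal{F}_t] \to Z_\infty - Z_\infty = 0$ $\mathbb{P}$-a.s.

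For uniqueness I would read off $Z'_\infty$ and $z'_\infty$ by taking the two kinds of limits. Along $\mathbb{P}$-a.s. limits, $M_t(z'_\infty)\to 0$, $\mathbb{P}[Z'_\infty|\mathcal{F}_t]\to Z'_\infty$, $\xi'_t\to 0$, so $Z_\infty = Z'_\infty$ $\mathbb{P}$-a.s. Dividing by $X_t$ and taking $\mathcal{Q}$-a.e. limits, $M_t(z'_\infty)/X_t\to z'_\infty$ (Proposition \ref{ntmq}) and $\xi'_t/X_t\to 0$ (hypothesis), so everything reduces to the auxiliary fact that $\mathbb{P}[Z'_\infty|\mathcal{F}_t]/X_t \to 0$ $\mathcal{Q}$-a.e. for a nonnegative uniformly integrable martingale $P_t := \mathbb{P}[P_\infty|\mathcal{F}_t]$. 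Granting it, $z_\infty = z'_\infty$ $\mathcal{Q}$-a.e., and then $\xi'_t = Z_t - M_t(z'_\infty) - \mathbb{P}[Z'_\infty|\mathcal{F}_t]$ is indistinguishable from $\xi_t$. To prove the auxiliary fact I would again use $\mathcal{M}^H$: $P_t/M_t(H)$ is a nonnegative $\widetilde{\mathcal{M}}^H$-martingale (a true martingale, since $P$ is), hence converges $\mathcal{Q}$-a.e. to some $p_\infty$, and $P_t/X_t \to p_\infty H$. Truncating $P_\infty = P_\infty\wedge n + (P_\infty-n)_+$, the bounded part contributes $0$ because $M_t(H)=e^{-A_t}(1+X_t)\to\infty$ $\mathcal{Q}$-a.e., while for the tail part $R^{(n)}_t := \mathbb{P}[(P_\infty-n)_+|\mathcal{F}_t]$ Fatou gives $\mathcal{M}^H[\lim_t R^{(n)}_t/M_t(H)] \le \mathbb{P}[(P_\infty-n)_+]\to 0$; since $p_\infty$ equals this tail limit for every $n$, $\mathcal{M}^H[p_\infty]=0$, i.e. $p_\infty=0$ $\mathcal{Q}$-a.e.

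The main obstacle is precisely the $\mathbb{P}$-almost sure inequality $M_t(z_\infty)\le Z_t$ (equivalently the nonnegativity of $\xi$): the limit $z_\infty$ is detected only $\mathcal{Q}$-a.e., and $\mathcal{Q}$ is singular with respect to $\mathbb{P}$ when $A_\infty=\infty$, so one cannot argue directly under $\mathbb{P}$. The resolution is to work under the intermediate finite measure $\mathcal{M}^H$, which is mutually absolutely continuous with $\mathcal{Q}$ on the $\mathcal{Q}$-full set $\{A_\infty<\infty\}$ where $H>0$, and locally absolutely continuous with respect to $\mathbb{P}$ with density $M_t(H)$; the supermartingale inequality, valid $\mathcal{M}^H$-a.e., then transfers to a genuine $\mathbb{P}$-a.s. inequality through this local density. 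The secondary technical point is the uniform-integrability lemma, where the general $L^1$ case (not merely the bounded one) is handled by the truncation-plus-Fatou estimate above.
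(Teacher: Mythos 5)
Your proof is correct and follows essentially the same route as the paper: the same auxiliary finite measure $\mathcal{M}^H$ with $H=e^{-A_\infty}$, the same $\widetilde{\mathcal{M}}^H$-supermartingale $Z_t/M_t(H)$ to produce $z_\infty$, the same key inequality $M_t(z_\infty)\le Z_t$, and the same two-stage subtraction to define $\xi$. The only (harmless) divergences are in two sub-steps: you obtain $M_t(z_\infty)\le Z_t$ from the conditional Fatou inequality for $Z_t/M_t(H)$ under $\widetilde{\mathcal{M}}^H$ combined with the identity $M_t(F)/M_t(H)=\widetilde{\mathcal{M}}^H[F/H\,|\,\mathcal{F}_t]$, whereas the paper applies Fatou directly under $\mathcal{Q}$ to $\Gamma_t\,(Z_u/X_u)\,\mathds{1}_{g\le u}$; and you prove the auxiliary fact that $\mathbb{P}[Z'_\infty|\mathcal{F}_t]/X_t\to 0$ $\mathcal{Q}$-almost everywhere by truncation plus Fatou, whereas the paper deduces it from the observation that a uniformly integrable martingale of the form $(M_t(F))_{t\ge 0}$ tending $\mathbb{P}$-almost surely to zero must vanish identically --- both variants are valid.
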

\begin{proof}
Let $H:= e^{-A_{\infty}}$. Since $Z$ is a c\`adl\`ag $\mathbb{P}$-supermartingale, it is 
easy to deduce that 
$$\left( \frac{Z_t}{M_t(H)} \right)_{t \geq 0}$$
is a c\`adl\`ag supermartingale with respect to $\widetilde{\mathcal{M}}^H := \mathcal{M}^H/ \mathcal{M}^H (1)$. Hence, 
it converges $\widetilde{\mathcal{M}}^H$-almost surely to a limit $\zeta$. Since $M_t(H)/X_t$ 
 converges $\widetilde{\mathcal{M}}^H$-a.s. to $H$, $Z_t/X_t$  converges $\widetilde{\mathcal{M}}^H$-a.s., 
and then $\mathcal{Q}$-almost everywhere, to $z_{\infty} = \zeta \, H$. 
Moreover, since $\zeta$ is the $\widetilde{\mathcal{M}}^H$-a.s. limit of the 
$\widetilde{\mathcal{M}}^H$-supermartingale $(Z_t/ M_t(H))_{t \geq 0}$, one has:
$$\mathcal{Q} [z_{\infty}] = \mathcal{M}^H [\zeta] \leq \mathcal{M}^H [Z_0/M_0(H)] < \infty.$$
Since $z_{\infty}$ is $\mathcal{Q}$-integrable, $(M_t(z_{\infty}))_{t \geq 0}$ is well-defined. 
Now, for all nonnegative, $\mathcal{F}_t$-measurable functionals $\Gamma_t$:
\begin{align*}
\mathcal{Q} [\Gamma_t z_{\infty}] & = \mathcal{Q} \left[ \Gamma_t \, \underset{u \rightarrow \infty}
{\lim} \, \frac{Z_u}{X_u}  \right] \\ & = \mathcal{Q} \left[ \Gamma_t \, \underset{u \rightarrow \infty}
{\lim} \, \frac{Z_u}{X_u}  \, \mathds{1}_{g \leq u} \right] \\ &
\leq \underset{u \rightarrow \infty}{\lim \inf} \, \mathcal{Q} \left[\Gamma_t \, \frac{Z_u}{X_u} 
\, \mathds{1}_{g \leq u} \right] \\ & = \underset{u \rightarrow \infty}{\lim \inf} 
\, \mathbb{P} \left[ \Gamma_t \, \frac{Z_u}{X_u} \, X_u \right] \\ & \leq 
\underset{u \rightarrow \infty}{\lim \inf} \, \mathbb{P} [\Gamma_t Z_u] \leq \mathbb{P} [\Gamma_t Z_t].
\end{align*}
\noindent
One deduces that for all $t \geq 0$, $M_t(z_{\infty}) \leq Z_t$, $\mathbb{P}$-a.s., which implies 
that $ \left(M_t(z_{\infty}) \wedge Z_t \right)_{t \geq 0}$ is a c\`adl\`ag and adapted modification of 
$(M_t(z_{\infty}))_{t \geq 0}$. Since $(M_t(z_{\infty}))_{t \geq 0}$ is only defined up to c\`adl\`ag
modifications (which are indistinguishable from each other), one can replace $(M_t(z_{\infty}))_{t \geq 0}$ 
by $ \left(M_t(z_{\infty}) \wedge Z_t \right)_{t \geq 0}$, and then suppose that for 
all $t \geq 0$, $M_t(z_{\infty}) \leq Z_t$ everywhere. 
Note that if $(Z_t)_{t \geq 0}$ is supposed to be uniformly integrable, $(M_t(z_{\infty}))_{t \geq 0}$
is also uniformly integrable, and since it tends $\mathbb{P}$-almost surely to zero, it is 
$\mathbb{P}$-almost surely identically zero.
This implies that $z_{\infty} = 0$, $\mathcal{Q}$-almost everywhere. 
Now, going back to the general case, let us define, for all $t \geq 0$:
$$\widetilde{Z}_t := Z_t - M_t(z_{\infty}).$$
Since $(M_t(z_{\infty}))_{t \geq 0}$ is a c\`adl\`ag $\mathbb{P}$-martingale, the process
$(\widetilde{Z}_t)_{t \geq 0}$ is a c\`adl\`ag, nonnegative $\mathbb{P}$-supermartingale.
Moreover, $M_t(z_{\infty})$ tends $\mathbb{P}$-almost surely to zero when $t$ goes to infinity, hence:
$$\widetilde{Z}_t \underset{t \rightarrow \infty}{\longrightarrow} Z_{\infty},$$
$\mathbb{P}$-almost surely. Now, since $(\widetilde{Z}_t)_{t \geq 0}$ is a nonnegative supermartingale 
and $Z_{\infty} \geq 0$, $\mathbb{P}$-almost surely, we 
obtain, for all $t \geq 0$:
\begin{equation}
0 \leq \mathbb{P} [Z_{\infty} | \mathcal{F}_t] \leq \widetilde{Z}_t, \label{nnnn}
\end{equation}
$\mathbb{P}$-almost surely. Hence, $\left( \left(\mathbb{P} [Z_{\infty} | \mathcal{F}_t] \right)_+ \wedge \widetilde{Z}_t 
\right)_{t \geq 0}$ is a c\`adl\`ag version of $(\mathbb{P} [Z_{\infty} | \mathcal{F}_t])_{t \geq 0}$ 
and one can suppose that (\ref{nnnn}) holds everywhere.
Now, let us write, for all $t \geq 0$:
$$\xi_t := \widetilde{Z}_t - \mathbb{P} [Z_{\infty} |\mathcal{F}_t].$$ 
This is nonnegative, c\`adl\`ag supermartingale
 tending $\mathbb{P}$-a.s. to zero when $t$ goes to infinity.
On the other hand, $\mathcal{Q}$-almost everywhere:
$$\underset{t \rightarrow \infty}{\lim} \, \frac{\xi_t}{X_t} = \underset{t \rightarrow \infty}{\lim} 
\, \frac{\widetilde{Z}_t}{X_t} = z_{\infty} - z_{\infty} = 0.$$
Here, the first equality is due to the fact that  
$(\mathbb{P} [Z_{\infty} |\mathcal{F}_t]/X_t)_{t \geq 0}$ tends to zero $\mathcal{Q}$-almost everywhere, by
 the remark made above on the case where $(Z_t)_{t \geq 0}$ is uniformly integrable. 
The uniqueness of the decomposition is very easy to check: since $M_t(z'_{\infty})$ and $\xi'_t$ tend
$\mathbb{P}$-almost surely to zero when $t \rightarrow \infty$, $Z_t$ tends $\mathbb{P}$-almost surely 
to $Z'_{\infty}$ and then $Z'_{\infty} = Z_{\infty}$. Similarly, since $\xi'_t /X_t$ and any 
c\`adl\`ag version of $\mathbb{P} [ Z'_{\infty} |\mathcal{F}_t] /X_t$ tend to 
zero, $\mathcal{Q}$-almost everywhere, 
$Z_t/X_t$ tends to $z'_{\infty}$, which is $\mathcal{Q}$-almost everywhere equal to 
$z_{\infty}$. One now deduces that for all $t \geq 0$, $\xi'_t = \xi_t$, $\mathbb{P}$-almost surely, and since
$\xi$ and $\xi'$ are c\`adl\`ag, they are indistinguishable, which proves the uniqueness of the decomposition. 
\end{proof}
As in \cite{NRY}, we can deduce, from Proposition \ref{decomposition}, the following characterization 
of the martingales of the form $(M_t(F))_{t \geq 0}$:
\begin{corollary}
Let us suppose that the assumptions of Theorem \ref{all} are satisfied, and that
 $A_{\infty} = \infty$, $\mathbb{P}$-almost surely. Then, a c\`adl\`ag, nonnegative $\mathbb{P}$-martingale
$(Z_t)_{t\geq 0}$ has the form $(M_t(F))_{t \geq 0}$ for a nonnegative, $\mathcal{Q}$-integrable
 functional
$F$, if and only if:
\begin{equation}
\mathbb{P}[Z_0] = \mathcal{Q} \left( \underset{t \rightarrow \infty}{\lim} \, \frac{Z_t}{X_t} \right). \label{pz0}
\end{equation}
Note that, by Proposition \ref{decomposition}, the limit above necessarily exists $\mathcal{Q}$-almost 
everywhere.
\end{corollary}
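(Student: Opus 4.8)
The plan is to prove the two implications separately, using Proposition \ref{ntmq} for the asymptotics of $M_t(F)/X_t$ and Proposition \ref{decomposition} for the converse, the whole argument hinging on evaluating the defining relation of $M_t(F)$ at $t=0$.

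For the direct implication, suppose $Z_t = M_t(F)$ with $F$ nonnegative and $\mathcal{Q}$-integrable. First I would recall that by Proposition \ref{ntmq} (which applies since $A_\infty = \infty$, $\mathbb{P}$-almost surely), $M_t(F)/X_t \to F$ $\mathcal{Q}$-almost everywhere, so $\lim_{t \to \infty} Z_t/X_t = F$ exists $\mathcal{Q}$-a.e. Then, applying the defining identity $\mathcal{M}^F[\Gamma_t] = \mathbb{P}[\Gamma_t M_t(F)]$ of Proposition \ref{MtF} with $t=0$ and $\Gamma_0 \equiv 1$, and using $\mathcal{M}^F = F.\mathcal{Q}$, I obtain $\mathbb{P}[Z_0] = \mathbb{P}[M_0(F)] = \mathcal{M}^F[1] = \mathcal{Q}[F]$. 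Combining the two facts yields $\mathbb{P}[Z_0] = \mathcal{Q}\left( \lim_{t\to\infty} Z_t/X_t \right)$, which is (\ref{pz0}).

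For the converse, suppose $(Z_t)_{t \geq 0}$ is a nonnegative c\`adl\`ag $\mathbb{P}$-martingale satisfying (\ref{pz0}). Since a martingale is in particular a nonnegative c\`adl\`ag supermartingale, Proposition \ref{decomposition} applies and provides $z_\infty := \lim_{t\to\infty} Z_t/X_t$ (which is $\mathcal{Q}$-integrable) together with the decomposition $Z_t = M_t(z_\infty) + \mathbb{P}[Z_\infty \mid \mathcal{F}_t] + \xi_t$, where $Z_\infty \in L^1_+(\mathcal{F},\mathbb{P})$ and $(\xi_t)_{t \geq 0}$ is a nonnegative c\`adl\`ag supermartingale tending $\mathbb{P}$-a.s. to zero. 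The idea is to show that the last two terms vanish identically. To this end I would take the $\mathbb{P}$-expectation of the decomposition at $t=0$: exactly as above $\mathbb{P}[M_0(z_\infty)] = \mathcal{Q}[z_\infty]$, while $\mathbb{P}\big[\mathbb{P}[Z_\infty \mid \mathcal{F}_0]\big] = \mathbb{P}[Z_\infty]$ by the tower property, so that
$$\mathbb{P}[Z_0] = \mathcal{Q}[z_\infty] + \mathbb{P}[Z_\infty] + \mathbb{P}[\xi_0].$$
But hypothesis (\ref{pz0}) reads precisely $\mathbb{P}[Z_0] = \mathcal{Q}[z_\infty]$, so the two remaining nonnegative terms must both be zero: $\mathbb{P}[Z_\infty] = 0$ and $\mathbb{P}[\xi_0] = 0$.

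From $\mathbb{P}[Z_\infty] = 0$ and $Z_\infty \geq 0$ I conclude $Z_\infty = 0$ $\mathbb{P}$-a.s., hence $\mathbb{P}[Z_\infty \mid \mathcal{F}_t] = 0$ for all $t$. From $\mathbb{P}[\xi_0] = 0$, the supermartingale inequality $\mathbb{P}[\xi_t] \leq \mathbb{P}[\xi_0] = 0$ together with $\xi_t \geq 0$ forces $\xi_t = 0$ $\mathbb{P}$-a.s. for every $t$, and since $\xi$ is c\`adl\`ag it is indistinguishable from zero. Therefore $Z_t = M_t(z_\infty)$ with $F := z_\infty$ nonnegative and $\mathcal{Q}$-integrable, as required. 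I expect the only real content of the argument to be the identity $\mathbb{P}[M_0(F)] = \mathcal{Q}[F]$ and the rigidity it produces once combined with the \emph{exact} equality in (\ref{pz0}); the supermartingale bookkeeping that kills $\xi$ and $\mathbb{P}[Z_\infty \mid \mathcal{F}_t]$ is then routine. The one point to watch is the justification that $Z_\infty$ and $\xi_0$ are integrable so that the expectations above are legitimate, which is guaranteed by the $L^1$ statements in Proposition \ref{decomposition}.
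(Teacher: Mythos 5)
Your proof is correct and follows essentially the same route as the paper: both arguments rest on taking expectations at $t=0$ in the decomposition $Z_t = M_t(z_\infty) + \mathbb{P}[Z_\infty\,|\,\mathcal{F}_t] + \xi_t$ of Proposition \ref{decomposition}, together with the identity $\mathbb{P}[M_0(F)]=\mathcal{Q}[F]$, so that the exact equality \eqref{pz0} forces the two nonnegative remainder terms to vanish. The only (harmless) difference is in the forward direction, where you invoke Proposition \ref{ntmq} to identify $\lim_{t\to\infty} Z_t/X_t$ with $F$ directly, while the paper appeals to the uniqueness clause of Proposition \ref{decomposition}; both yield $\mathbb{P}[Z_0]=\mathcal{Q}[F]=\mathcal{Q}[z_\infty]$.
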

\begin{proof}
By Proposition \ref{decomposition}, one can write the decomposition:
$$Z_t = M_t(z_{\infty}) + \mathbb{P} [Z_{\infty} | \mathcal{F}_t] + \xi_t.$$
Note that in this situation, $(\xi_t)_{t \geq 0}$ is a nonnegative martingale. One has:
$$\mathbb{P}[Z_0] = \mathbb{P} [M_0 (z_{\infty})] + \mathbb{P} [ \, \mathbb{P}[ Z_{\infty} | \mathcal{F}_0] \, ]
+ \mathbb{P} [\xi_0] = \mathcal{Q} [z_{\infty}] + \mathbb{P} [Z_{\infty}] + \mathbb{P} [\xi_0].$$
Now, the equation \eqref{pz0} is satisfied iff
$$\mathbb{P} [Z_0] = \mathcal{Q} [z_{\infty}].$$
If this condition holds, one has 
$$\mathbb{P} [Z_{\infty}] = \mathbb{P} [\xi_0] = 0,$$
and then, for all $t \geq 0$, 
 $$\mathbb{P} [Z_{\infty} | \mathcal{F}_t] + \xi_t = 0$$ 
almost surely. Hence, the martingale $(Z_t)_{t \geq 0}$ is a c\`adl\`ag modification 
of $(M_t(z_{\infty}))_{t \geq 0}$. Since $(M_t(z_{\infty}))_{t \geq 0}$ is only defined up to 
c\`adl\`ag modification, one can suppose that $(Z_t)_{t \geq 0}$ coincides with $(M_t(z_{\infty}))_{t \geq 0}$. 
On the other hand, if $(Z_t)_{t \geq 0}$ has the form $(M_t(F))_{t \geq 0}$, by uniqueness of the decomposition
given in Proposition \ref{decomposition}, $F = z_{\infty}$, $\mathcal{Q}$-almost everywhere, which implies 
that $\mathbb{P}[Z_0]= \mathcal{Q}[z_{\infty}]$, and then \eqref{pz0} is satisfied. 
\end{proof}
 \begin{remark}
Let us suppose that, in Proposition \ref{decomposition}, $(Z_t)_{t \geq 0}$ is a nonnegative martingale.
Since the space satisfies the property (NP), there exists a unique finite measure $\mathbb{Q}_Z$ 
on $(\Omega, \mathcal{F})$, such that for all $t \geq 0$, its restriction to $\mathcal{F}_t$ has 
density $Z_t$ with respect to $\mathbb{P}$. If one writes the decomposition
$$Z_t = M_t(z_{\infty}) + \mathbb{P} [Z_{\infty} | \mathcal{F}_t] + \xi_t,$$
one deduces:
$$\mathbb{Q}_Z = z_{\infty} \,. \mathcal{Q} + Z_{\infty} \, . \mathbb{P} + \mathbb{Q}_{\xi},$$
where the restriction of $\mathbb{Q}_{\xi}$ to $\mathcal{F}_t$ has density $\xi_t$ with respect to 
$\mathbb{P}$. By Radon-Nykodym theorem, one has a decomposition:
$$\mathbb{Q}_{\xi} = \xi'. \mathbb{P} + \mathbb{Q}'_{\xi},$$
where $\mathbb{Q}'_{\xi}$ is singular with respect to $\mathbb{P}$. Now, if for $t \geq 0$, $\xi'_t$
is the density, with respect to $\mathbb{P}$, of the restriction of $\xi'.\mathbb{P}$ to $\mathcal{F}_t$,
then for all $t \geq 0$, $\xi'_t \leq \xi_t$, $\mathbb{P}$-almost surely. Moreover, if $(\xi'_t)_{t \geq 0}$ is
supposed to be c\`adl\`ag, then almost surely, $\xi'_t \leq \xi_t$ for all $t \geq 0$. By taking the 
$\mathbb{P}$-almost sure limit for $t$ going to infinity, one deduces that $\xi' = 0$, $\mathbb{P}$-almost
surely, therefore, 
$\mathbb{Q}_{\xi} = \mathbb{Q}'_{\xi}$ is singular with respect to $\mathbb{P}$. 
One can also decompose $\mathbb{Q}_{\xi}$ as:
$$\mathbb{Q}_{\xi} = \xi''. \mathcal{Q} + \mathbb{Q}''_{\xi},$$
where $\mathbb{Q}''_{\xi}$ is singular with respect to $\mathcal{Q}$. Now, for all $t \geq 0$, one has, 
$\mathbb{P}$-almost surely, and then $\mathcal{Q}$-almost everywhere, 
$M_t(\xi'') \leq \xi_t$. Since $(M_t(\xi''))_{t \geq 0}$ and $(\xi_t)_{t \geq 0}$ are right-continuous, 
one deduces that $\mathcal{Q}$-almost everywhere, $M_t(\xi'') \leq \xi_t$ for all $t \geq 0$. 
Since $\mathcal{Q}$-almost everywhere, $M_t(\xi'')/X_t$ tends to $\xi''$ when $t$ goes to infinity, and 
$\xi_t/X_t$ tends to zero, one has $\xi'' = 0$, $\mathcal{Q}$-almost everywhere and $\mathbb{Q}_{\xi} = \mathbb{Q}''_{\xi}$ is singular with
respect to $t$. Hence, we have obtained a decomposition of $\mathbb{Q}_Z$ into three parts:
\begin{itemize}
\item A part which is absolutely continuous with respect to $\mathbb{P}$.
\item A part which is absolutely continuous with respect to $\mathcal{Q}$. 
\item A part which is singular with respect to $\mathbb{P}$ and $\mathcal{Q}$.
\end{itemize}
\noindent
This decomposition is unique, as a consequence of uniqueness of Radon-Nykodym decomposition (recall that
 $\mathbb{P}$ and $\mathcal{Q}$ are mutually singular, since $A_{\infty} = \infty$, $\mathbb{P}$-almost
 surely, and $A_{\infty} < \infty$, $\mathcal{Q}$-almost everywhere). 
 This uniqueness can be compared with the uniqueness of the decomposition of the martingale 
$(Z_t)_{t \geq 0}$ given in Proposition \ref{decomposition}.
\end{remark}

\end{document}